\newtheorem{theorem}{Theorem}[section]
\newtheorem{lemma}[theorem]{Lemma}
\newtheorem{cor}[theorem]{Corollary}
\newtheorem{conj}[theorem]{Conjecture}
\newtheorem{defn}[theorem]{Definition}
\newtheorem{claim}{Claim}
\theoremstyle{definition}
\newcommand\ex{\ensuremath{\mathrm{ex}}}
\newcommand{\vphi}{\varphi}
\newcommand{\cA}{\mathcal{A}}
\newcommand{\cB}{\mathcal{B}}
\newcommand{\cC}{\mathcal{C}}
\newcommand{\cD}{\mathcal{D}}
\newcommand{\cF}{\mathcal{F}}
\newcommand{\cG}{\mathcal{G}}
\newcommand{\cH}{\mathcal{H}}
\newcommand{\cI}{\mathcal{I}}
\newcommand{\cM}{\mathcal{M}}
\newcommand{\cV}{\mathcal{V}}
\newcommand{\fC}{\mathfrak{C}}
\newcommand{\card}[1]{\left| #1 \right|}
\newcommand{\floor}[1]{\left \lfloor #1 \right \rfloor}
\newcommand{\ceil}[1]{\left \lceil #1 \right \rceil}
\newcommand{\MAX}{\mathrm{MAX}}
\newcommand{\OPT}{\mathrm{OPT}}
\newcommand{\OBJ}{\mathrm{OBJ}}
\newcommand{\maxfam}{N_0}
\newcommand{\hmsize}{N_1}
\newcommand{\twomax}{N_2}
\newcommand{\nummax}{M}
\renewcommand{\lg}{\log_2}
\renewcommand{\t}[1]{\tilde{#1}}
\newcommand{\Fq}{\mathbb{F}_q}
\newcommand{\Fqn}{\mathbb{F}_q^n}
\newcommand{\qbinom}[2]{{#1 \brack #2}_q}
\title{Colourings without monochromatic disjoint pairs}
\author{
Dennis Clemens
\thanks{Technische Universit\"at Hamburg, Institut f\"ur Mathematik, Am Schwarzenberg-Campus 3, 21073 Hamburg, Germany. E-mail: {\tt dennis.clemens@tuhh.de}.}
\and
Shagnik Das
\thanks{Freie Universit\"at Berlin, Institut f\"ur Mathematik, Arnimallee 3, 14195 Berlin, Germany. E-mail: {\tt shagnik@mi.fu-berlin.de}.}
\and
Tuan Tran
\thanks{Department of Mathematics, ETH, 8092 Z\"urich. E-mail: {\tt manh.tran@math.ethz.ch}. Supported by the Czech Science Foundation, grant number GJ16-07822Y, and with institutional support RVO:67985807.}
}
\begin{document}

\maketitle


\begin{abstract}
The typical extremal problem asks how large a structure can be without containing a forbidden substructure.  The Erd\H{o}s--Rothschild problem, introduced in 1974 by Erd\H{o}s and Rothschild in the context of extremal graph theory, is a coloured extension, asking for the maximum number of colourings a structure can have that avoid monochromatic copies of the forbidden substructure.

The celebrated Erd\H{o}s--Ko--Rado theorem is a fundamental result in extremal set theory, bounding the size of set families without a pair of disjoint sets, and has since been extended to several other discrete settings.  The Erd\H{o}s--Rothschild extensions of these theorems have also been studied in recent years, most notably by Hoppen, Koyakayawa and Lefmann for set families, and Hoppen, Lefmann and Odermann for vector spaces.

In this paper we present a unified approach to the Erd\H{o}s--Rothschild problem for intersecting structures, which allows us to extend the previous results, often with sharp bounds on the size of the ground set in terms of the other parameters.  In many cases we also characterise which families of vector spaces asymptotically maximise the number of Erd\H{o}s--Rothschild colourings, thus addressing a conjecture of Hoppen, Lefmann and Odermann.
\end{abstract}

\section{Introduction} \label{sec:intro}

The typical extremal question asks how large a structure can be without containing forbidden substructures.  A fundamental example is Mantel's theorem~\cite{M07}, which shows the maximum number of edges in an $n$-vertex triangle-free graph, $\ex(n,K_3)$, is attained by a balanced complete bipartite graph.  Once the basic extremal question has been answered, one may seek to strengthen the result.  This often takes the form of determining the extremal structures, proving stability results below the extremal threshold, or obtaining supersaturation results above the extremal threshold.

Erd\H{o}s and Rothschild~\cite{E74} posed a different extension of Mantel's theorem, asking for the maximum number of two-edge-colourings without monochromatic triangles that an $n$-vertex graph $G$ can have.  If $G$ itself is triangle-free, then certainly any edge-colouring of $G$ will not have any monochromatic triangles, and so one immediately obtains a lower bound of $2^{\ex(n,K_3)}$.  Erd\H{o}s and Rothschild conjectured that this lower bound was tight, and this was confirmed decades later by Yuster~\cite{Y96}.  Alon, Balogh, Keevash and Sudakov~\cite{ABKS04} extended this result to larger cliques.  Tur\'an's theorem~\cite{T41} shows the largest $K_{k+1}$-free $n$-vertex graph is the Tur\'an graph $T_{k}(n)$, and Alon et al. showed $T_{k}(n)$ also maximises the number of two- and three-edge-colourings without a monochromatic $K_{k+1}$.  However, they further showed that this was not the case for four or more colours, with Pikhurko and Yilma~\cite{PY12} later providing some exact results in this setting.  Pikhurko, Staden and Yilma~\cite{PSY16} have recently introduced an asymmetric version of this problem, reducing its asymptotic solution to a large but finite optimisation problem.

While the original Erd\H{o}s--Rothschild problem was phrased in the context of Mantel's theorem, it can be asked for any extremal question: how many $r$-colourings can a structure have without a monochromatic copy of some forbidden substructure?  For instance, Lefmann, Person, R\"odl and Schacht~\cite{LPRS09} studied colourings of $3$-uniform hypergraphs without monochromatic copies of the Fano plane, and Lefmann, Person and Schacht~\cite{LPS10} obtained asymptotic results for arbitrary $k$-uniform hypergraphs.  For more recent results in this line of investigation, see~\cite{HKL12b},~\cite{HKL14} and~\cite{LP13}.

In this paper, we study the Erd\H{o}s--Rothschild problem for intersecting families of sets, vector spaces and permutations, extending the previous results in this direction.  In the following subsections we will review what is already known before presenting our new theorems.

\subsection{Erd\H{o}s--Rothschild for intersecting families}

A $k$-uniform family of sets $\cF \subseteq \binom{[n]}{k}$ is said to be \emph{$t$-intersecting} if $\card{F \cap F'} \ge t$ for any pair of sets $F, F' \in \cF$.  The natural extremal question is to ask how large a $t$-intersecting family can be.  An obvious construction, called the \emph{$t$-star with centre $X$}, is to take all $k$-sets containing some fixed $t$-element subset $X \subseteq [n]$.  Such a family has $\binom{n-t}{k-t}$ sets, and is said to be \emph{trivially} intersecting.  When $t=1$, we simplify notation by simply calling $1$-intersecting families \emph{intersecting}, and dropping the set notation for the centre $x \in [n]$ of \emph{stars}.

The celebrated Erd\H{o}s--Ko--Rado theorem~\cite{EKR61} of 1961 shows that, when $n$ is suitably large in terms of $k$ and $t$, the $t$-stars are the largest $t$-intersecting families in $\binom{[n]}{k}$.  Note that some bound on $n$ is required; for example, if $n \le 2k-t$, $\binom{[n]}{k}$ itself is $t$-intersecting.  Erd\H{o}s, Ko and Rado showed that $2k$ was the correct threshold for stars to become extremal when $t=1$, and the correct threshold for larger $t$ was eventually determined through the efforts of Frankl~\cite{F78} and Wilson~\cite{W84}.

\begin{theorem} \label{thm:ekr}
Suppose $k, t \in \mathbb{N}$ and $n \ge (t+1)(k-t + 1)$.  If $\cF \subseteq \binom{[n]}{k}$ is $t$-intersecting, then $\card{\cF} \le \binom{n-t}{k-t}$.  Moreover, if $n > (t+1)(k-t+1)$, the only $t$-intersecting families of maximum size are the $t$-stars.
\end{theorem}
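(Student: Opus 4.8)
The plan is to prove the bound by the method of \emph{shifts} (compressions), which replaces an arbitrary $t$-intersecting family by a highly structured one of the same size, followed by an induction that splits this structured family according to its largest element. As motivation for the case $t=1$, Katona's cyclic-permutation argument already gives a clean proof: placing $[n]$ on a cycle, one checks that an intersecting family contains at most $k$ of the $n$ cyclic arcs of length $k$, and double-counting incidences between the sets of $\cF$ and the cyclic orders in which they appear as arcs yields $\card{\cF} \le \binom{n-1}{k-1}$. For general $t$ this counting becomes unwieldy, so I would instead develop the shifting machinery, which is robust enough to handle all $t$ simultaneously.

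For $1 \le i < j \le n$ define the $(i,j)$-shift $S_{ij}$ on a family: for each $F$ with $j \in F$ and $i \notin F$, replace $F$ by $(F \setminus \{j\}) \cup \{i\}$ whenever that set is not already present, and leave $F$ unchanged otherwise. The first step is to establish the two defining properties of shifts. That $\card{S_{ij}(\cF)} = \card{\cF}$ is immediate, since $S_{ij}$ acts injectively. That $S_{ij}(\cF)$ remains $t$-intersecting requires a short case analysis on a pair of sets whose intersection might shrink; the only delicate case is when one set is shifted while its partner's shift is blocked by a set already present, and there one uses precisely this blocking set to recover that the two sets had at least $t+1$ common elements to begin with. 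Iterating over all pairs $(i,j)$ until no shift alters the family, I obtain a \emph{left-compressed} $t$-intersecting family $\cG$ with $\card{\cG} = \card{\cF}$.

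The heart of the argument is to bound $\card{\cG}$ by induction on $k$ and $n$, splitting according to whether the largest element $n$ lies in a set. Write $\cG_0 = \{ G \in \cG : n \notin G \}$, viewed in $\binom{[n-1]}{k}$, and $\cG_1 = \{ G \setminus \{n\} : G \in \cG,\ n \in G \}$, viewed in $\binom{[n-1]}{k-1}$, so that $\card{\cG} = \card{\cG_0} + \card{\cG_1}$. Clearly $\cG_0$ is $t$-intersecting. The gain from left-compression is that $\cG_1$ is also $t$-intersecting, although a priori it is only $(t-1)$-intersecting: if two sets of $\cG_1$ met in merely $t-1$ elements, then, since the threshold guarantees a free element $m < n$ outside their union, one could shift $m$ into a preimage and produce a pair of sets of $\cG$ meeting in fewer than $t$ elements. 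Applying the induction hypothesis to each link and summing via $\binom{n-1-t}{k-t} + \binom{n-1-t}{k-1-t} = \binom{n-t}{k-t}$ then gives $\card{\cG} \le \binom{n-t}{k-t}$, with the base cases being the regime $n \le 2k-t$, where $\binom{[n]}{k}$ is itself $t$-intersecting and the claim is checked directly.

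The main obstacle is obtaining the \emph{sharp} threshold. Recursing on $\cG_1$ lowers $k$ while keeping the hypothesis $n-1 \ge (t+1)(k-t)$ valid, but recursing on $\cG_0$ keeps $k$ fixed and lowers the ground set to $[n-1]$; at the boundary $n = (t+1)(k-t+1)$ this falls just below the threshold, so the naive induction fails exactly where the theorem is tightest. Overcoming this is the essential difficulty and is the content of the Frankl--Wilson refinement: one either treats the boundary regime $2k-t < n \le (t+1)(k-t+1)$ by a separate structural estimate on how large a non-trivial $t$-intersecting family can be, or, following Wilson, bypasses the induction entirely with a linear-algebraic eigenvalue bound on the Johnson scheme that delivers the exact threshold directly. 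Finally, for the uniqueness statement when $n > (t+1)(k-t+1)$, I would trace equality back through the induction: strict inequality in the threshold forces each inequality used to be strict unless both links are extremal, so that $\cG$ is exactly the $t$-star with centre $[t]$; it then remains to verify that the only families whose shifts produce a $t$-star are $t$-stars themselves, by analysing when an applicable shift leaves the family unchanged.
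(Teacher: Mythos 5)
The paper does not prove this statement: Theorem~\ref{thm:ekr} is quoted as the classical Erd\H{o}s--Ko--Rado theorem with the sharp threshold supplied by Frankl and Wilson, and is cited as a black box. Your proposal is therefore being judged against the known proofs rather than against an argument in the paper.

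Your outline of the shifting machinery is sound as far as it goes: the $(i,j)$-shifts preserve size and the $t$-intersecting property, and for a left-compressed family with $n > 2k-t$ the link $\cG_1$ of the element $n$ is indeed $t$-intersecting (your argument via the blocked shift of an element $m \notin G_1 \cup G_2$ is the standard one), while the $\cG_1$ branch of the induction preserves the hypothesis. But there are genuine gaps. First, your stated base case is wrong: in the regime $n \le 2k-t$ the whole of $\binom{[n]}{k}$ is $t$-intersecting and has \emph{more} than $\binom{n-t}{k-t}$ members whenever $n > k$ (for $t=1$, $n=2k-1$ one has $\binom{2k-1}{k} > \binom{2k-2}{k-1}$), so the claim cannot be ``checked directly'' there; the correct terminal case for the $\cG_1$ recursion is $k=t$. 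Second, and decisively, you correctly observe that the $\cG_0$ branch fails exactly at $n = (t+1)(k-t+1)$ and then defer the resolution to ``a separate structural estimate'' or to ``Wilson's eigenvalue bound'' without carrying either out. That deferred step is not a refinement --- it is the entire content of the theorem at the stated threshold: the elementary shifting induction only yields the bound for substantially larger $n$, Frankl's combinatorial argument reaching the sharp threshold works only for $t \ge 15$, and the general case genuinely requires Wilson's algebraic computation in the Johnson scheme. Likewise the uniqueness claim, which you reduce to ``analysing when an applicable shift leaves the family unchanged,'' is a known but nontrivial reversal of the compression process that you do not supply. As a survey of how the theorem is proved your proposal is accurate; as a proof it is incomplete precisely where the difficulty lies.
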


A cornerstone of extremal set theory, the Erd\H{o}s--Ko--Rado theorem continues to inspire a great deal of research to this day, having been strengthened in several ways.  Researchers have also obtained Erd\H{o}s--Ko--Rado-type statements in various other settings, most notably including vectors spaces and permutations.  We say vector spaces $V_1$ and $V_2$ are $t$-intersecting if $\dim (V_1 \cap V_2) \ge t$, while a pair of permutations $\pi_1$ and $\pi_2$ is $t$-intersecting if they agree on at least $t$ indices; that is, $\card{ \{ i : \pi_1(i) = \pi_2(i) \} } \ge t$.  We shall discuss some of the history of these extensions in greater detail in later sections, but once again the largest $t$-intersecting families are trivial constructions, again called $t$-stars.

In recent years much attention has been paid to the Erd\H{o}s--Rothschild extension of the Erd\H{o}s--Ko--Rado theorem, and this shall be the focus of this paper.  Given a family of sets, vector spaces or permutations, define an \emph{$(r,t)$-colouring} of the family to be an $r$-colouring of its members such that each colour class is $t$-intersecting.\footnote{To simplify the statements of our general results, we do not differentiate between sets, vector spaces or permutations in our notation.  As Poincar\'e noted, ``Mathematics is the art of giving the same name to different things."}  The Erd\H{o}s--Rothschild problem then asks which families maximise the number of $(r,t)$-colourings.

Note that here the forbidden monochromatic structures have size two, namely pairs of sets that intersect in fewer than $t$ elements.  As a result, in contrast to the triangle-free case, the problem is trivial when $r=2$.  Indeed, let $\cF$ be any family, and let $\cF' \subseteq \cF$ be a maximal $t$-intersecting subfamily.  For any $F \in \cF \setminus \cF'$, there must be some $F' \in \cF'$ such that $\{ F, F' \}$ is not $t$-intersecting.  Thus in any $(2,t)$-colouring of $\cF$, $F$ and $F'$ must receive opposite colours.  It follows that every $(2,t)$-colouring of $\cF$ is determined by its restriction to $\cF'$, and hence there are at most $2^{\card{\cF'}}$ colourings.  On the other hand, any two-colouring of a $t$-intersecting family $\cG$ is a $(2,t)$-colouring, giving precisely $2^{\card{\cG}}$ such colourings.  Hence the largest $t$-intersecting families also have the most $(2,t)$-colourings.\footnote{With a little more work, one can often show uniqueness.}

The problem is of interest, then, when $r \ge 3$.  Hoppen, Kohayakawa and Lefmann~\cite{HKL12} determined all $k$-uniform set families with an asymptotically maximum number of $(r,t)$-colourings for all $k, r$ and $t$ and all $n \ge n_0(k,r,t)$ sufficiently large,\footnote{Their bounds seem to require $n= \Omega_{r,t} \left( k^{t^2 + t + 1} \right)$.} and found the exact maximisers whenever $k \ge 2t-1$.  In particular, they showed the optimal families were all unions of $\ceil{ r/3 }$ $t$-stars.  When $t \ge 2$, one still has to specify which $t$-stars to take, as there are non-isomorphic choices, and Hoppen, Kohayakawa and Lefmann gave a precise description of the optimal families.  Some stability results were also obtained, and in a later paper~\cite{HKL15}, the same authors consider the problem where one does not forbid monochromatic disjoint pairs, but rather matchings of larger size.

In~\cite{HLO16}, Hoppen, Lefmann and Odermann studied the Erd\H{o}s--Rothschild problem for intersecting vector spaces.  For a fixed prime power $q$ and integers $k > t \ge 1$, and sufficiently large $n$, they determined which families of $k$-dimensional subspaces of $\Fqn$ maximise the number of $(3,t)$- and $(4,t)$-colourings, and conjectured that for larger $r$, the results should mirror those in the set family setting.

\subsection{Our results}

In this paper we seek to unify and extend the previous results in this direction.  We make use of the work of Balogh, Das, Delcourt, Liu and Sharifzadeh~\cite{BDDLS15}, who provided strong upper bounds on the number of maximal $t$-intersecting families of sets, vector spaces and permutations.  We then give a general framework in which one may couple these bounds with known extremal and stability results to determine which families maximise the number of $(r,t)$-colourings without resorting to arguments specific to the setting under consideration.

For instance, our first theorem provides a simple condition that guarantees the families with the most $(3,t)$-colourings are precisely the largest $t$-intersecting families.  In order to prevent confusion with maximal families, we refer to these largest $t$-intersecting families as being \emph{extremal}.  We shall refer to any family maximising the number of $(r,t)$-colourings as an \emph{optimal} family.

\begin{theorem} \label{thm:3col}
Let $\maxfam$ denote the size of the extremal $t$-intersecting families, $\hmsize$ the size of the largest non-extremal maximal $t$-intersecting families, and suppose two distinct extremal $t$-intersecting families can have at most $\twomax$ members in common. Suppose further that there are at most $\nummax$ maximal $t$-intersecting families. Provided
\begin{equation}\label{ineq:3col}
		\maxfam - \max \left( \hmsize, \twomax \right) - \frac{6 \lg M}{2 \lg 3 - 3} > 0,
\end{equation}
a family $\cF$ can have at most $3^{\maxfam}$ $(3,t)$-colourings, with equality if and only if $\cF$ is an extremal $t$-intersecting family.
\end{theorem}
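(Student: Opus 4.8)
The plan is to separate two regimes according to whether $\cF$ is itself $t$-intersecting. If $\cF$ is $t$-intersecting, then every one of its $3^{\card{\cF}}$ colourings is automatically a $(3,t)$-colouring, since each colour class is a subfamily of a $t$-intersecting family; as $\card{\cF} \le \maxfam$ with equality exactly for the extremal families, this regime contributes at most $3^{\maxfam}$ colourings and already pins down the equality case. All the real work therefore goes into showing that a \emph{non}-$t$-intersecting $\cF$ has \emph{strictly} fewer than $3^{\maxfam}$ colourings.

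For such $\cF$, I would bound the number $T$ of $(3,t)$-colourings by a union bound over ordered triples of maximal $t$-intersecting families. Given a colouring with colour classes $\cF_1, \cF_2, \cF_3$, each class is $t$-intersecting and hence extends to some maximal family $\cM_i$; charging the colouring to the triple $(\cM_1, \cM_2, \cM_3)$ and noting that the number of colourings compatible with a fixed triple is at most $\prod_{F \in \cF} c(F)$, where $c(F) = \card{\{i : F \in \cM_i\}}$, gives
\[
T \;\le\; \sum_{(\cM_1,\cM_2,\cM_3)} \; \prod_{F \in \cF} c(F).
\]
The decisive structural point is that no diagonal triple $\cM_1 = \cM_2 = \cM_3 = \cM$ can contribute: it would force $\cF \subseteq \cM$, contradicting that $\cF$ is not $t$-intersecting. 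Thus only triples that are not three copies of a single family survive.

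The heart of the argument is a per-triple estimate. Writing $a = \card{\{F \in \cF : c(F) = 3\}}$ and $b = \card{\{F \in \cF : c(F) = 2\}}$, one has $\prod_F c(F) = 3^a 2^b$, and the weighted AM--GM inequality $3^a 2^b \le 3^{(3a+2b)/3}$ together with the budget $3a + 2b \le \sum_i \card{\cM_i \cap \cF} \le 3\maxfam$ controls the product. For a non-diagonal triple the triple-intersection $\cM_1 \cap \cM_2 \cap \cM_3$ lies either inside the intersection of two distinct extremal families or inside a non-extremal maximal family, so that $a \le \max(\hmsize, \twomax)$. Feeding this into the bound (which is increasing in $a$) yields $\prod_F c(F) \le 3^{\maxfam}\,(8/9)^{(\maxfam - \max(\hmsize,\twomax))/2}$, where the exponent $2\lg 3 - 3 = \lg(9/8)$ arises precisely from $1 - \tfrac{3}{2}\log_3 2$.

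Summing over at most $\nummax^3$ triples then gives $T \le \nummax^3\, 3^{\maxfam}(8/9)^{(\maxfam - \max(\hmsize,\twomax))/2}$, which is strictly below $3^{\maxfam}$ exactly when $\nummax^3 (8/9)^{(\maxfam - \max(\hmsize,\twomax))/2} < 1$, i.e.\ when $\maxfam - \max(\hmsize, \twomax) > \frac{6\lg \nummax}{2\lg 3 - 3}$ --- which is the hypothesis. The step I expect to be most delicate is the per-triple estimate: one must verify $a \le \max(\hmsize,\twomax)$ uniformly over the different shapes of non-diagonal triples (two equal extremal families plus a distinct third, three distinct extremal families, or any triple meeting a non-extremal family) and check that the crude budget $3\maxfam$ is not the binding constraint in a way that would spoil the constant. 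Extracting exactly $2\lg 3 - 3$, rather than a lossier approximation, is what ties the final inequality to the stated hypothesis and, combined with the trivial $t$-intersecting regime, delivers both the bound $3^{\maxfam}$ and the characterisation of equality.
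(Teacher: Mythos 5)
Your proposal is correct and follows essentially the same route as the paper: charging each colouring to a triple of maximal $t$-intersecting families, bounding the per-triple count by $2^{a_2}3^{a_3}$ with $a_3 \le \max(\hmsize,\twomax)$ and the budget $2a_2+3a_3 \le 3\maxfam$, and losing a factor $\nummax^3$ over triples. The only cosmetic differences are that you use a union bound over all triples where the paper applies pigeonhole to extract a single heavy triple, and that you dispose of the diagonal (i.e.\ $t$-intersecting) case up front rather than inside that argument; the per-triple computation and the emergence of the constant $2\lg 3 - 3$ are identical.
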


The inequality~\eqref{ineq:3col} holds in the settings of set families, vector spaces and permutations, and hence one can immediately deduce that the optimal families are precisely the extremal ones.  Moreover, we show that the inequality holds under very mild, and sometimes optimal, conditions on the size of $n$ relative to the other parameters.

We can also apply our methods to study the problem for $r \ge 4$ colours.  In this case, it is known that the optimal families are no longer the extremal $t$-intersecting families, but instead usually a union of $\ceil{r/3}$ extremal families.  This introduces additional complications to the problem, as the choice of which extremal families to take affects the number of $(r,t)$-colourings the resulting family will have.  We provide a general stability result, showing that provided one inequality is satisfied, any family with close to the optimum number of $(r,t)$-colourings must be a union of the right number of extremal $t$-intersecting families.  However, this statement is a little more technical, and hence we shall present later in Section~\ref{sec:multicol}.  Instead, we highlight the corollaries we obtain in the settings of set families and vector spaces.

\begin{restatable}{prop}{rcolsets} \label{prop:rcolsets}
There is some absolute constant $C$ such that whenever $r \ge 5$, $k > t \ge 1$ and $n \ge C r^2 k (k-t)$, the following is true:
\begin{itemize}
	\item[(i)] if $k \ge 2t-1$, a family is optimal if and only if it is the union of $\ceil{r/3}$ $t$-stars with pairwise-disjoint centres, and
	\item[(ii)] if $k \le 2t-2$, all optimal families consist of $\ceil{r/3}$ pairwise-disjoint $t$-stars.
\end{itemize}
\end{restatable}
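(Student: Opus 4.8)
The plan is to deduce Proposition~\ref{prop:rcolsets} from a general stability result for $r \ge 4$ colours (the technical statement promised for Section~\ref{sec:multicol}) by verifying its hypotheses in the set-family setting, using the Erd\H{o}s--Ko--Rado theorem (Theorem~\ref{thm:ekr}) for the relevant extremal numbers and the Balogh--Das--Delcourt--Liu--Sharifzadeh bounds~\cite{BDDLS15} to control the number $M$ of maximal $t$-intersecting families. First I would record the lower bound: a disjoint union of $\ceil{r/3}$ $t$-stars admits at least roughly $3^{\ceil{r/3} \binom{n-t}{k-t}}$ valid $(r,t)$-colourings, since each of the three ``colour triples'' can be distributed independently across the stars; more precisely, partitioning the $r$ colours into $\ceil{r/3}$ groups and assigning each group to handle one star yields the dominant term. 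This pins down the target optimum $3^{(r,t)}$ up to lower-order factors and shows the claimed constructions are at least asymptotically optimal.

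Next I would invoke the general stability machinery to argue that \emph{any} optimal (or near-optimal) family $\cF$ must, after discarding a negligible number of sets, be contained in a union of exactly $\ceil{r/3}$ extremal families, i.e.\ $\ceil{r/3}$ $t$-stars. The intuition is entropy-counting: if $\cF$ is spread over more maximal $t$-intersecting pieces than $\ceil{r/3}$, the number of ways to $r$-colour is dominated by how the $r$ colours split into at most three ``live'' classes per piece, and having more pieces forces colour classes to be small and costs a factor that the $\frac{6\lg M}{2\lg 3 - 3}$-type slack in~\eqref{ineq:3col} (suitably generalised to $r$ colours) cannot compensate for once $n \ge C r^2 k(k-t)$. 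The hypothesis on $n$ is exactly what makes $\maxfam = \binom{n-t}{k-t}$ dominate both the overlap term $\twomax$ and $\lg M$, so the clean-up leaves precisely $\ceil{r/3}$ full $t$-stars.

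Having reduced to a union of $\ceil{r/3}$ $t$-stars, the remaining task is to determine which choice of centres maximises the colouring count, and here the two regimes diverge. When $k \ge 2t-1$ (part~(i)), two $t$-stars with distinct centres $X_1, X_2$ share $\binom{n - |X_1 \cup X_2|}{k - |X_1 \cup X_2|}$ sets, which is strictly decreasing in $|X_1 \cup X_2|$; to maximise the union's size, and thereby the number of colourings, one wants the stars as disjoint as possible, forcing pairwise-disjoint centres. The condition $k \ge 2t-1$ is precisely what guarantees distinct $t$-sets can be realised by genuinely different $k$-sets so that disjoint centres give honestly disjoint stars; I would verify this by the short computation comparing overlaps. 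When $k \le 2t-2$ (part~(ii)), by contrast, a single $k$-set can lie in two $t$-stars only if the centres are close, and the optimisation instead drives the stars to be fully pairwise-disjoint as \emph{families}, which is the weaker conclusion stated.

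The main obstacle will be the second step --- converting the ``contained in $\ceil{r/3}$ maximal families, up to a few sets'' conclusion of the general stability theorem into the exact statement that the optimal family \emph{is} a union of full $t$-stars with the prescribed intersection pattern. This requires a supersaturation-style argument showing that (a) no sets outside the chosen $t$-stars can be added without creating too many monochromatic disjoint pairs in every colour class, so the optimum really uses complete stars, and (b) among all choices of $\ceil{r/3}$ stars, only the disjoint-centre configuration (respectively the pairwise-disjoint configuration) attains the maximum, ruling out near-ties. Both parts hinge on having $n$ large enough that the $\binom{n-t}{k-t}$ scale genuinely separates the competing constructions; tracking the dependence of $C$ on $r$ through the $\lg M$ term is where the $r^2$ in $n \ge C r^2 k(k-t)$ enters, and making that bookkeeping tight is the delicate part.
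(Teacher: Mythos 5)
Your skeleton matches the paper's: a lower bound from a union of $\ceil{r/3}$ $t$-stars, the general stability theorem (Theorem~\ref{thm:multicol}, enabled by showing $\Delta$ is large once $n \ge Cr^2k(k-t)$, which is Lemma~\ref{lem:setsdelta}) to force any optimal family to be a union of the right number of $t$-stars, and then an optimisation over the choice of centres. The first two steps are essentially the paper's argument and are fine.

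The gap is in your third step, which you compress to ``a short computation comparing overlaps.'' Your heuristic --- that one should maximise the size of the union, hence minimise overlaps --- is not a valid argument, because the number of $(r,t)$-colourings is not a monotone function of $\card{\cF}$: a set lying in two stars has \emph{more} colours available to it (e.g.\ $6$ instead of $3$), so overlaps trade quantity of sets against choice per set. (Indeed, in the vector-space analogue, Proposition~\ref{prop:multicolvs}(iii), the optimum \emph{concentrates} the centres inside a single $2t$-dimensional subspace precisely to exploit such overlaps.) For set families the trade does come out in favour of disjoint centres, but only because $3\cdot 3 > 3+3$; when both stars involved receive only two colours one has $2\cdot 2 = 2+2$, a tie, and this is exactly the exceptional case in the paper's Lemma~\ref{lem:setsstarswap}. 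The paper handles this by an explicit injection $\Phi(\vec{C}) \hookrightarrow \t\Phi(\vec{C})$ between colouring sets (gaining a factor $\tfrac{6}{5}$ except in the two-two case), followed by an averaging argument over the colour partitions $\fC' \subseteq \fC$ having at least one large part, with carefully chosen constants ($\tfrac{19}{20}$, $\tfrac{21}{20}$, the $\phi_1 \ge \tfrac56\phi_0$ dichotomy) to beat the $(1+O(2^{-\Delta}))$ multiplicative errors coming from Corollary~\ref{cor:asymcount}. Converting the asymptotic count into exact optimality is the heart of the proof and your proposal does not supply it.

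Two further omissions: when $r \equiv 1 \pmod 3$ the stability theorem permits $\floor{r/3}$ stars (one part of size $4$) as well as $\ceil{r/3}$, and one needs a separate comparison (the paper's Lemma~\ref{lem:sets4v22}, another injection, combined with a count of the partitions $\card{\t\fC} = 3(s+1)\card{\fC}$) to rule out $\floor{r/3}$. And your dichotomy between the regimes $k \ge 2t-1$ and $k \le 2t-2$ is slightly off: for $k \ge 2t$ disjoint centres do \emph{not} give disjoint stars (sets containing the $2t$-element union of two centres lie in both), which is precisely why the conclusion in part (i) is about centres rather than stars, while for $k \le 2t-2$ the condition $\card{T_i \cap T_j} \le 2t-k-1$ forces the stars themselves to be disjoint without forcing the centres to be.
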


This characterisation of optimal families was first found by Hoppen, Kohayakawa and Lefmann~\cite{HKL12}, provided $r, k$ and $t$ were fixed and $n$ was sufficiently large.  Our approach using the maximal $t$-intersecting families allows us to give a relatively short proof that holds for much more moderate bounds on $n$.

In the setting of vector spaces, the initial results are due to Hoppen, Lefmann and Odermann~\cite{HLO16}, who determined which families of vector spaces maximise the number of $(r,t)$-colourings when $2 \le r \le 4$.  As we shall describe in detail in Section~\ref{subsec:multicolvectors}, they raised a conjecture about the optimal families for larger values of $r$.  In the following result, we provide a partial characterisation of the asymptotically optimal families.

\begin{restatable}{prop}{multicolvs} \label{prop:multicolvs}
Let $k > t \ge 1$ be integers, $r = 3s$, where $s \ge 2$ is an integer, $q$ a prime power, and $n \ge n_0(r,k,t,q)$ sufficiently large.
\begin{itemize}
	\item[(i)] If $k \le 2t - 1$, then a family $\cV$ of $k$-dimensional subspaces of $\Fqn$ asymptotically maximises the number of $(r,t)$-colourings if and only if $\cV$ is the union of $s$ pairwise-disjoint $t$-stars.
	\item[(ii)] If $k \ge 2t$, then every asymptotically optimal family $\cV$ of $k$-dimensional subspaces of $\Fqn$ is the union of $s$ $t$-stars with centres $T_i$, $1 \le i \le s$, such that $T_i \cap T_j = \{ \vec{0} \}$ for all $i \neq j$.
	\item[(iii)] If $k \ge 2t$ and $q \ge s - 1$, then a family $\cV$ of $k$-dimensional subspaces of $\Fqn$ asymptotically maximises the number of $(r,t)$-colourings if and only if there is some $(2t)$-dimensional subspace $W$ such that $\cV$ is the union of $s$ $t$-stars with centres $T_i$, $1 \le i \le s$, where each $T_i$ is a $t$-dimensional subspace of $W$ and $T_i \cap T_j = \{ \vec{0} \}$ for all $i \neq j$.
\end{itemize}
\end{restatable}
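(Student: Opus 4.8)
The plan is to combine the general stability result for $r \ge 4$ colours, described above and deferred to Section~\ref{sec:multicol}, with a careful optimisation of the number of $(r,t)$-colourings over all configurations of $s$ $t$-stars. Write $\maxfam = \qbinom{n-t}{k-t}$ for the size of a single $t$-star and $\maxfam' = \qbinom{n-2t}{k-2t}$ for the number of $k$-dimensional subspaces containing a fixed $2t$-dimensional space. The first step is to check, for $n \ge n_0(r,k,t,q)$, that the hypotheses of the general result hold in the vector-space setting: the extremal $t$-intersecting families here are exactly the $t$-stars (by the Frankl--Wilson analogue of Theorem~\ref{thm:ekr}), and the bounds on the number of maximal $t$-intersecting families and on the size of the largest non-extremal maximal family are provided by~\cite{BDDLS15}. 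Since $r = 3s$ gives $\ceil{r/3} = s$, this reduces the problem to families $\cV = \cS_1 \cup \dots \cup \cS_s$, where $\cS_i$ is the $t$-star with centre $T_i$, and it remains only to determine which intersection pattern of the centres $T_i$ maximises the colouring count.

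For the second step I would estimate this count to leading exponential order. For $V \in \cV$ let $I(V) = \{ i : V \in \cS_i \}$ record the stars containing $V$. The dominant colourings are the \emph{canonical} ones: one partitions the $r = 3s$ colours into $s$ triples, assigns the triple $A_i$ to $\cS_i$, and colours each $V$ with an arbitrary colour from $\bigcup_{i \in I(V)} A_i$. Every such colouring is valid, since each colour class lies inside a single star and is hence $t$-intersecting, and a space in $m$ stars then has exactly $3m$ available colours. As summing $\prod_i \card{A_i}^{\maxfam}$ over all partitions of the colours into $s$ parts is dominated, for large $n$, by the equal partition into triples, the leading exponential term is governed by
\begin{equation*}
	E(\cV) = \sum_{V \in \cV} \lg\left( 3 \card{I(V)} \right) = \sum_{m \ge 1} n_m \lg(3m),
\end{equation*}
where $n_m$ counts the spaces lying in exactly $m$ stars. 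The task thus becomes to maximise $E(\cV)$ subject to $\sum_m m\, n_m = s \maxfam$ (which records $\sum_i \card{\cS_i} = s \maxfam$), together with the constraints placed on the profile $(n_m)$ by the intersection pattern of the $T_i$.

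The optimisation splits according to $k$. Since $\cS_i \cap \cS_j$ is the set of $k$-spaces containing $T_i + T_j$, its size grows with $\dim(T_i \cap T_j)$, and it is empty exactly when $\dim(T_i + T_j) = 2t - \dim(T_i \cap T_j) > k$. If $k \le 2t-1$ the stars can be taken pairwise disjoint, attaining the unconstrained maximum $E = s \maxfam \lg 3$; every such configuration ties here, while any overlap strictly lowers $E$, because fusing two multiplicity-$1$ spaces into one multiplicity-$2$ space changes the weight by $\lg 6 - 2 \lg 3 = \lg(2/3) < 0$. This gives part~(i). If $k \ge 2t$ disjointness is impossible, and minimising each forced pairwise overlap requires $T_i \cap T_j = \{ \vec{0} \}$, which yields part~(ii). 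For part~(iii) one optimises $E$ further among the configurations with $T_i \cap T_j = \{ \vec{0} \}$: placing all centres inside a common $2t$-dimensional space $W$ collapses every pairwise and higher intersection onto the single set $\{ V : V \supseteq W \}$ of size $\maxfam'$, simultaneously maximising $\card{\cV}$ and the number of colours free on the shared spaces. A direct comparison shows this beats the generic arrangement by a factor $\exp(\Theta(\maxfam'))$; for instance when $s = 3$ one computes that $E$ increases by $\maxfam' \lg(9/8) > 0$.

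The main obstacle is the matching upper bound on $E(\cV)$, that is, proving the canonical colourings really do dominate. The non-canonical colourings, in which one colour class meets two stars through a cross-pair $V \in \cS_i$, $W \in \cS_j$ with $\dim(V \cap W) \ge t$, must be shown to contribute negligibly, and this requires the stability theory for $t$-intersecting families of subspaces: any $t$-intersecting subfamily of $\cV$ not contained in a single star is exponentially smaller than a star. A secondary difficulty is the combinatorial optimisation behind part~(iii), where one must verify that the aligned configuration maximises $E$ among \emph{all} arrangements with $T_i \cap T_j = \{ \vec{0} \}$, not merely against the generic one; this is exactly where the hypothesis $q \ge s-1$ enters, guaranteeing that $W$ admits a partial $t$-spread of size $s$ (since $s \le q+1 \le q^t + 1$) and pinning down the extremal family up to the choice of $W$.
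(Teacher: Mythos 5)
Your overall strategy is the paper's: verify the hypotheses of the general stability machinery (Theorem~\ref{thm:multicol} and Corollary~\ref{cor:asymcount}) to reduce to unions of $s$ $t$-stars, then optimise $\prod_{V} a(V)$ with $a(V) = 3m(V)$ over the intersection pattern of the centres. Your ``main obstacle'' about non-canonical colourings is in fact already discharged by the machinery you invoke in your first step: Corollary~\ref{cor:asymcount} shows the atypical colourings contribute only a $(1+O(2^{-\Delta}))$ factor once one checks $\Delta \to \infty$ (the paper does this in Lemma~\ref{lem:deltavs} using the Hilton--Milner-type bound of Ellis and the count of maximal families from~\cite{BDDLS15}), so no further stability input is needed at the colouring-count stage. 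Parts (i) and (ii) then go through exactly as you describe: a single overlapping pair costs a factor $2/3$ in (i), and a nontrivial centre intersection in (ii) forces at least $\qbinom{n-2t+1}{k-2t+1}$ subspaces of multiplicity $\ge 2$, which loses a factor $(2/3)^{q^{(k-2t+1)(n-k)}}$ against the $3^{-O(q^{(k-2t)(n-k)})}$ achieved by trivially intersecting centres.

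The genuine gap is in part (iii), and you have correctly located but not closed it. Comparing the aligned configuration only against ``the generic arrangement'' does not prove the ``only if'' direction: one must rule out \emph{every} intermediate configuration of pairwise trivially-intersecting centres. The paper does this by letting $U_1,\hdots,U_\ell$ be the distinct $(2t)$-dimensional spans of pairs of centres, with $m_j$ centres lying in $U_j$, so that $\sum_j \binom{m_j}{2} = \binom{s}{2}$, and the relative loss in $\log_3 \card{\Phi(\vec{C})}$ is $(1-o(1))\qbinom{n-2t}{k-2t}\sum_j \left( m_j - 1 - \log_3 m_j \right)$. The required inequality is that this sum is at least $s - 1 - \log_3 s$ with equality if and only if $\ell = 1$ and $m_1 = s$ (Claim~\ref{clm:vsopt} in the paper), proved by showing $g(x) = \bigl(x-1-\log_3 x\bigr)/\binom{x}{2}$ is decreasing on $x \ge 2$ and writing the sum as $\sum_j \binom{m_j}{2} g(m_j) \ge g(s)\binom{s}{2}$. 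This is a nontrivial convexity-type step, not a routine verification, and your proposal as written does not supply it; you are right that $q \ge s-1$ enters only to guarantee the aligned configuration exists (a greedy or spread-type construction of $s$ pairwise trivially-intersecting $t$-subspaces of a fixed $(2t)$-space), not to drive the optimisation.
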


\subsection{Notation and organisation}

Throughout this paper we shall use standard combinatorial notation.  For instance, $[n]$ represents the first $n$ positive integers, $\{ 1, 2, \hdots, n \}$, and shall be the ground set for our set families.  Given a set $S$ and $k \in \mathbb{N}$, $\binom{S}{k}$ represents all $k$-subsets of $S$.  The Gaussian binomial coefficient, $\qbinom{n}{k}$, represents the number of $k$-dimensional subspaces in $\Fqn$.

We shall also consistently label some parameters common to all the settings we shall work in.  $\maxfam$ will denote the size of the extremal $t$-intersecting families, and $\twomax$ will be the largest possible intersection of two distinct extremal families.  $\hmsize$ will represent the size of the largest non-extremal maximal $t$-intersecting families, and $\nummax$ will give an upper bound on the number of maximal $t$-intersecting families.

The remainder of this paper is laid out as follows.  In Section~\ref{sec:3col}, we shall study families with the maximum number of $(3,t)$-colourings, first proving Theorem~\ref{thm:3col}, and then applying it to families of permutations, vector spaces and set families.  We turn our attention to case $r \ge 4$ in Section~\ref{sec:multicol}, where we first prove a general stability result, and then use it to derive Propositions~\ref{prop:rcolsets} and~\ref{prop:multicolvs}.  In Section~\ref{sec:conc} we close with some concluding remarks and open problems.


\section{Three-coloured families} \label{sec:3col}

In this section we prove our results for $(3,t)$-colourings of families.  The first subsection is devoted to the general theorem, which gives a simple condition for the number of $(3,t)$-colourings to be maximised by the largest $t$-intersecting families.  In the subsequent subsections, we verify this condition in the settings of permutations, vector spaces and set families.

\subsection{A general theorem} \label{subsec:3colgen}

We now prove our main result for the three-colour Erd\H{o}s--Rothschild problem.  Theorem~\ref{thm:3col} shows that if a single inequality concerning the number and structure of maximal $t$-intersecting families is satisfied, the number of $(3,t)$-colourings is maximised by the largest $t$-intersecting families.

Despite its generality (or perhaps because of it), Theorem~\ref{thm:3col} has a short and simple proof.

\begin{proof}[Proof of Theorem~\ref{thm:3col}]
First, for every $t$-intersecting family $\cI$, fix an (arbitrary) assignment of a maximal $t$-intersecting family $\cM(\cI)$ containing $\cI$.  Now let $\cF$ be any family, and let $c(\cF)$ denote the number of $(3,t)$-colourings of $\cF$.  We wish to show $c(\cF) \le 3^{\maxfam}$, with equality only if $\cF$ is itself a $t$-intersecting family of size $\maxfam$.

Observe that the colour classes of every $(3,t)$-colouring of $\cF$ give a partition $\cF = \cI_1 \sqcup \cI_2 \sqcup \cI_3$ into $t$-intersecting families.  We can then map the $(3,t)$-colourings of $\cF$ to triples of maximal intersecting families $(\cM_1, \cM_2, \cM_3)$, where $\cM_i = \cM(\cI_i)$ for $1 \le i \le 3$.  Let $c(\cM_1, \cM_2, \cM_3)$ denote the number of $(3,t)$-colourings of $\cF$ mapped to the triple $(\cM_1, \cM_2, \cM_3)$.

The range of this map has size at most $\nummax^3$, and so we can find some triple $(\cM_1, \cM_2, \cM_3)$ with $c(\cM_1, \cM_2, \cM_3) \ge c(\cF) M^{-3}$.  If $\cM_1 = \cM_2 = \cM_3 = \cM$ for some maximal $t$-intersecting family $\cM$, then we have $\cF \subseteq \cM$, and so $c(\cF) = 3^{\card{\cF}} \le 3^{\card{\cM}} \le 3^{\maxfam}$, with equality if and only if $\cF = \cM$ and $\card{\cM} = \maxfam$.  Hence we may assume $\cM_1, \cM_2$ and $\cM_3$ are not all the same.

We now seek an upper bound on the number of $(3,t)$-colourings mapped to $(\cM_1, \cM_2, \cM_3)$.  Noting that a set $F \in \cF$ can receive colour $i$ only if $F \in \cM_i$, we denote by $a(F) = \card{ \{ i : F \in \cM_i \} }$ the number of colours the set $F$ could receive.  For $1 \le j \le 3$, let $a_j = \card{ \{ F \in \cF : a(F) = j \} }$ be the number of sets that can receive $j$ colours.  We then have $c(\cM_1, \cM_2, \cM_3) \le \prod_{F \in \cF} a(F) = 2^{a_2} 3^{a_3}$.

Since there are at least two distinct maximal families in $\cM_1, \cM_2$ and $\cM_3$, we either have a non-extremal $t$-intersecting family or two distinct extremal $t$-intersecting families, and so $a_3 = \card{ \cM_1 \cap \cM_2 \cap \cM_3 } \le \max \left( \hmsize, \twomax \right)$.  We also have $2 a_2 + 3 a_3 \le a_1 + 2a_2 + 3a_3 = \card{\cM_1} + \card{\cM_2} + \card{\cM_3} \le 3 \maxfam$, and so $a_2 \le \frac32(\maxfam - a_3)$.  Thus
\[ c(\cM_1, \cM_2, \cM_3) \le 2^{\frac32 ( \maxfam - a_3)} 3^{a_3} = 2^{\frac32 \maxfam} \left( 3 \cdot 2^{-\frac32} \right)^{a_3} \le 2^{\frac32 \maxfam} \left( 3 \cdot 2^{-\frac32} \right)^{\max \left( \hmsize, \twomax \right)}. \]
Since $c(\cM_1, \cM_2, \cM_3) \ge c(\cF) M^{-3}$, this gives
\[ c(\cF) \le 2^{\frac32 \maxfam} \left( 3 \cdot 2^{-\frac32} \right)^{\max \left( \hmsize, \twomax \right)} M^3 = 3^{\maxfam} \left( 2^{\frac32} 3^{-1} \right)^{\maxfam - \max \left( \hmsize, \twomax \right) - \frac{6 \lg M}{2 \lg 3 - 3}} < 3^{\maxfam}, \]
where the final inequality follows from~\eqref{ineq:3col}.  Hence the only families maximising the number of $(3,t)$-colourings are the extremal $t$-intersecting families.
\end{proof}

Note that Theorem~\ref{thm:3col} actually provides a strong stability result, showing that any family $\cF$ that is not $t$-intersecting must have exponentially fewer than the maximum number of $(3,t)$-colourings, provided the gap in~\eqref{ineq:3col} is large enough.

In order to obtain concrete results for permutations, vector spaces and set families, we must check that~\eqref{ineq:3col} holds.  This will entail using an Erd\H{o}s--Ko--Rado-type theorem to determine $\maxfam$, a Hilton--Milner-type theorem for $\hmsize$, and having an appropriate bound on $\nummax$, the number of maximal intersecting families.  In the following subsections, we verify the inequality in each of these settings.

\subsection{Permutations} \label{subsec:3colperms}

The first setting we consider is that of permutations, which, to the best of our knowledge, has not been studied previously.  Recall that a pair of permutations $\pi_1, \pi_2 \in S_n$ is said to be $t$-intersecting if $\card{ \{ i \in [n] : \pi_1(i) = \pi_2(i) \} } \ge t$; that is, they agree on at least $t$ indices.  A family $\cF \subseteq S_n$ is $t$-intersecting if every pair of its permutations is $t$-intersecting.

The natural construction of a $t$-intersecting family in $S_n$  is the $t$-star, which, for some $t$ indices $i_1 < i_2 < \hdots < i_t$ and $t$ distinct elements $x_1, x_2, \hdots, x_t \in [n]$, consists of all permutations $\pi$ such that $\pi(i_j) = x_j$ for all $1 \le j \le t$.  Such a family has size $(n-t)!$, and Ellis, Friedgut and Pilpel~\cite{EFP11} showed that, for fixed $t$ and $n$ sufficiently large, there are no larger $t$-intersecting families.

Ellis~\cite{E11} further proved a stability result, showing that for $t \in \mathbb{N}$ and $n \rightarrow \infty$, if $\cF \subseteq S_n$ is a $t$-intersecting family that is not contained in a $t$-star, we must have $\card{\cF} \le \left(1 - e^{-1} + o(1) \right) (n-t)!$.  The final ingredient we shall need, a bound on the number of maximal $t$-intersecting families, was obtained by Balogh et al.~\cite{BDDLS15}, who showed that when $n \ge t \ge 1$, there are at most $n^{n 2^{2n-2t + 1}}$ maximal $t$-intersecting families in $S_n$.

Combining these results with Theorem~\ref{thm:3col} allows us to determine which families in $S_n$ maximise the number of $(3,t)$-colourings.

\begin{cor} \label{cor:3perm}
For every $t \ge 1$, there is an $n_0 = n_0(t)$ such that if $n \ge n_0(t)$, then a family $\cF \subseteq S_n$ can have at most $3^{(n-t)!}$ $(3,t)$-colourings, with equality if and only if $\cF$ is a $t$-star.
\end{cor}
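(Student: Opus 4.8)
The plan is to apply Theorem~\ref{thm:3col} directly, so the entire task reduces to verifying that inequality~\eqref{ineq:3col} holds for $t$-intersecting families of permutations once $n$ is large enough. To this end I would collect the four parameters $\maxfam$, $\hmsize$, $\twomax$, and $\nummax$ from the results already quoted. By the Ellis--Friedgut--Pilpel theorem, for fixed $t$ and $n$ large the extremal $t$-intersecting families are exactly the $t$-stars, so $\maxfam = (n-t)!$. Ellis's stability result gives $\hmsize \le \left(1 - e^{-1} + o(1)\right)(n-t)!$, since any maximal $t$-intersecting family that is \emph{not} a $t$-star is in particular not contained in a $t$-star. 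Finally, the Balogh et al.\ bound supplies $\nummax \le n^{n 2^{2n - 2t + 1}}$.

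The remaining parameter is $\twomax$, the largest possible intersection of two distinct $t$-stars; this I would bound by a short direct computation. Two distinct $t$-stars are given by two distinct index--value prescriptions; a permutation lying in both must satisfy both, which pins down its values on the union of the two index sets. If the two prescriptions are compatible (consistent on any shared indices and injective overall), their common permutations number $(n - t')!$ where $t' \ge t+1$ is the size of that union, whence the intersection has size at most $(n - t - 1)!$. If they are incompatible the intersection is empty. Thus $\twomax \le (n-t-1)!$, which is smaller than $\hmsize$ for large $n$; hence $\max(\hmsize, \twomax) = \hmsize \le \left(1 - e^{-1} + o(1)\right)(n-t)!$.

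With these values in hand the verification is a growth-rate comparison. The left-hand side of~\eqref{ineq:3col} is
\[
\maxfam - \max(\hmsize, \twomax) - \frac{6 \lg M}{2 \lg 3 - 3}
\ge \left( e^{-1} - o(1) \right)(n-t)! - \frac{6 \lg \nummax}{2 \lg 3 - 3}.
\]
Since $\lg \nummax \le n 2^{2n - 2t + 1} \lg n$, the subtracted term is of order roughly $2^{2n}\, n \lg n$, which is \emph{singly} exponential in $n$, whereas the leading term $\left(e^{-1} - o(1)\right)(n-t)!$ grows \emph{factorially}. A factorial eventually dominates any fixed-base exponential, so the difference is positive for all sufficiently large $n = n_0(t)$. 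This is the only genuine obstacle, and it is mild: one simply needs $(n-t)!$ to outpace $2^{2n} n \lg n$, which holds comfortably once $n$ is large. Having checked~\eqref{ineq:3col}, Theorem~\ref{thm:3col} yields that $\cF$ has at most $3^{\maxfam} = 3^{(n-t)!}$ $(3,t)$-colourings, with equality precisely when $\cF$ is an extremal $t$-intersecting family, i.e.\ a $t$-star, which is exactly the claim of Corollary~\ref{cor:3perm}.
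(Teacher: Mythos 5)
Your proposal is correct and follows essentially the same route as the paper: it plugs the Ellis--Friedgut--Pilpel extremal bound, Ellis's stability bound, the computation $\twomax \le (n-t-1)!$, and the Balogh et al.\ count of maximal families into Theorem~\ref{thm:3col}, and then wins by the same factorial-versus-exponential comparison (the paper bounds the left side of~\eqref{ineq:3col} below by $\frac{(n-t)!}{2e} - 36\cdot 2^{2n-2t+1} n \lg n$). Your added remark that a non-extremal \emph{maximal} family cannot be contained in a $t$-star, which justifies applying the stability bound to obtain $\hmsize$, is a correct and worthwhile clarification of a step the paper leaves implicit.
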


\begin{proof}
We choose $n_0(t)$ to be large enough so that the results cited above apply and the inequality below is satisfied.  In particular, the extremal and stability results give $\maxfam = (n-t)!$ and $\hmsize = \left(1 - e^{-1} + o(1) \right) (n-t)!$.  Since two distinct $t$-stars are either disjoint or fix at least $t+1$ elements, it follows that $\twomax = (n-t-1)!$.  Finally, the bound on the number of maximal families allows $\nummax = n^{n2^{2n-2t+1}}$.  With these parameters in place, it remains to verify that~\eqref{ineq:3col} holds.  We have, for large $n$,
\[ \maxfam - \max \left( \hmsize, \twomax \right) - \frac{6 \lg \nummax}{2 \lg 3 - 3} \ge \frac{(n-t)!}{2e} - 36 \cdot 2^{2n - 2t + 1} n \lg n > 0, \]
since $\frac{6}{2 \lg 3 - 3} \le 36$ and $(n-t)! \ge \left( \frac{n-t}{e} \right)^{n-t}$.  Hence, by Theorem~\ref{thm:3col}, a family $\cF \subseteq S_n$ can have at most $3^{(n-t)!}$ $(3,t)$-colourings, with equality if and only if $\cF$ is a $t$-star.
\end{proof}

\subsection{Vector spaces} \label{subsec:3colvectors}

We next handle the setting of vector spaces.  For some prime power $q$, consider the $n$-dimensional vector space $\Fqn$.  Note that the number of $k$-dimensional subspaces in $\Fqn$ is given by the Gaussian binomial coefficient
\[ \qbinom{n}{k} = \prod_{i=0}^{k-1} \frac{q^{n-i} - 1}{q^{k-i}-1}. \]

Two $k$-dimensional subspaces $V_1$ and $V_2$ are $t$-intersecting if $\dim( V_1 \cap V_2 ) \ge t$, and a family $\cV$ of subspaces is $t$-intersecting if every pair of its subspaces is.  Again, the natural construction of a $t$-intersecting family of $k$-dimensional subspaces in $\Fqn$ is a $t$-star, which consists of all $k$-dimensional subspaces containing some fixed $t$-dimensional subspace.  Such a family contains $\qbinom{n-t}{k-t}$ subspaces.

For $t = 1$, Hsieh~\cite{H75} showed that the stars are the largest intersecting families when $n \ge 2k+1$.  This result was then extended by Frankl and Wilson~\cite{FW86} to all larger $t$, who proved that the same bound on $n$ guarantees that the $t$-stars are the largest $t$-intersecting families.

Stability results for these extremal theorems have also been obtained.  When $t = 1$, Blokhuis, Brouwer, Chowdhury, Frankl, Mussche, Patk\'os and Sz\H{o}nyi~\cite{BBCFMPS10} showed that if $q \ge 3$ and $n \ge 2k+1$, or $q = 2$ and $n \ge 2k+2$, the largest intersecting family that is not contained in a star has size $\qbinom{n-1}{k-1} - q^{k(k-1)} \qbinom{n-k-1}{k-1} + q^k$.  For larger $t$, a stability result was obtained by Ellis~\cite{E16}.  He proved that for fixed $k, q$ and $t$ and sufficiently large $n$, the largest $t$-intersecting family not contained in a $t$-star has size $\left(1 + O(q^{-n}) \right) \qbinom{t+2}{1} \qbinom{n-t-1}{k-t-1}$ if $k \le 2t+1$, and size $\left(1 - O ( q^{-n} ) \right) \qbinom{k-t+1}{1} \qbinom{n-t-1}{k-t-1}$ if $k \ge 2t+2$.

Finally, Balogh et al.~\cite{BDDLS15} bounded the number of maximal intersecting families of vector spaces by $\qbinom{n}{k}^{\binom{2k-1}{k-1}}$.  However, a simple modification of their proof, using F\"uredi's $t$-intersecting version of the Bollob\'as theorem for pairs of vector spaces (see~\cite{F84}), shows that the number of maximal $t$-intersecting families of vector spaces can be bounded by $\qbinom{n}{k}^{\binom{2(k-t)+1}{k-t}}$.

Using these results in unison with Theorem~\ref{thm:3col}, we can determine which families of $k$-dimensional subspaces of $\Fqn$ maximise the number of $(3,t)$-colourings.

\begin{cor} \label{cor:3vector}
A family $\cV$ of $k$-dimensional subspaces of $\Fqn$ can have at most $3^{\qbinom{n-t}{k-t}}$ $(3,t)$-colourings, with equality if and only if $\cV$ is a $t$-star, provided one of the following holds:
\begin{itemize}
	\item[(i)] $t \ge 2$, and $n$ is sufficiently large with respect to $q, t$ and $k$.
	\item[(ii)] $t = 1, k \ge 2$ and
\[ n \ge \begin{cases}
	2k + 1 &\mbox{ if } k = 2 \mbox{ and } q \ge 16, k = 3 \mbox{ and } q \ge 4, \mbox{or } k \ge 4 \mbox{ and } q \ge 3. \\
	2k + 2 &\mbox{ if } k \ge 4 \mbox{ and } q = 2. \\
	14 &\mbox{ otherwise}. \end{cases} \]
\end{itemize}
\end{cor}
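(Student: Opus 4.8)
The plan is to mirror the proof of Corollary~\ref{cor:3perm}: I shall read off the four parameters $\maxfam$, $\hmsize$, $\twomax$ and $\nummax$ of Theorem~\ref{thm:3col} from the extremal, stability and counting results cited above, and then verify that~\eqref{ineq:3col} holds throughout each of the stated ranges. Three of the parameters are common to both parts. By the theorems of Hsieh~\cite{H75} ($t=1$) and Frankl--Wilson~\cite{FW86} ($t \ge 2$), as soon as $n \ge 2k+1$ the extremal $t$-intersecting families are precisely the $t$-stars, so $\maxfam = \qbinom{n-t}{k-t}$. For $\twomax$, two distinct $t$-stars have distinct $t$-dimensional centres $T_1 \neq T_2$, and a $k$-dimensional subspace lies in both precisely when it contains $T_1 + T_2$; since $\dim(T_1 + T_2) \ge t+1$, with equality attainable, the largest common part has size $\twomax = \qbinom{n-t-1}{k-t-1}$. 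Finally, the modification of the argument of Balogh et al.~\cite{BDDLS15} gives $\nummax = \qbinom{n}{k}^{\binom{2(k-t)+1}{k-t}}$, so that $\lg \nummax = \binom{2(k-t)+1}{k-t} \lg \qbinom{n}{k}$ is linear in $n$, with coefficients depending only on $k$, $t$ and $q$.

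The two parts then diverge in the value of $\hmsize$. For part~(i), where $t \ge 2$, the stability theorem of Ellis~\cite{E16} gives $\hmsize = (1+o(1)) \qbinom{t+2}{1} \qbinom{n-t-1}{k-t-1}$ when $k \le 2t+1$ and $\hmsize = (1+o(1)) \qbinom{k-t+1}{1} \qbinom{n-t-1}{k-t-1}$ when $k \ge 2t+2$. For fixed $k$, $t$ and $q$, both $\hmsize$ and $\twomax$ are a factor of order $q^{n-k}$ smaller than $\maxfam = \qbinom{n-t}{k-t}$ as $n \to \infty$, so $\maxfam - \max(\hmsize, \twomax) = (1-o(1))\maxfam$ grows exponentially in $n$, whereas $\frac{6 \lg \nummax}{2\lg 3 - 3}$ grows only linearly. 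Hence~\eqref{ineq:3col} holds once $n$ is sufficiently large in terms of $q$, $t$ and $k$, and Theorem~\ref{thm:3col} yields the claim.

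For part~(ii), where $t = 1$, explicit thresholds are demanded, so I would argue quantitatively. Under the stated hypotheses on $q$ and $n$, the stability theorem of Blokhuis et al.~\cite{BBCFMPS10} gives $\hmsize = \qbinom{n-1}{k-1} - q^{k(k-1)} \qbinom{n-k-1}{k-1} + q^k$, so the two relevant gaps are
\[ \maxfam - \hmsize = q^{k(k-1)} \qbinom{n-k-1}{k-1} - q^k, \qquad \maxfam - \twomax = \qbinom{n-1}{k-1} - \qbinom{n-2}{k-2}, \]
and $\maxfam - \max(\hmsize, \twomax)$ is the smaller of these. Both are of order $q^{(k-1)(n-k)}$, hence exponential in $n$ with base $q \ge 2$, while the right-hand side of~\eqref{ineq:3col} is linear in $n$. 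Bounding the Gaussian binomials from below via $\qbinom{m}{j} \ge q^{j(m-j)}$ and $\lg \qbinom{n}{k}$ from above then reduces~\eqref{ineq:3col} to an explicit inequality in $n$, $k$ and $q$, which I would check in each regime of the stated bound on $n$.

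The crux of the argument is this last, explicit verification, and in particular obtaining the sharp thresholds. When $q$ is large relative to $k$ --- the cases listed with threshold $2k+1$, together with the case $q = 2$, $k \ge 4$ at threshold $2k+2$, the latter forced by the hypotheses of~\cite{BBCFMPS10} --- the exponential gap already dominates the linear bound at the extremal threshold of Hsieh and Frankl--Wilson, so the bound on $n$ is sharp. The remaining ``otherwise'' cases are exactly $k \in \{2,3\}$ with $q$ too small for the gap to overcome the $\binom{2k-1}{k-1}$ factor at $n = 2k+1$; here one must raise $n$ to $14$, beyond which $q^{(k-1)(n-k)}$ comfortably exceeds the linear bound for every relevant small pair $(k,q)$, and the proof closes with a finite numerical check of~\eqref{ineq:3col}.
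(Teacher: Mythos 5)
Your proposal is correct and follows essentially the same route as the paper: feed the parameters $\maxfam = \qbinom{n-t}{k-t}$ (Hsieh, Frankl--Wilson), $\twomax = \qbinom{n-t-1}{k-t-1}$, $\nummax = \qbinom{n}{k}^{\binom{2(k-t)+1}{k-t}}$ (Balogh et al.), and $\hmsize$ from Ellis (for $t \ge 2$) or Blokhuis et al. (for $t=1$) into Theorem~\ref{thm:3col}, dispose of part~(i) by comparing exponential and linear growth in $n$, and reduce part~(ii) to an explicit inequality of the form $q^{(k-1)(n-k)} - q^k - O(4^k k n \lg q) > 0$ verified at the claimed thresholds via monotonicity in $n$ and $k$ plus a finite check for small $(k,q)$. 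The only difference is one of completeness rather than method: the paper carries out the base case $k=4$, $n=2k+1$ explicitly and invokes monotonicity, whereas you defer the numerical verification, but the quantities and regimes you identify are exactly the right ones.
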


Note that when $t = 1$, the bounds of $n \ge 2k + 1$ for $q \ge 3$ and $n \ge 2k + 2$ for $q = 2$ are tight, as the stars are not the unique extremal families for smaller values of $n$.  When $t \ge 2$, we require $n$ to be large so that we may apply the stability result of Ellis~\cite{E16}.  Ellis conjectured that his result should hold for all $n \ge 2k + 1$, which, if true, would allow us to obtain similarly tight bounds for all $t$.

\begin{proof}[Proof of Corollary~\ref{cor:3vector}]
Before we begin our calculations, it will be useful to have some bounds on the Gaussian binomial coefficient.  Observe that
\begin{equation} \label{ineq:qbounds}
 q^{k(n-k)} \le \qbinom{n}{k} = \prod_{i=0}^{k-1} \frac{q^{n-i}-1}{q^{k-i}-1} = q^{k(n-k)} \prod_{i=0}^{k-1} \frac{q^{k-i} - q^{k-n}}{q^{k-i} - 1} \le q^{k(n-k)} \prod_{i=0}^{k-1} \frac{q^{k-i}}{q^{k-i} - 1} \le 4 q^{k(n-k)}.
\end{equation}

We first handle the case when integers $2 \le t < k$ and a prime power $q$ are fixed, and $n$ is sufficiently large.  We have $\maxfam = \qbinom{n-t}{k-t}$ and $\hmsize = \left(1 + O(q^{-n}) \right) \max \left( \qbinom{t+2}{1}, \qbinom{k-t+1}{1} \right) \qbinom{n-t-1}{k-t-1}$ from the previously cited results.  The intersection of two distinct $t$-stars fixes at least a $(t+1)$-dimensional subspace, and hence $\twomax = \qbinom{n-t-1}{k-t-1}$.  We therefore have $\max \left( \hmsize, \twomax \right) \le 2 \qbinom{k+1}{1} \qbinom{n-t-1}{k-t-1}$.  Finally, as discussed above, the number of maximal $t$-intersecting families can be bounded by $\nummax = \qbinom{n}{k}^{\binom{2(k-t)+1}{k-t}}$.

To settle this case, we verify~\eqref{ineq:3col} holds.  Indeed,
\begin{align*}
\maxfam - \max \left( \hmsize, \twomax \right) - &\frac{6 \lg \nummax}{2 \lg 3 - 3} \ge \qbinom{n-t}{k-t} - 2 \qbinom{k+1}{1} \qbinom{n-t-1}{k-t-1} - 36 \binom{2(k-t)+1}{k-t} \lg \qbinom{n}{k} \\
	&= \left( \frac{q^{n-t} - 1}{q^{k-t} - 1} - 2 \qbinom{k+1}{1} \right) \qbinom{n-t-1}{k-t-1} - 36 \binom{2(k-t)+1}{k-t} \lg \qbinom{n}{k} \\
	&\ge \left( q^{n-k} - 4 q^k \right) q^{(k-t-1)(n-k)} - 36 \cdot 4^{k-t} \left( 2 + k (n-k) \lg q \right) > 0
\end{align*}
for $n$ sufficiently large.  Hence, by Theorem~\ref{thm:3col}, the families maximising the number of $(3,t)$-colourings are precisely the $t$-stars.

When $t = 1$, we instead have $\maxfam = \qbinom{n-1}{k-1}$ and $\hmsize = \qbinom{n-1}{k-1} - q^{k(k-1)} \qbinom{n-k-1}{k-1} + q^k$.  The intersection of two stars contains $\twomax = \qbinom{n-2}{k-2}$ subspaces, which is slightly smaller than $\hmsize$.  Hence $\max \left( \hmsize, \twomax \right) = \qbinom{n-1}{k-1} - q^{k(k-1)} \qbinom{n-k-1}{k-1} + q^k$.  Finally, the number of maximal families can be bounded by $\nummax = \qbinom{n}{k}^{\binom{2k-1}{k-1}}$.  Putting these parameters together, we have
\begin{align} \label{calc:3vector}
	\maxfam - \max \left( \hmsize, \twomax \right) - \frac{6 \lg \nummax}{2 \lg 3 - 3} &\ge q^{k(k-1)} \qbinom{n-k-1}{k-1} - q^k - 36 \binom{2k-1}{k-1} \lg \qbinom{n}{k}  \notag \\
	&\ge q^{k(k-1)} \cdot q^{(k-1)(n-2k)} - q^k - 9 \cdot 4^k \left( 2 + k(n-k) \lg q \right) \notag \\
	&= q^{(k-1)(n-k)} - q^k - 9 \cdot 4^k \left( 2 + k(n-k) \lg q \right).
\end{align}

For fixed $k$ and $q$, suppose~\eqref{calc:3vector} is positive for some $n = n_0$.  Increasing $n$ by one can at most double the terms being subtracted, while the positive term increases by a factor of $q^{k-1} \ge 2$.  Hence~\eqref{calc:3vector} must remain positive for all $n \ge n_0$.

Suppose we first wish to determine for which values of $k$ and $q$ it suffices to take $n \ge 2k+1$.  By our above comment, it suffices to check if~\eqref{calc:3vector} is positive when $n = 2k+1$.  Making the substitution, the expression simplifies to
\begin{equation} \label{calc:3vector2}
q^{k^2 - 1} - q^k - 9 \cdot 4^k \left( 2 + k(k+1) \lg q \right).
\end{equation}

Now suppose~\eqref{calc:3vector2} is positive for $k = k_0$.  If we increase $k$ by one, the positive term increases by a factor of at least $q^{2k + 1}$, while the terms being subtracted only increase by factors of at most $q$ and $8$ respectively, and hence~\eqref{calc:3vector2} remains positive for all $k \ge k_0$.

If we take $k = k_0 = 4$,~\eqref{calc:3vector2} further simplifies to $q^{15} - q^4 - 4608 \left( 1 + 10 \lg q \right)$, which is positive for all $q \ge 3$.  Hence, if $k \ge 4$ and $q \ge 3$,~\eqref{calc:3vector} is positive for all $n \ge 2k+1$.  By Theorem~\ref{thm:3col}, it follows that the stars are the only families maximising the number of $(3,1)$-colourings.

The remaining cases all follow from similar calculations.
\end{proof}

\subsection{Set families} \label{subsec:3colsets}

We conclude this section by presenting our results for set families.  The extremal result is the Erd\H{o}s--Ko--Rado theorem~\cite{EKR61}, given in Theorem~\ref{thm:ekr}, which shows that if $n > (t+1)(k-t+1)$, the largest $t$-intersecting set families in $\binom{[n]}{k}$ are the $t$-stars, which have size $\binom{n-t}{k-t}$.

When $t= 1$, a stability result was given by Hilton and Milner~\cite{HM67}, who proved that the largest intersecting family not contained in a star has size $\binom{n-1}{k-1} - \binom{n-k-1}{k-1} + 1$.  The analogous question for larger $t$ was resolved by Frankl~\cite{F78b} and Ahlswede and Khachatrian~\cite{AK96}.  When $n > (t+1)(k-t+1)$, the largest intersecting family not contained in a $t$-star has size
\[ \card{\cF} = \begin{cases}
	\binom{n-t}{k-t} - \frac{n - (t+1)(k-t+1)}{n-t-1} \binom{n-t-1}{k-t} &\mbox{ if } k \le 2t+1, \\
	\binom{n-t}{k-t} - \min \left( \frac{n - (t+1)(k-t+1)}{n-t-1} \binom{n-t-1}{k-t}, \binom{n-k-1}{k-t} - t \right) & \mbox{ if } k \ge 2t+2.
\end{cases} \]

A bound on the number of maximal $t$-intersecting families was given by Balogh et al.~\cite{BDDLS15}, who showed there are at most $\binom{n}{k}^{\binom{2(k-t)+1}{k-t}}$ such families.

Armed with these results, we can use Theorem~\ref{thm:3col} to bound the number of $(3,t)$-colourings of $k$-uniform set families.

\begin{cor} \label{cor:3sets}
There is some absolute constant $n_0$ such that for integers $1 \le t < k$, if $n \ge n_0$ and $n \ge (t+1)(k-t+1)+\eta_{k,t}$, where
\[ \eta_{k,t} = \begin{cases}
	1 & \textrm{if } \min( t, k-t ) \ge 3, \\
	k + 10 \ln k & \textrm{if } t = 1, \\
	10000k & \textrm{if } t = 2 \textrm{ or } k-t \le 2,
\end{cases} \]
a set family $\cF \subseteq \binom{[n]}{k}$ can have at most $3^{\binom{n-t}{k-t}}$ $(3,t)$-colourings, with equality if and only if $\cF$ is a $t$-star.
\end{cor}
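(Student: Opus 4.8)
The plan is to apply Theorem~\ref{thm:3col} directly, so that the whole argument reduces to reading off the four parameters $\maxfam,\hmsize,\twomax,\nummax$ from the cited extremal, stability and counting results and then verifying the single inequality~\eqref{ineq:3col}. The Erd\H{o}s--Ko--Rado theorem (Theorem~\ref{thm:ekr}) gives $\maxfam=\binom{n-t}{k-t}$. The centres of two distinct $t$-stars together span at least $t+1$ elements, so their common members are exactly the $k$-sets containing a fixed $(t+1)$-set, whence $\twomax=\binom{n-t-1}{k-t-1}$. The Balogh et al. bound furnishes $\nummax=\binom{n}{k}^{\binom{2(k-t)+1}{k-t}}$, and the Frankl/Ahlswede--Khachatrian stability result supplies $\hmsize$.

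Writing $s=n-(t+1)(k-t+1)\ge\eta_{k,t}$ for the slack in the EKR bound, the stability result expresses the gap as
\[
\maxfam-\hmsize=\begin{cases}
\frac{s}{n-t-1}\binom{n-t-1}{k-t} & \text{if } k\le 2t+1,\\
\min\!\left(\frac{s}{n-t-1}\binom{n-t-1}{k-t},\ \binom{n-k-1}{k-t}-t\right) & \text{if } k\ge 2t+2.
\end{cases}
\]
In both cases this gap is at most $\binom{n-t-1}{k-t}=\maxfam-\twomax$ (using $s\le n-t-1$), so $\hmsize\ge\twomax$ and therefore $\max(\hmsize,\twomax)=\hmsize$. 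As in Corollary~\ref{cor:3perm}, $\frac{6}{2\lg 3-3}\le 36$, so~\eqref{ineq:3col} reduces to the claim that $\maxfam-\hmsize$ exceeds $36\binom{2(k-t)+1}{k-t}\lg\binom{n}{k}\le 36\binom{2(k-t)+1}{k-t}\,k\lg n$.

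It then remains to compare the gap with this logarithmic term, with casework matching the three regimes of $\eta_{k,t}$; exactly as in the proof of Corollary~\ref{cor:3vector}, it suffices to check the inequality at the threshold value $n=(t+1)(k-t+1)+\eta_{k,t}$ and then invoke monotonicity, since raising $n$ enlarges the gap multiplicatively while $\lg\binom{n}{k}$ grows only additively. The main obstacle is the regime $\min(t,k-t)\ge 3$, where only unit slack ($\eta_{k,t}=1$) is allowed and the gap is smallest: here I would bound it below by $\frac{1}{n}\binom{n-t-1}{k-t}$ or by $\binom{n-k-1}{k-t}-t$, and observe that at the threshold the top of the relevant binomial is $\Theta(t(k-t))$, so the hypothesis $t\ge 3$ pushes it well above $2(k-t)+1$ and produces a saving exponential in $k-t$ that absorbs the factor $\binom{2(k-t)+1}{k-t}$ together with all polynomial factors, while the absolute bound $n\ge n_0$ disposes of the finitely many small pairs $(t,k-t)$ not reached by monotonicity. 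In the degenerate regimes the gap is a binomial of too low a degree (for instance $\binom{n-k-1}{k-1}-1$ when $t=1$) to win at unit slack, so one compensates with the enlarged slack $s\ge\eta_{k,t}$: taking $\eta_{k,t}$ of order $k+\ln k$ (for $t=1$) or of order $k$ (for $t=2$ or $k-t\le 2$) multiplies the gap by a polynomial-in-$k$ factor large enough to overtake the logarithmic term, after which Theorem~\ref{thm:3col} yields the stated conclusion.
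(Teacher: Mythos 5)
Your overall strategy --- apply Theorem~\ref{thm:3col}, read off $\maxfam$, $\hmsize$, $\twomax$, $\nummax$ from the cited extremal, stability and counting results, note that $\max(\hmsize,\twomax)=\hmsize$, and then verify~\eqref{ineq:3col} by casework matching the three regimes of $\eta_{k,t}$ --- is exactly the paper's, and your identification of the parameters and of the gap $\maxfam-\hmsize$ is correct. The genuine gap is in the mechanism you propose for the verification. The ``check at the threshold $n=(t+1)(k-t+1)+\eta_{k,t}$ and extend by monotonicity'' device is imported from Corollary~\ref{cor:3vector}, but it does not transfer: there, incrementing $n$ multiplies the positive term by $q^{k-1}\ge 2$ while at most doubling the subtracted terms, whereas for ordinary binomial coefficients incrementing $n$ multiplies $\binom{n-k-1}{k-t}$ by only $1+O(k/n)$, so no such step-by-step comparison is available. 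More seriously, with the stated parameter values inequality~\eqref{ineq:3col} actually \emph{fails} at the threshold for a large set of pairs in the hardest regime $\min(t,k-t)\ge 3$, $\eta_{k,t}=1$. Take $t=100$, $k=103$, so the threshold is $n=405$: the gap equals $\frac{1}{304}\binom{304}{3}\approx 1.5\times 10^{4}$, while $\frac{6}{2\lg 3-3}\binom{7}{3}\lg\binom{405}{103}>35\cdot 35\cdot 195>2\times 10^{5}$. Your ``saving exponential in $k-t$'' is $\bigl(\tfrac{t(k-t)+1}{2(k-t)+1}\bigr)^{k-t}\approx (t/2)^{k-t}$, which for fixed small $k-t$ is merely polynomial in $t$ and loses to $\lg\binom{n}{k}=\Theta(t)$ at the threshold unless $t$ is in the thousands.

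The set of exceptional pairs is in fact finite, but your fallback that ``$n\ge n_0$ disposes of the finitely many small pairs not reached by monotonicity'' cannot carry the argument as written: the exceptions include pairs with $t$ in the thousands (hardly small), recognising them as a finite set already requires essentially the full computation, and without a valid monotonicity lemma there is no well-defined collection of pairs that are ``reached''. The paper never evaluates~\eqref{ineq:3col} at the threshold; it proves the inequality directly for all $n\ge\max\bigl(n_0,(t+1)(k-t+1)+\eta_{k,t}\bigr)$, using the absolute constant $n_0$ as an essential ingredient in \emph{every} case, via a split on the size of $k-t$ (or $k$) relative to a power of $n$ --- e.g.\ $k-t\le n^{1/4}$ versus $k-t>n^{1/4}$ when $\min(t,k-t)\ge 3$, and similarly with $n^{1/2}$ and $n^{1/3}$ in the other regimes. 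Redoing your case analysis along those lines closes the gap; the rest of your outline is sound.
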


\begin{proof}
Before delving into precise calculations, we first show that the inequality~\eqref{ineq:3col} is easily satisfied when $n$ is sufficiently large in terms of $k$ and $t$.  The intersection of two stars has size at most $\binom{n-t-1}{k-t-1}$, which will be a lower-order term.  Hence, using the extremal and stability results cited above, we have $\maxfam - \max (\hmsize, \twomax) = \min\left( \frac{n-(t+1)(k-t+1)}{n-t-1} \binom{n-t-1}{k-t}, \binom{n-k-1}{k-t} - t \right) = \Omega(n^{k-t})$. On the other hand, $\lg \nummax \le \binom{2(k-t)+1}{k-t} \lg \binom{n}{k} = O \left( \lg n \right) = o \left( n^{k-t} \right)$.  Hence~\eqref{ineq:3col} is satisfied, and we deduce that when $n$ is sufficiently large in terms of $k$ and $t$, the $t$-stars are precisely the families that maximise the number of $(3,t)$-colourings.  By more carefully analysing the quantities in question, we shall now show that this same conclusion holds with sharp dependency of $n$ on $k$ and $t$.  Indeed, when $\min( t, k-t) \ge 3$, we cannot reduce our bound on $n$ any further, as then we will have non-trivial $t$-intersecting families that are the same size as the $t$-stars.

We begin with the case $\min(t, k-t) \ge 3$.  Recall that we have $$\maxfam - \max( \hmsize, \twomax) = \maxfam - \hmsize = \min \left( \frac{n - (t+1)(k-t+1)}{n-t-1} \binom{n-t-1}{k-t}, \binom{n-k-1}{k-t} - t \right), $$ which we may bound from below by 
$\frac{n - (t+1)(k-t+1)}{n-t-1} \binom{n-k-1}{k-t} \geq \frac{1}{n} \binom{n-k-1}{k-t}$.  On the other hand, $\lg \nummax \le \binom{2(k-t) + 1}{k-t} \lg \binom{n}{k} \le n \binom{2(k-t)+1}{k-t}$.  Hence we have
\begin{align*}
\maxfam - \max \left( \hmsize, \twomax \right) - \frac{6 \lg \nummax}{2 \lg 3 - 3} &\ge \frac{1}{n} \binom{n-k-1}{k-t} - 36 n \binom{2(k-t)+1}{k-t} \\
	&\ge \left( \frac{1}{n} \left( \frac{n-k-1}{2(k-t)+1} \right)^{k-t} - 36 n \right) \binom{2(k-t)+1}{k-t},
\end{align*}
since $\binom{a}{r} \ge \left( \frac{a}{b} \right)^r \binom{b}{r}$ for $a \ge b$.  We now make some simplifications.  As $3\le t \le k-3$, we have $n \ge (t+1)(k-t+1) + 1 \ge 4 k - 7$, which, together with the fact that $k = t + (k-t) \ge 6$, gives $\frac{9}{20} n> k+1$.  Similarly, $\frac{1}{2}n \ge 2(k-t)+1$.  Hence $\frac{n-k-1}{2(k-t)+1} \ge \frac{11n}{40(k-t)+20} \ge \frac{11}{10}$.

If $k-t \le n^{1/4}$, $\frac{n-k-1}{2(k-t)+1} = \Omega \left(n^{3/4} \right)$.  Since $k-t \ge 3$, it follows that $\frac{1}{n} \left( \frac{n-k-1}{2(k-t)+1} \right)^{k-t} - 36n = \Omega( n^{5/4})$.  Otherwise, if $k - t > n^{1/4}$, we have $\left(\frac{n-k-1}{2(k-t)+1}\right)^{k-t} \ge 1.1^{n^{1/4}}$, and so $\frac{1}{n} \left( \frac{n-k-1}{2(k-t)+1} \right)^{k-t} - 36n = \Omega \left( 1.1^{n^{1/4}} n^{-1} \right)$.  In either case, provided $n$ is larger than some absolute constant, we will have $\maxfam - \max( \hmsize, \twomax ) - \frac{6 \lg \nummax}{2 \lg 3 - 3} > 0$, thus establishing~\eqref{ineq:3col}.

We next handle the case $t = 1$, for which we have $\maxfam - \max( \hmsize, \twomax ) = \binom{n-k-1}{k-1} - 1$ and $\lg \nummax \le \binom{2k-1}{k-1} \lg \binom{n}{k} < n \binom{2k-1}{k-1}$.  It therefore follows that
\begin{align*}
	\maxfam - \max ( \hmsize, \twomax) - \frac{6 \lg \nummax}{2 \lg 3 - 3} &\ge \binom{n-k-1}{k-1} - 36 n \binom{2k-1}{k-1} \\
	&\ge \left( \left( 1 + \frac{n - 3k}{2k-1} \right)^{k-1} - 36 n \right) \binom{2k-1}{k-1}.
\end{align*}
 Note that we may assume $k \ge 4$, since if $k \le 3$ and $t = 1$, then we know from our initial remarks that~\eqref{ineq:3col} is satisfied for large enough $n$.  If $k \le n^{1/2}$, then $\left( 1 + \frac{n-3k}{2k-1} \right)^{k-1} = \Omega(n^{3/2})$, and so~\eqref{ineq:3col} is certainly satisfied for large $n$.

On the other hand, if $k > n^{1/2}$, we note that $n - 3k \ge 10 \ln k$, since $\eta_{k,1} = k + 10 \ln k$.  As $10 \ln k < 2.5(2k-1)$, and $1 + x > e^{x/2}$ for $0 \le x \le 2.5$, we observe that
\[ \left(1 + \frac{n - 3k}{2k-1}\right)^{k-1} > e^{10 (k-1) \ln k / 2(2k-1)} > k^{2.1} = \omega(n), \]
since $\frac{k-1}{2k-1} \ge \frac37$ and $k > n^{1/2}$.  Hence we again see that~\eqref{ineq:3col} is satisfied, provided $n$ is large enough.

Finally, we consider the remaining cases, when $t = 2$ or $k-t \le 2$.  Once more, we have $\maxfam - \max ( \hmsize, \twomax ) = \min \left( \frac{n-(t+1)(k-t+1)}{n-t-1} \binom{n-t-1}{k-t}, \binom{n-k-1}{k-t} - t \right) \ge \frac{n-(t+1)(k-t+1)}{n-t-1} \binom{n-k-1}{k-t}$.  In this range we are taking $\eta_{k,t} = 10000k > (t+1)(k-t)$, as a result of which $n - t - 1 \ge (t+1)(k-t) + \eta_{k,t} > 2(t+1)(k-t)$, and thus $\frac{n-(t+1)(k-t+1)}{n-t-1} = 1 - \frac{(t+1)(k-t)}{n-t-1} > \frac12$.  Putting these bounds together, $\maxfam - \max ( \hmsize, \twomax ) > \frac12 \binom{n-k-1}{k-t}$.

On the other hand, as before, we have $\lg \nummax \le \binom{2(k-t)+1}{k-t} \lg \binom{n}{k} \le H \left( \frac{k}{n} \right)n \binom{2(k-t)+1}{k-t} $, where $H(x) = - x \lg x - (1-x) \lg (1-x)$ is the binary entropy function.  When $k - t = 1$, this gives
\[ \maxfam - \max ( \hmsize, \twomax ) - \frac{6 \lg \nummax}{2 \lg 3 - 3} > \frac12 (n - k - 1) - 108 H \left( \frac{k}{n} \right) n.\]

As $n > \eta_{k,t} = 10000k$, and $108 H(x) < \frac16$ for $x \le \frac{1}{10000}$, we certainly have
\[ \frac12 (n - k - 1) - 108 H \left( \frac{k}{n} \right) n > \frac13 n - \frac16 n = \frac16 n > 0, \]
and thus~\eqref{ineq:3col} is satisfied when $k - t = 1$.

For $k - t \ge 2$, we may upper bound the entropy by $1$.  We get
\[ \maxfam - \max ( \hmsize, \twomax ) - \frac{6 \lg \nummax}{2 \lg 3 - 3} \ge \left( \frac12 \left( \frac{n-k-1}{2(k-t) + 1} \right)^{k-t} - 36 n \right) \binom{2(k-t)+1}{k-t}. \]

If $2 \le k-t \le n^{1/3}$, $\frac{n-k-1}{2(k-t)+1} = \Omega \left( n^{2/3} \right)$, and hence $\left( \frac{n-k-1}{2(k-t)+1} \right)^{k-t} = \Omega \left( n^{4/3} \right)$, and thus this expression is positive when $n$ is suitably large.  Otherwise, note that $\frac{n-k-1}{2(k-t)+1} \ge \frac{n-k}{2k} > 2$ (with quite a lot of room to spare).  Hence, if $k-t > n^{1/3}$, we have $\left( \frac{n-k-1}{2(k-t)+1} \right)^{k-t} > 2^{n^{1/3}} = \omega(n)$, and~\eqref{ineq:3col} is again satisfied for large enough $n$.

This completes the case analysis, and hence the proof of Corollary~\ref{cor:3sets}
\end{proof}


\section{Multicoloured families} \label{sec:multicol}

In this section we will investigate the number of $(r,t)$-colourings of families for $r \ge 4$, obtaining a general stability result describing the large-scale structure of near-optimal families when a certain condition is met.  We shall then apply this result to the settings of set families and vector spaces and obtain more precise characterisations of the optimal families.  We begin, though, with an optimisation problem that motivates the construction for the lower bound.

\subsection{An optimisation problem} \label{subsec:multicolopt}

We saw in Section~\ref{sec:3col} that the number of $(3,t)$-colourings was maximised by the largest $t$-intersecting families.  When there are more colours available, a largest $t$-intersecting family has $r^{\maxfam}$ $(r,t)$-colourings, since each of the $\maxfam$ members of the family can receive any of the $r$ colours.  However, when $r \ge 4$, we can do better by distributing the colours between a larger number of extremal $t$-intersecting families.  The following optimisation problem, earlier discussed in~\cite{HKL12}, suggests that it is optimal to take $\ceil{r/3}$ extremal $t$-intersecting families and assign three colours to most of them.

\begin{lemma} \label{lem:opt}
Let $r \ge 0$ be an integer.  Consider the maximisation problem below, denoted $\MAX(r)$,
\begin{align*}
\underset{\substack{0 \le s \le r\\ \vec{m} = (m_1, m_2, \hdots, m_s) \in \mathbb{N}^s}}{\textup{maximise}} & \OBJ(\vec{m}) = \prod_{i = 1}^s m_i \\
\textup{subject to} \quad & \sum_{i=1}^s m_i = r,
\end{align*}
and let $\OPT(r)$ denote its optimal value.  The following statements hold.
\begin{itemize}
	\item[(i)] For a feasible vector $\vec{m}$, either $\OBJ(\vec{m}) = \OPT(r)$ or $\OBJ(\vec{m}) \le \frac89 \OPT(r)$.
	\item[(ii)] For $r \ge 2$, all optimal solutions take one of the following forms:
	\begin{itemize}
		\item[(a)] $r \equiv 0 \pmod{3}$: $s = r/3$, and all coordinates of $\vec{m}$ are equal to $3$.
		\item[(b)] $r \equiv 1 \pmod{3}$: $s = \floor{ r/3 }$, with one coordinate of $\vec{m}$ equal to $4$ and all others to $3$, or $s = \ceil{r/3}$, with two coordinates of $\vec{m}$ equal to $2$ and all others to $3$.
		\item[(c)] $r \equiv 2 \pmod{3}$: $s = \ceil{r/3}$, with one coordinate of $\vec{m}$ equal to $2$ and all others to $3$.
	\end{itemize}
\end{itemize}
\end{lemma}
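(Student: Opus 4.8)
The plan is to treat this as the classical problem of splitting the integer $r$ into positive parts so as to maximise their product, and to extract the quantitative gap in (i) from the very exchange arguments that pin down the optimisers in (ii).

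For part (ii) I would run the standard local-move analysis on an optimal vector $\vec m$. First, no part equals $1$: since $r \ge 2$ forces $s \ge 2$ whenever a $1$ is present, deleting a coordinate $m_i = 1$ and increasing some other coordinate by $1$ keeps the sum fixed while strictly increasing the product, contradicting optimality. Second, no part is at least $5$: replacing a part $m \ge 5$ by the two parts $3$ and $m-3$ preserves the sum and multiplies the product by $\tfrac{3(m-3)}{m} > 1$. Hence every part lies in $\{2,3,4\}$, and since $4 = 2+2$ with $2\cdot 2 = 4$, I may pass to the canonical form in which all parts are $2$ or $3$ without changing the product. Finally, at most two parts equal $2$, for otherwise replacing three $2$'s by two $3$'s fixes the sum and raises the product from $8$ to $9$. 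Writing $a$ for the number of $2$'s, the constraint $2a \equiv r \pmod 3$ together with $a \le 2$ determines $a$ uniquely as a function of $r \bmod 3$, namely $a = 0,2,1$ in the residue classes $0,1,2$; translating back and allowing a pair of $2$'s to recombine into a $4$ yields exactly the listed forms (a)--(c), with $\OPT(r)$ equal to $3^{r/3}$, $4\cdot 3^{(r-4)/3}$, $2\cdot 3^{(r-2)/3}$ respectively.

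For part (i) I would split a general feasible $\vec m$ into three cases according to which obstruction it exhibits, and show each costs a factor well below $\tfrac89$. If some part is $\ge 5$, a single split multiplies the product by at least $\tfrac{3(m-3)}{m} \ge \tfrac65$ while staying feasible, so $\OBJ(\vec m) \le \tfrac56 \OPT(r) < \tfrac89 \OPT(r)$. If some part equals $1$, I use the growth estimate $\OPT(j) \ge \tfrac43 \OPT(j-1)$ for $j \ge 2$, read off from the closed forms above (the minimal consecutive ratio being $\tfrac43$, attained when $j \equiv 1 \pmod 3$): removing the $k \ge 1$ ones leaves parts summing to $r-k$ whose product is a feasible value of $\MAX(r-k)$, hence at most $\OPT(r-k) \le (\tfrac34)^k \OPT(r) \le \tfrac34 \OPT(r)$, with the degenerate all-ones case $\OBJ(\vec m) = 1$ handled directly since $\OPT(r) \ge 2$. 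In the remaining case every part is in $\{2,3,4\}$; in canonical $\{2,3\}$ form the number $a$ of $2$'s satisfies $a \equiv a_0 \pmod 3$ for the optimal $a_0 \le 2$, and trading each surplus block of three $2$'s against two $3$'s multiplies the product by $\tfrac{2^3}{3^2} = \tfrac89$. Thus a non-optimal such vector has $a = a_0 + 3j$ with $j \ge 1$, giving $\OBJ(\vec m) = (\tfrac89)^j \OPT(r) \le \tfrac89 \OPT(r)$. Combining the three cases proves (i).

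The routine local moves are easy; the step needing the most care --- and where the constant $\tfrac89$ really originates --- is the last one, the observation that the only way to be non-optimal while avoiding $1$'s and parts $\ge 5$ is to carry a surplus block of three $2$'s in place of two $3$'s, each such block contributing exactly the factor $\tfrac89$. The one genuinely extra ingredient is the monotone growth bound $\OPT(j) \ge \tfrac43 \OPT(j-1)$ used to dispose of parts equal to $1$, and I expect verifying that $\tfrac43$ is the true minimal consecutive ratio to be the main (small) obstacle.
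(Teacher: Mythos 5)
Your proof is correct and follows essentially the same route as the paper: the identical local moves (splitting a part $\ge 5$ into $3$ and $m-3$, absorbing $1$'s, converting $4$'s to $2+2$, and trading three $2$'s for two $3$'s) establish (ii). The only cosmetic difference is in (i), where the paper observes that each such move already gains a factor of at least $\tfrac98$, whereas you derive the gap case-by-case (including the $\OPT(j)\ge\tfrac43\OPT(j-1)$ growth bound to dispose of $1$'s); both are valid and the $\tfrac89$ constant arises from the same $2^3$ versus $3^2$ trade in each.
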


\begin{proof}
We will prove the lemma by showing that, unless $\vec{m}$ is as in (a), (b) or (c) of (ii), we can modify $\vec{m}$ to increase its objective value by a factor of at least $\frac98$.

First suppose $m_i \ge 5$ for some $i$.  Consider a new vector $\vec{m}'$, where we add a new coordinate equal to $3$, and replace $m_i$ by $m_i - 3$.  The vector $\vec{m}'$ is clearly feasible, and we have $\OBJ(\vec{m}') = \frac{3(m_i-3)}{m_i} \OBJ(\vec{m}) \ge \frac65 \OBJ(\vec{m}) > \frac98 \OBJ(\vec{m})$.

Hence we may assume every coordinate is at most $4$.  Now suppose $m_i = 1$ for some $i$, and let $j \neq i$ represent some other coordinate.  Let $\vec{m}'$ be the vector formed by removing the $i$th coordinate, and replacing $m_j$ with $m_j +1$.  $\vec{m}'$ is again feasible, with $\OBJ(\vec{m}') = \frac{m_j + 1}{m_j} \OBJ(\vec{m}) \ge \frac54 \OBJ(\vec{m}) > \frac98 \OBJ(\vec{m})$.

Thus every coordinate must be either $2$, $3$ or $4$.  Replacing every $4$ with two coordinates both equal to $2$ preserves feasibility without changing its objective value.  Suppose now we have at least three coordinates equal to $2$.  Form a new vector $\vec{m}'$ by replacing those three coordinates with two coordinates equal to $3$.  $\vec{m}'$ is still feasible, and $\OBJ(\vec{m}') = \frac98 \OBJ(\vec{m})$.  Hence there can be at most two coordinates equal to $2$, and, up to permutation of coordinates, there is only one option for every $r$.

This implies the optimal solutions have all coordinates equal to $3$, except for perhaps one or two coordinates equal to $2$, or one coordinate equal to $4$, giving the characterisation in $(ii)$.
\end{proof}

\subsection{A structural result} \label{subsec:multicolgen}

We now proceed to the main result of this section, where we will show that, provided a single inequality holds, any family with close to the maximum number of $(r,t)$-colourings must be the union of a given number of extremal $t$-intersecting families.  This rough structural characterisation will allow us to classify the optimal families more precisely when applied to specific settings in later subsections.

We start, though, with a lower bound on the number of $(r,t)$-colourings a family can have.  Recall that $\maxfam$ is the largest $t$-intersecting family, and $\twomax$ is the largest possible intersection of two extremal $t$-intersecting families.  Set $s = \ceil{r/3}$, and let $\cF = \cI_1 \cup \hdots \cup \cI_s$, where each $\cI_i$ is a distinct extremal $t$-intersecting family.

Now let $\vec{m} = (m_1, \hdots, m_s)$ be an optimal solution to $\MAX(r)$, and partition the $r$ colours such that $m_i$ colours are assigned to the family $\cI_i$.  Consider the colourings obtained by colouring each $F \in \cF$ with a colour assigned to one of the families $\cI_i$ containing $F$.  Each such colouring is an $(r,t)$-colouring, as every colour class is contained in some $\cI_i$.  For each $i \in [s]$, there are at least $\maxfam - (s-1) \twomax > \maxfam - \frac{r}{3} \twomax$ members in $\cI_i \setminus \left( \cup_{j \neq i} \cI_j \right)$, each of which has exactly $m_i$ colours available to it.  Hence the number of $(r,t)$-colourings of $\cF$ is at least
\begin{equation} \label{ineq:rcollowbound}
\prod_i m_i^{\maxfam - r \twomax / 3} = \OPT(r)^{\maxfam - r \twomax / 3}.
\end{equation}

In what follows, we shall show that any family with more than three-fourths as many $(r,t)$-colourings as in~\eqref{ineq:rcollowbound} must be the union of extremal $t$-intersecting families.  Moreover, we shall in fact prove that most colourings must be of the above form.  Recall that an $(r,t)$-colouring of a set family can be mapped to a sequence $(\cM_1, \cM_2, \hdots, \cM_r)$, where $\cM_i$ is a maximal $t$-intersecting family containing the $i$th colour class.  Given a sequence of $r$ maximal $t$-intersecting families, let $(\cM'_1, \hdots, \cM'_s)$ denote the distinct families in the sequence, and $\vec{m} = (m_1, \hdots, m_s)$ the multiplicities with which they appear.  The following definition characterises the typical $(r,t)$-colourings of near-optimal families.

\begin{defn}[Typical colourings]
We say an $(r,t)$-colouring of $\cF$ is a \emph{typical colouring} if it maps to a sequence of maximal families where the multiplicity vector $\vec{m}$ is an optimal solution to $\MAX(r)$ and each $\cM'_i$ is an extremal $t$-intersecting family with $\card{ \cF \cap \cM'_i} > r \max ( \hmsize, \twomax )$.

Otherwise we call the colouring an \emph{atypical colouring}.
\end{defn}

Our main theorem now provides a condition that guarantees families close to being optimal are unions of extremal $t$-intersecting families, with the majority of their $(r,t)$-colourings being typical.

\begin{theorem} \label{thm:multicol}
Suppose $r \ge 4$ and $t \in \mathbb{N}$.  Let $\maxfam$ denote the size of the largest $t$-intersecting family, $\hmsize$ the size of the largest non-extremal maximal $t$-intersecting family, and suppose two distinct extremal $t$-intersecting families can have at most $\twomax$ members in common.  Suppose further that there are at least $\ceil{r/3}$ distinct extremal $t$-intersecting families, and at most $\nummax$ maximal $t$-intersecting families.  Define
\begin{equation} \label{defn:multicol}
\Delta = \left( \lg 9 - 3 \right) \maxfam - r \max \left( \hmsize, \twomax \right) - \frac{r \twomax}{3} \lg \OPT(r) - r \lg \nummax.
\end{equation}
If $\Delta \ge 2$, any family $\cF$ with more than three-fourths of the maximum number of $(r,t)$-colourings must be a union of either $\ceil{r/3}$ or, if $r \equiv 1 \pmod{3}$, possibly instead $\floor{r/3}$, extremal $t$-intersecting families.  Furthermore, the proportion of atypical colourings must be smaller than $2^{2-\Delta}/3$.
\end{theorem}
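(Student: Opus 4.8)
The plan is to follow the template of Theorem~\ref{thm:3col}: I map each $(r,t)$-colouring of $\cF$ to the sequence $\sigma=(\cM_1,\dots,\cM_r)$ of maximal $t$-intersecting families, where $\cM_i=\cM(\cI_i)$ is the fixed maximal family assigned to the $i$-th colour class $\cI_i$. Writing $(\cM_1',\dots,\cM_s')$ for the distinct families occurring in $\sigma$ and $\vec m=(m_1,\dots,m_s)$ for their multiplicities, the number $c(\sigma)$ of colourings mapping to $\sigma$ is at most $\prod_{F\in\cF}a_\sigma(F)$, where $a_\sigma(F)=\sum_{i:\,F\in\cM_i'}m_i$ is the number of colours available to $F$. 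There are at most $\nummax^r$ sequences, so, recalling the lower bound $\MAX\ge\OPT(r)^{\maxfam-r\twomax/3}$ from~\eqref{ineq:rcollowbound} and the hypothesis $c(\cF)>\tfrac34\MAX$, the whole statement reduces to the per-sequence estimate
\begin{equation*}
c(\sigma)\le\left(\tfrac89\OPT(r)\right)^{\maxfam}2^{r\max(\hmsize,\twomax)}\qquad\text{for every atypical }\sigma .
\end{equation*}
Indeed, summing this over the at most $\nummax^r$ atypical sequences gives $\sum_{\sigma\text{ atyp.}}c(\sigma)\le2^{-\Delta}\OPT(r)^{\maxfam-r\twomax/3}$ (the $\tfrac{r\twomax}3\lg\OPT(r)$ terms cancel against the definition~\eqref{defn:multicol} of $\Delta$), and dividing by $c(\cF)>\tfrac34\MAX$ bounds the atypical proportion by $\tfrac43 2^{-\Delta}=2^{2-\Delta}/3$.

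The engine is the exact identity
\begin{equation*}
\prod_{F\in\cF}a_\sigma(F)=\prod_{i=1}^s m_i^{\card{\cF\cap\cM_i'}}\cdot\prod_{F:\,d(F)\ge2}\frac{\sum_{i\in S(F)}m_i}{\prod_{i\in S(F)}m_i},
\end{equation*}
where $S(F)=\{i:F\in\cM_i'\}$ and $d(F)=\card{S(F)}$. The key elementary observation is that when all $m_i\ge2$ every overlap factor is at most $1$, since $\sum_{i\in S}m_i\le\prod_{i\in S}m_i$ whenever $\card S\ge2$ and all entries are at least $2$. By Lemma~\ref{lem:opt} an optimal $\vec m$ has all entries in $\{2,3,4\}$, so for an atypical $\sigma$ with optimal $\vec m$ the overlaps never help, and atypicality must come from some $\cM_{i_0}'$ being non-extremal or meeting $\cF$ in at most $r\max(\hmsize,\twomax)$ sets. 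In either case $\card{\cF\cap\cM_{i_0}'}\le r\max(\hmsize,\twomax)$, so the main product loses a factor $m_{i_0}^{\maxfam-r\max(\hmsize,\twomax)}\ge2^{\maxfam-r\max(\hmsize,\twomax)}$, giving $c(\sigma)\le\OPT(r)^{\maxfam}2^{-\maxfam}2^{r\max(\hmsize,\twomax)}$, which beats the target because $2^{-\maxfam}\le(8/9)^{\maxfam}$.

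The delicate case is a suboptimal $\vec m$, which may have entries equal to $1$; these are the only source of overlap factors exceeding $1$. Let $a$ be the number of ones. Bounding each overlap factor through $F$ by $k(F)+1\le2^{k(F)}$, where $k(F)$ is the number of multiplicity-one families containing $F$, and noting that each such family overlaps the others in at most $(s-1)\max(\hmsize,\twomax)$ sets, the total inflation is at most $2^{a(s-1)\max(\hmsize,\twomax)}\le2^{a(r-1)\max(\hmsize,\twomax)}$. Against this I set the loss in the main product: the entries that are at least $2$ sum to $r-a$, so $\prod_i m_i^{\card{\cF\cap\cM_i'}}\le\OPT(r-a)^{\maxfam}\le(3/4)^{a\maxfam}\OPT(r)^{\maxfam}$, using the elementary bound $\OPT(r)\ge(4/3)^a\OPT(r-a)$. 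The target then reduces to $(3/4)^{a\maxfam}2^{a(r-1)\max(\hmsize,\twomax)}\le(8/9)^{\maxfam}2^{r\max(\hmsize,\twomax)}$, i.e. to $\maxfam\big(a\lg\tfrac43-\lg\tfrac98\big)\ge\big(a(r-1)-r\big)\max(\hmsize,\twomax)$, which holds for all $a\ge1$ because $\Delta\ge2$ already forces $\maxfam\ge r\max(\hmsize,\twomax)/\lg\tfrac98$.

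I expect this suboptimal-multiplicity case to be the main obstacle: the naive bound on the overlap contribution, $r^{\card{\{F:d(F)\ge2\}}}$ with up to $\sim s^2\max(\hmsize,\twomax)$ overlapping sets, is far too weak, and the point is to notice that overlap inflation is driven \emph{only} by the wasted multiplicity-one colours---each contributing at most $2^{(r-1)\max(\hmsize,\twomax)}$---while each such colour simultaneously costs a factor $(4/3)^{\maxfam}$ in the main product, a trade the hypothesis $\Delta\ge2$ wins. Finally, since $\Delta\ge2$ makes the atypical proportion strictly below $1$, at least one typical colouring exists; its image supplies $s\in\{\floor{r/3},\ceil{r/3}\}$ extremal $t$-intersecting families with $\cF\subseteq\bigcup_{i=1}^s\cM_i'$, and the near-optimality of $\cF$ then upgrades this containment to equality, so that $\cF$ is exactly a union of the required number of extremal families.
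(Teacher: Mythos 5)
Your counting core is correct and is essentially the paper's own argument (its Claim on sequence multiplicities) in different packaging: your parameter $a$ is the paper's $s-s'$, and your trade of a gain of $(3/4)^{a\maxfam}$ in the main product against an inflation of at most $2^{a(r-1)\max(\hmsize,\twomax)}$ from the overlap factors is exactly the paper's iterative peeling of the uniquely-appearing maximal families, which pays a factor $2^{r\max(\hmsize,\twomax)}$ and gains a factor $(3/4)^{\maxfam}$ per peel. The other two cases (optimal $\vec{m}$ with some $\card{\cF\cap\cM'_{i_0}}\le r\max(\hmsize,\twomax)$, and suboptimal $\vec{m}$ with all $m_i\ge 2$, where Lemma~\ref{lem:opt}(i) gives the factor $\tfrac89$) are treated identically, your per-sequence target $\bigl(\tfrac89\OPT(r)\bigr)^{\maxfam}2^{r\max(\hmsize,\twomax)}$ is precisely $2^{-\Delta}\nummax^{-r}\OPT(r)^{\maxfam-r\twomax/3}$, and the union bound then yields the stated bound $2^{2-\Delta}/3$ on the atypical proportion.

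The gap is in your final sentence. A single typical colouring only gives the containment $\cF\subseteq\cM'_1\cup\dots\cup\cM'_s$ with $s\in\{\floor{r/3},\ceil{r/3}\}$ and each $\cM'_i$ extremal; the theorem asserts equality, and ``near-optimality upgrades this to equality'' conceals two arguments that must actually be supplied. First, one needs that \emph{every} typical colouring of $\cF$ determines the \emph{same} set of extremal families: if some typical colouring used a family $\cM''_1\notin\{\cM'_1,\dots,\cM'_s\}$, then $\card{\cF\cap\cM''_1}\le\sum_{i=1}^{s}\card{\cM'_i\cap\cM''_1}\le s\,\twomax<r\,\twomax$, contradicting the requirement $\card{\cF\cap\cM''_1}>r\max(\hmsize,\twomax)$ in the definition of typicality. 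Without this, the next step fails, because a typical colouring built on a different family set need not see a missing member at all. Second, one needs the quantitative doubling argument: if $G\in\cM'_i\setminus\cF$, then every typical colouring of $\cF$ offers $G$ at least $m_i\ge2$ admissible colours, so $\cF\cup\{G\}$ has at least twice as many typical colourings as $\cF$ has; since more than two-thirds of the colourings of $\cF$ are typical (this is where $\Delta\ge 2$ is used a second time), $c(\cF\cup\{G\})>\tfrac43\,c(\cF)$, contradicting the assumption that $c(\cF)$ exceeds three-fourths of the maximum. Merely observing that adding $G$ cannot decrease the count would not produce a contradiction. With these two steps inserted, your proof is complete and coincides with the paper's.
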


\begin{proof}
The existence of at least $\ceil{r/3}$ distinct extremal $t$-intersecting families implies that the lower bound of~\eqref{ineq:rcollowbound} holds, and hence any family $\cF$ with more than three-fourths the maximum number of colourings must have more than $(3/4) \OPT(r)^{\maxfam - r \twomax / 3}$ $(r,t)$-colourings.

We now seek to describe the structure of families with this many $(r,t)$-colourings.  Given a family $\cF$, let $c(\cM_1, \hdots, \cM_r)$ count the number of $(r,t)$-colourings of $\cF$ mapped to the sequence $(\cM_1, \hdots, \cM_r)$ of maximal $t$-intersecting families.  Recall that we let $(\cM'_1, \hdots, \cM'_s)$ denote the distinct maximal families in the sequence, with $\vec{m} = (m_1, \hdots, m_s)$ recording their multiplicities.  The following claim strongly restricts which sequences can arise from many $(r,t)$-colourings.

\begin{claim} \label{clm:multiplicities}
If, for some $i \in [s]$, $\card{\cF \cap \cM'_i} \le r \max \left( \hmsize, \twomax \right)$, or if $\vec{m}$ is not an optimal solution to $\MAX(r)$, then
\[ c(\cM_1, \hdots, \cM_r) < 2^{-\Delta} M^{-r} \OPT(r)^{\maxfam - r \twomax/3}. \]
\end{claim}

We will soon prove this claim, but first let us see how it implies Theorem~\ref{thm:multicol}.  Taking a union bound over all $\nummax^r$ sequences of maximal families, Claim~\ref{clm:multiplicities} implies the number of atypical colourings is smaller than $2^{-\Delta} \OPT(r)^{\maxfam - r\twomax / 3}$.  Thus, as $\cF$ must have more than $(3/4)\OPT(r)^{\maxfam - r\twomax / 3}$ $(r,t)$-colourings, this implies that fewer than a $2^{2-\Delta}/3$-proportion (and in particular, as $\Delta \ge 2$, fewer than a third) of the $(r,t)$-colourings of $\cF$ can be atypical, as claimed.

Now consider any typical $(r,t)$-colouring of $\cF$.  This gives a partition into colour classes $\cF = \cI_1 \sqcup \hdots \sqcup \cI_r$, where the classes are contained in the extremal $t$-intersecting families $\cM'_1, \hdots, \cM'_s$, appearing with multiplicities given by $\vec{m}$.  As we must have $\card{\cF \cap \cM'_i} > r \max \left( \hmsize, \twomax \right)$, it follows that $\card{ \cM'_i } > \hmsize$, and thus each of the maximal families is in fact extremal.  Moreover, since $\vec{m}$ must be an optimal solution to $\MAX(r)$, we have $s = \ceil{r/3}$, unless $r \equiv 1 \pmod 3$, in which case we could instead have $s = \floor{r/3}$.  This shows that $\cF$ is contained in the right number of extremal $t$-intersecting families.

To finish, we need to show that $\cF$ is in fact equal to the union of those extremal $t$-intersecting families.  We first show that every typical colouring corresponds to the same set of extremal $t$-intersecting families, possibly with different optimal vectors $\vec{m}$.  Suppose for contradiction we had a typical colouring with families $\cM'_1, \hdots, \cM'_{s'}$, and another with families $\cM''_1, \hdots, \cM''_{s''}$, where $\cM''_1 \neq \cM'_i$ for any $1 \le i \le s'$.  Since $\cF$ is contained in the union of the $\cM'_i$, and $s' \le \ceil{r/3}$, we have 
\[ \card{\cF \cap \cM''_1} \le \sum_{i=1}^{s'} \card{\cM'_i \cap \cM''_1} \le s' \twomax < r \twomax, \]
contradicting the definition of a typical colouring.

Hence every typical colouring uses the same set of extremal families.  Now suppose there was some missing member $G \in \cM'_i \setminus \cF$, and let $\tilde{\cF} = \cF \cup \{G\}$.  In every typical colouring of $\cF$ with multiplicity vector $\vec{m}$, there are $m_i \ge 2$ colourings available to members of $\cM'_i$, and hence adding $G$ to $\cF$ at least doubles the number of typical colourings.  While it may be that the atypical colourings of $\cF$ cannot be extended to colour $G$, more than two-thirds of the $(r,t)$-colourings of $\cF$ are typical, and hence $\tilde{\cF}$ has more than four-thirds as many $(r,t)$-colourings as $\cF$, contradicting $\cF$ being close to optimal.

Thus every family $\cF$ close to maximising the number of $(r,t)$-colourings must be the union of $\ceil{r/3}$, (or, when $r \equiv 1 \pmod{3}$, possibly $\floor{r/3}$ instead) extremal $t$-intersecting families.
\end{proof}

It remains to prove Claim~\ref{clm:multiplicities}, a task we now complete.

\begin{proof}[Proof of Claim~\ref{clm:multiplicities}]
For each $F \in \cF$ let  
$a(F) = \card{ \{ j : F\in\cM_j \} } = \sum_{i: F\in\cM'_i} m_i$
denote the number of maximal families containing $F$ in the sequence $(\cM_1, \hdots, \cM_r)$, and thus the number of colours available to $F$.  We then have the bound
\[ c(\cM_1, \hdots, \cM_r) \le \prod_{F \in \cF} a(F) = \prod_{F \in \cF} \left( \sum_{\substack{i \le s:\\ F\in \cM'_i}} m_i \right). \]

We first remove the families with multiplicity $1$ from consideration.  Without loss of generality, suppose there is some $0 \le s' \le s$ such that $m_i \ge 2$ for all $1 \le i \le s'$, and $m_i = 1$ for all $s' < i \le s$.  Suppose $s' < s$, and consider the uniquely-appearing maximal family $\cM'_s$.  Note that any member $F \in \cM'_s$ that is not in $\cM'_i$ for any $i < s$ must have $a(F) = 1$, thus contributing nothing to the upper bound above.  Hence in $\cM'_s$, we need only consider the members belonging to $\cup_{i < s} \left( \cM'_i \cap \cM'_s \right)$.  For every such $F$, we have
\[ a(F) = \sum_{\substack{i \le s :\\ F\in \cM'_i}} m_i = 1 + \sum_{\substack{i < s: \\ F\in \cM'_i}} m_i \le 2 \sum_{\substack{i < s :\\ F\in \cM'_i}} m_i. \]

Moreover, if both $\cM'_i$ and $\cM'_s$ are extremal $t$-intersecting families, then $\card{\cM'_i \cap \cM'_s} \le \twomax$.  Otherwise, at least one of them must be non-extremal, and so $\card{ \cM'_i \cap \cM'_s} \le \hmsize$.  Summing up the $s-1$ intersections, and observing that $s-1 < r$, we collect at most $r \max \left( \hmsize, \twomax \right)$ factors of $2$ in this upper bound, giving
\[ c(\cM_1, \hdots, \cM_r) \le 2^{r \max \left( \hmsize, \twomax \right)} \prod_{\substack{F \in \cF: \\ F \in \cup_{i < s} \cM'_i}} \left( \sum_{\substack{i < s :\\ F\in\cM'_i}} m_i \right). \]

We can then repeat this for each uniquely appearing maximal family, thus eliminating the families $\cM'_{s' + 1}, \hdots, \cM'_s$ from our upper bound.  All the remaining summands then satisfy $m_i \ge 2$, and hence in~\eqref{ineq:sumtoprod} below we can upper bound the inner sums by the corresponding products.
\begin{align}
	c(\cM_1, \hdots, \cM_r) &\le 2^{(s - s') r \max \left( \hmsize, \twomax \right)} \prod_{\substack{F \in \cF: \\ F \in \cup_{i \le s'} \cM'_i} } \left( \sum_{\substack{i \le s' :\\ F\in\cM'_i}} m_i \right) \notag \\
	&\le 2^{(s - s') r \max \left( \hmsize, \twomax \right)} \prod_{\substack{F \in \cF: \\ F \in \cup_{i \le s'} \cM'_i}} \left( \prod_{\substack{i \le s' :\\ F\in\cM'_i}} m_i \right) \label{ineq:sumtoprod} \\
	&= 2^{(s-s') r \max \left( \hmsize, \twomax \right)}\prod_{i=1}^{s'} m_i^{\card{\cF \cap \cM'_i}} \notag \\
	&\le 2^{(s-s') r \max \left( \hmsize, \twomax \right)} \left( \prod_{i=1}^{s'} m_i \right)^{\maxfam}. \label{ineq:multiupbound1}
\end{align}

Now, since we have removed the $s - s'$ uniquely appearing families from consideration, we must have $\sum_{i=1}^{s'} m_i = r - (s - s')$.  The product in the final line is thus bounded by $\OPT(r - (s - s'))$.  Furthermore, the solutions to $\MAX(r)$ given by Lemma~\ref{lem:opt} clearly imply $\OPT(r-1) \le \frac34 \OPT(r)$ for all $r \ge 3$.  As $\OPT(0) = \OPT(1) = 1$, we have $\OPT(r - (s-s')) \le \left( \frac34 \right)^{s - s'} \OPT(r)$, giving
\begin{align}
	c(\cM_1, \hdots, \cM_r) &\le 2^{(s-s') r \max \left( \hmsize, \twomax \right)} \OPT(r - (s-s'))^{\maxfam} \label{ineq:multiupbound2}\\
	&\le \left( \left(\frac34\right)^{\maxfam} 2^{ r \max \left( \hmsize, \twomax \right)} \right)^{s - s'} \OPT(r)^{\maxfam}. \label{ineq:multiupbound3}
\end{align}

With these bounds in place, we can complete the proof of Claim~\ref{clm:multiplicities}.  Observe that, by~\eqref{defn:multicol},
\begin{align*} \lg \left( \left( \frac34 \right)^{\maxfam} 2^{r \max \left( \hmsize, \twomax \right) } \right) &= \left( \lg 3 - 2 \right) \maxfam + r \max \left( \hmsize, \twomax \right) \\
	&< \left( 3 - \lg 9 \right) \maxfam + r \max \left( \hmsize, \twomax \right) \\
	&= - \frac{r \twomax} {3} \lg \OPT(r) - r \lg M - \Delta.
\end{align*}
Hence if $s - s' \ge 1$,~\eqref{ineq:multiupbound3} implies $c(\cM_1, \hdots, \cM_r) < 2^{-\Delta} M^{-r} \OPT(r)^{\maxfam - r\twomax / 3} $.  We may therefore assume $s - s' = 0$.

Now suppose $\card{\cF \cap \cM'_i} \le r \max \left( \hmsize, \twomax \right)$ for some $i$.  The bound in~\eqref{ineq:multiupbound1} is then an overestimate by a factor of at least $m_i^{\maxfam - r \max \left(\hmsize, \twomax \right)}$.  Since $s - s' = 0$, there are no uniquely-appearing families, and so $m_i \ge 2$.  Hence, using~\eqref{defn:multicol},
\begin{align*}
c(\cM_1, \hdots, \cM_r) &\le 2^{r \max \left( \hmsize , \twomax \right) - \maxfam} \OPT(r)^{\maxfam} \\
&< 2^{r \max \left( \hmsize, \twomax \right) - \left( \lg 9 - 3 \right) \maxfam} \OPT(r)^{\maxfam} \\
&= 2^{-\Delta} M^{-r} \OPT(r)^{\maxfam - \frac{r \twomax}{3}}.
\end{align*}

Finally, suppose $\vec{m}$ is not an optimal solution to $\MAX(r)$.  By Lemma~\ref{lem:opt}, we must have $\prod_{i=1}^s m_i = \OBJ(\vec{m}) \le \frac89 \OPT(r)$ in~\eqref{ineq:multiupbound2}.  Hence, using~\eqref{defn:multicol} again, we find
\[ c(\cM_1, \hdots, \cM_r) \le \left( \frac89 \OPT(r) \right)^{\maxfam} = 2^{\left(3 - \lg 9 \right) \maxfam} \OPT(r)^{\maxfam} < 2^{-\Delta} M^{-r} \OPT(r)^{\maxfam - \frac{r \twomax}{3}},\]
as required.  This completes the proof of Claim~\ref{clm:multiplicities}.
\end{proof} 

Theorem~\ref{thm:multicol} provides the desired rough structural characterisation, showing that families close to maximising the number of $(r,t)$-colourings must be unions of extremal $t$-intersecting families.  In fact, if $\Delta$ is large, one obtains a strong stability result: if $\cF$ is not contained in the union of the right number of extremal $t$-intersecting families, then it can only have atypical colourings, whose total number will be insignificant compared to the lower bound $\OPT(r)^{\maxfam - r \twomax / 3}$.

However, unlike Theorem~\ref{thm:3col}, Theorem~\ref{thm:multicol} falls short of determining the optimal families, as the choice of the $\ceil{r/3}$ extremal $t$-intersecting families can affect the number of $(r,t)$-colourings.  In the subsequent subsections we will pursue more precise results for set families and vector spaces, and it will help to have more explicit quantitative bounds on the number of $(r,t)$-colourings of potentially optimal families.

Let $\cF = \cI_1 \cup \hdots \cup \cI_s$, where $s = \ceil{r/3}$, or possibly $\floor{r/3}$ if $r \equiv 1 \pmod 3$, and each $\cI_i$ is an extremal $t$-intersecting family.  The typical $(r,t)$-colourings can be classified by how the colours are partitioned between the families $\cI_i$; let $\vec{C} = (C_1, \hdots, C_s)$ be a partition of $[r]$, where $C_i$ is the set of colours assigned to the family $\cI_i$.  Let $\Phi(\vec{C})$ denote all $(r,t)$-colourings that can arise from this partition; that is, all colourings where each set $F \in \cF$ receives a colour from $\cup_{i : F\in\cI_i} C_i$.  In order for these colourings to be typical, $\vec{m}$, where $m_i = \card{C_i}$, must be an optimal solution to $\MAX(r)$.  Let $\fC$ denote the set of all such partitions $\vec{C}$, and note that $\card{\fC} \le \binom{r}{2} r! / (\prod_i m_i!)$, since we have to choose at most two indices for which $m_i \neq 3$, and then distribute the colours accordingly.

\begin{cor} \label{cor:asymcount}
Under the assumptions of Theorem~\ref{thm:multicol}, if $\cF = \cI_1 \cup \hdots \cup \cI_s$ is a union of extremal $t$-intersecting families, where $s = \ceil{r/3}$, or perhaps $\floor{r/3}$ if $r \equiv 1 \pmod 3$, then the number of $(r,t)$-colourings of $\cF$ is
\[ \left( 1 + O ( 2^{-\Delta} ) \right) \sum_{ \vec{C} \in \fC } \card{\Phi(\vec{C}) }. \]
\end{cor}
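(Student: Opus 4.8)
The plan is to count the $(r,t)$-colourings of $\cF$ by separating them into typical and atypical colourings, show that the typical colourings are enumerated (with small overcounting) by $\sum_{\vec C \in \fC} \card{\Phi(\vec C)}$, and then use Theorem~\ref{thm:multicol} to bound the error contributed by atypical colourings and by overcounting.

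First I would establish a lower bound on the number of typical colourings. Each partition $\vec C \in \fC$ yields a family $\Phi(\vec C)$ of $(r,t)$-colourings, all of which are typical: the associated multiplicity vector $\vec m$ with $m_i = \card{C_i}$ is by definition an optimal solution to $\MAX(r)$, and since each $\cI_i$ is extremal with $\card{\cF \cap \cI_i} \geq \maxfam - (s-1)\twomax > r\max(\hmsize,\twomax)$ (using $\Delta \geq 2$ to guarantee the gap), the colourings in $\Phi(\vec C)$ map to extremal families satisfying the typicality condition. Summing $\card{\Phi(\vec C)}$ over all $\vec C \in \fC$ therefore counts typical colourings, but with multiplicity: a single colouring may lie in $\Phi(\vec C)$ for several partitions $\vec C$ (for instance, a set $F \in \cI_i \cap \cI_j$ coloured with a shared colour does not determine to which $C_i$ that colour ``belongs''). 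I would bound this overcounting by observing that two partitions $\vec C, \vec C'$ can produce the same colouring only if they differ in the assignment of colours that appear on the overlaps $\cI_i \cap \cI_j$; since each such overlap has at most $\twomax$ members and there are fewer than $r^2$ pairs, the fraction of colourings affected is controlled by $\OPT(r)^{-\Omega(\maxfam - r\twomax/3)}$ relative to the main term, which is $O(2^{-\Delta})$ by~\eqref{defn:multicol}.

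Next I would invoke Theorem~\ref{thm:multicol} directly to handle the atypical colourings. The union bound argument in the proof of that theorem, combined with Claim~\ref{clm:multiplicities}, shows the total number of atypical colourings of any family is smaller than $2^{-\Delta}\OPT(r)^{\maxfam - r\twomax/3}$. Since $\sum_{\vec C \in \fC} \card{\Phi(\vec C)}$ is at least the lower bound~\eqref{ineq:rcollowbound} of $\OPT(r)^{\maxfam - r\twomax/3}$ (it contains all colourings of the form described there), the atypical count is an $O(2^{-\Delta})$ fraction of the main term. Combining this with the overcounting estimate, the total number of $(r,t)$-colourings of $\cF$ equals $\sum_{\vec C \in \fC}\card{\Phi(\vec C)}$ up to a multiplicative factor of $1 + O(2^{-\Delta})$.

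The main obstacle will be controlling the overcounting in $\sum_{\vec C \in \fC}\card{\Phi(\vec C)}$ cleanly. The subtlety is that a typical colouring does not canonically determine its partition $\vec C$: the ambiguity arises precisely on the intersections of the extremal families, where a member could be attributed to either family's colour set. I expect to resolve this by fixing, for each typical colouring, a \emph{canonical} partition (say the lexicographically first $\vec C$ with the colouring in $\Phi(\vec C)$) and then arguing that any non-canonical attribution forces a colour to be used on a bounded overlap, so that the number of colourings with genuine ambiguity is exponentially smaller in the number of ``free'' members $\maxfam - r\twomax/3$, hence again an $O(2^{-\Delta})$ correction. With both the atypical contribution and the overcounting correction absorbed into the $\left(1 + O(2^{-\Delta})\right)$ factor, the corollary follows.
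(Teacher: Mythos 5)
Your overall architecture is the same as the paper's: upper-bound the count by the union bound $\sum_{\vec{C}\in\fC}\card{\Phi(\vec{C})}$ over typical partitions plus the atypical colourings, which Theorem~\ref{thm:multicol} bounds by $2^{-\Delta}\OPT(r)^{\maxfam - r\twomax/3}$ and hence by an $O(2^{-\Delta})$ fraction of any single $\card{\Phi(\vec{C})}$ via~\eqref{ineq:rcollowbound}; then lower-bound it by $\sum_{\vec{C}}\bigl(\card{\Phi(\vec{C})} - \sum_{\vec{C}'\neq\vec{C}}\card{\Phi(\vec{C})\cap\Phi(\vec{C}')}\bigr)$. The upper-bound half is fine as you have written it.

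The step that is not sound as written is your bound on the overcounting. The inner sum in the lower bound runs over all $\card{\fC}-1$ other partitions $\vec{C}'$, and $\card{\fC}$ is of order $\binom{r}{2}r!/\prod_i m_i!$ --- super-exponential in $r$ --- not the ``fewer than $r^2$ pairs'' of stars you invoke; this factor has to be beaten explicitly. Moreover, the rate you claim, $\OPT(r)^{-\Omega(\maxfam - r\twomax/3)}$, is not what the natural estimate gives: forbidding one colour of a class of size $m_i\ge 2$ on the members private to $\cI_i$ loses only a factor like $\bigl((m_i-1)/m_i\bigr)^{\maxfam - r\twomax/3}$, and turning that into $O(2^{-\Delta})$ \emph{after} multiplying by $\card{\fC}$ requires a computation you have not supplied. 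The paper's resolution is cleaner and uses a tool you already have: if a colouring lies in $\Phi(\vec{C})\cap\Phi(\vec{C}')$ with some colour $c\in C_1\cap C'_2$, then $c$ is usable only on $\cI_1\cap\cI_2$, so the colouring also lies in $\Phi(\vec{C}'')$ where $\vec{C}''$ moves $c$ from $C_1$ to $C_2$; the resulting multiplicity vector is no longer optimal for $\MAX(r)$, so Claim~\ref{clm:multiplicities} gives $\card{\Phi(\vec{C})\cap\Phi(\vec{C}')}\le 2^{-\Delta}\nummax^{-r}\OPT(r)^{\maxfam-r\twomax/3}$. The extra factor $\nummax^{-r}$, combined with $\nummax\ge r/3$ and a Stirling estimate showing $\card{\fC}\nummax^{-r}=O(1)$, is precisely what absorbs the sum over all $\vec{C}'$. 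With that substitution your ``canonical partition'' device becomes unnecessary and the proof closes.
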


\begin{proof}
We begin with some estimates on the quantity $\card{\Phi(\vec{C})}$.  For every choice of $\vec{C}$, the lower bound of~\eqref{ineq:rcollowbound} holds.  On the other hand,~\eqref{ineq:multiupbound3} provides an upper bound.  Hence we have
\[ \OPT(r)^{\maxfam - r \twomax / 3} \le \card{ \Phi( \vec{C} ) } \le \OPT(r)^{\maxfam}. \]

By taking a union bound over all the partitions $\vec{C} \in \fC$, we know that the number of typical colourings of $\cF$ is at most $\sum_{\vec{C} \in \fC} \card{ \Phi( \vec{C} ) }$.  By Theorem~\ref{thm:multicol}, the number of atypical colourings is at most $2^{-\Delta} \OPT(r)^{\maxfam - r \twomax / 3}$.  In light of our lower bound on $\card{\Phi(\vec{C})}$, this shows that the total number of $(r,t)$-colourings of $\cF$ is at most $(1 + 2^{-\Delta}) \sum_{\vec{C} \in \fC} \card{\Phi(\vec{C})}$, giving the required upper bound.

For the lower bound, we only consider the typical colourings arising from a unique partition in $\fC$; that is, the number of $(r,t)$-colourings is at least
\begin{equation} \label{ineq:asymlowbound}
\sum_{\vec{C} \in \fC} \left( \card{\Phi(\vec{C})} - \sum_{\vec{C}' \neq \vec{C}} \card{\Phi(\vec{C}) \cap \Phi(\vec{C}')} \right).
\end{equation}

Given two distinct partitions $\vec{C}$ and $\vec{C}'$, there must be some colour $c$ that is in different parts in the two partitions.  Without loss of generality, $c \in C_1$ and $c \in C'_2$.  Suppose $\card{C_1} \le \card{C_2}$ (the case $\card{C_2} < \card{C_1}$ can be handled similarly).  For a colouring to belong to $\Phi(\vec{C}) \cap \Phi(\vec{C}')$, the colour $c$ can only be used for members of $\cI_1 \cap \cI_2$.  In particular, such a colouring must also belong to $\Phi(\vec{C}'')$, where
\[ C''_i = \begin{cases}
	C_1 \setminus \{c \} &\mbox{if } i = 1 \\
	C_2 \cup \{ c \} &\mbox{if } i = 2\\
	C_i &\mbox{otherwise}
\end{cases}. \]

However, if $m''_i = \card{C''_i}$, then $\vec{m}''$ cannot be an optimal solution to $\MAX(r)$.  Hence, by Claim~\ref{clm:multiplicities}, $\card{ \Phi(\vec{C}) \cap \Phi(\vec{C}') } \le \card{\Phi(\vec{C}'')} \le 2^{-\Delta} \nummax^{-r} \OPT(r)^{\maxfam - r \twomax / 3} \le 2^{-\Delta} \nummax^{-r} \card{\Phi(\vec{C})}$.  This gives, for fixed $\vec{C}$,
\[ \sum_{\vec{C}' \neq \vec{C}} \card{ \Phi(\vec{C}) \cap \Phi(\vec{C}')} < \card{\fC} 2^{-\Delta} \nummax^{-r} \card{\Phi(\vec{C})}. \]

Since we have at least $\ceil{r/3}$ distinct extremal $t$-intersecting families, we must have $\nummax \ge r/3$.  On the other hand, recall that $\card{\fC} \le \binom{r}{2} r! / (\prod_i m_i!)$.  Coupling the upper bound from Stirling's Approximation with the observation that $m_i = 3$ for at least $r/3 - 2$ indices $i$, we have ${\card{\fC} \le 18 r^{5/2}e \cdot (r / (6^{1/3} e) )^r}$.  Hence $\card{\fC} \nummax^{-r} \le 18 r^{5/2} e \cdot ( 3 / (6^{1/3} e) )^r$.  As this expression tends to $0$ as $r \rightarrow \infty$, it can be bounded above by some absolute constant.

Using this upper bound in~\eqref{ineq:asymlowbound}, we find the number of $(r,t)$-colourings of $\cF$ to be at least $(1 - O(2^{-\Delta})) \sum_{\vec{C} \in \fC} \card{\Phi(\vec{C})}$, completing the proof.
\end{proof}

\subsection{Set families} \label{subsec:multicolsets}

We now seek a more precise characterisation of optimal families, and begin in the context of set families.  Using Theorem~\ref{thm:multicol} and Corollary~\ref{cor:asymcount}, we extend the previous results of Hoppen, Kohayakawa and Lefmann~\cite{HKL12} for five or more colours,\footnote{For four colours, our method allows us to determine the optimal families precisely when $t = 1$.  However, for $t \ge 2$, we only get the asymptotics of the maximum number of $(4,t)$-colourings, while the precise characterisation obtained in~\cite{HKL12} (see \cite{HKL13} for the full proof) requires careful study of the atypical colourings as well.} showing they hold even when $n$ is only moderately large in terms of $k$, $t$ and $r$.  We recall our main result from this setting.

\rcolsets*

In order to prove Proposition~\ref{prop:rcolsets}, we require a few preliminary lemmas.  The first of these shows that the quantity $\Delta$ from the previous section can be made arbitrarily large, thus enabling the use of Theorem~\ref{thm:multicol} and Corollary~\ref{cor:asymcount}.

\begin{lemma} \label{lem:setsdelta}
For every $K > 0$ there is some $C = C(K)$ such that if $r \ge 4$, $k > t \ge 1$ and $n \ge C r^2 k (k-t)$, then $\Delta \ge K$, where $\Delta$ is as defined in~\eqref{defn:multicol}.
\end{lemma}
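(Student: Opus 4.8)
The plan is to substitute the known quantitative bounds for the four parameters appearing in the definition~\eqref{defn:multicol} of $\Delta$, and verify that the dominant positive term $(\lg 9 - 3)\maxfam$ overwhelms everything else once $n \ge C r^2 k(k-t)$ for a suitably large constant $C$. Recall from Corollary~\ref{cor:3sets} and the surrounding discussion that in the set family setting we have $\maxfam = \binom{n-t}{k-t}$, that $\hmsize$ and $\twomax$ are both $O\!\left(\binom{n-t-1}{k-t-1}\right)$ and hence a factor of roughly $(k-t)/(n-k)$ smaller than $\maxfam$, and that $\lg\nummax \le \binom{2(k-t)+1}{k-t}\lg\binom{n}{k}$. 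The remaining ingredient, $\lg\OPT(r)$, is at most $r$ (since $\OPT(r) \le 3^{r/3} \le 2^r$), so the third term $\frac{r\twomax}{3}\lg\OPT(r)$ is at most $\frac{r^2}{3}\twomax$.

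First I would record the order of magnitude of each term as a function of $n$, treating $k$, $t$ and $r$ as fixed for the moment. Using the elementary bound $\binom{n-t}{k-t} \ge \left(\frac{n-k}{k-t}\right)^{k-t}$ and the fact that $\binom{n-t-1}{k-t-1} \le \frac{k-t}{n-k}\binom{n-t}{k-t}$, the positive term is $\Omega\!\left(\binom{n-t}{k-t}\right)$ while $r\max(\hmsize,\twomax)$ is bounded by a constant times $r(k-t)/(n-k)$ fraction of $\maxfam$, and $\frac{r\twomax}{3}\lg\OPT(r)$ is at most an $O\!\left(r^2(k-t)/(n-k)\right)$ fraction of $\maxfam$. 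Thus both of these terms are dominated by $(\lg 9 - 3)\maxfam$ as soon as $n - k$ exceeds a constant multiple of $r^2(k-t)$; this is exactly where the $r^2 k(k-t)$ scaling in the hypothesis enters, with the extra factor of $k$ giving room to spare. The final term $r\lg\nummax$ grows only like $r\binom{2(k-t)+1}{k-t}\lg\binom{n}{k} = O(\cdot\,\lg n)$, which is negligible against the polynomial-in-$n$ growth of $\maxfam$.

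The cleanest way to organise this is to show $\Delta \ge (\lg 9 - 3)\maxfam - o(\maxfam) - O(\lg n)$ and then argue that $(\lg 9 - 3)\maxfam$ alone exceeds any fixed target $K$ plus the error terms once $n \ge C r^2 k(k-t)$. A subtle point is that I must track the dependence on $k$ and $t$ carefully, not just on $n$: the binomial $\binom{2(k-t)+1}{k-t} \le 4^{k-t}$ could in principle be large, but it multiplies only $\lg\binom{n}{k} = O(k\lg n)$, and this is still exponentially smaller than $\maxfam = \binom{n-t}{k-t} \ge \left(\frac{n-k}{k-t}\right)^{k-t}$ because the choice $n \ge C r^2 k(k-t)$ forces the base $\frac{n-k}{k-t}$ to be large (at least a large constant times $r^2 k$), so that $\maxfam$ beats $4^{k-t}$ comfortably.

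The main obstacle I anticipate is making the comparison between $\maxfam$ and the correction terms fully uniform in the three parameters $r$, $k$ and $t$ simultaneously, rather than fixing two and letting the third grow. In particular I must ensure that the factor $(k-t)/(n-k)$ controlling the relative size of $\twomax$ to $\maxfam$, once multiplied by $r^2$ from the $\lg\OPT(r)$ term, is still bounded by a small absolute constant; this is precisely guaranteed by $n \ge C r^2 k(k-t)$, which gives $\frac{r^2(k-t)}{n-k} \le \frac{r^2(k-t)}{Cr^2k(k-t) - k} \le \frac{2}{Ck}$ for $C$ large. Choosing $C = C(K)$ large enough to absorb the constant $\lg 9 - 3 \approx 0.17$ and the target $K$ then closes the argument. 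I would present this as a short sequence of inequalities substituting each bound into~\eqref{defn:multicol}, rather than optimising constants, since only the existence of some $C(K)$ is claimed.
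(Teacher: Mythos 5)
Your proposal is correct and follows essentially the same route as the paper: substitute $\maxfam = \binom{n-t}{k-t}$, $\twomax = \binom{n-t-1}{k-t-1}$, $\hmsize \le (k+1)\binom{n-t-1}{k-t-1}$, $\lg \nummax \le \binom{2(k-t)+1}{k-t}\lg\binom{n}{k}$ and $\OPT(r) \le 3^{r/3}$ into~\eqref{defn:multicol}, then show each negative term is a small fraction of $(\lg 9 - 3)\maxfam$ under $n \ge Cr^2k(k-t)$. One small caution: your claim that $\hmsize = O\bigl(\binom{n-t-1}{k-t-1}\bigr)$ hides a factor of $k+1$ coming from the Ahlswede--Khachatrian bound, so the relevant ratio is $rk(k-t)/(n-t)$ rather than $r(k-t)/(n-k)$ --- this is precisely what the extra factor of $k$ in the hypothesis is needed for (it is not merely ``room to spare''), but your argument still closes once this is tracked.
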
 

\begin{proof}
We begin by recalling the values of the key parameters from Section~\ref{subsec:3colsets}.  Provided $C \ge 1$, we certainly have $n > (t+1)(k-t+1)$.  The Erd\H{o}s--Ko--Rado Theorem~\cite{EKR61}, together with the work of Frankl~\cite{F78} and Wilson~\cite{W84}, then establishes that the largest $t$-intersecting families are the $t$-stars, and hence $\maxfam = \binom{n-t}{k-t}$.  As any set in the intersection of two $t$-stars must contain the union of the two centres, which consists of at least $t+1$ elements, we further have $\twomax = \binom{n-t-1}{k-t-1}$.  The number of maximal $t$-intersecting families was bounded by Balogh et al.~\cite{BDDLS15}, who showed that we may take $\nummax = \binom{n}{k}^{\binom{2(k-t)+1}{k-t}}$.

Finally, we are left with the size of the largest $t$-intersecting family not contained in a $t$-star.  Sharpening the bounds of Frankl~\cite{F78b}, Ahlswede and Khachatrian~\cite{AK96} proved $\hmsize = \max( \card{\cH_1}, \card{\cH_2} )$, where $\card{\cH_1} = (t+2)\binom{n-t-2}{k-t-1} + \binom{n-t-2}{k-t-2}$ and $\card{\cH_2} = \binom{n-t}{k-t} - \binom{n-k-1}{k-t} + t$.  Since our $n$ here is quite a bit larger than before, we shall estimate these quantities somewhat differently.  We have $\card{\cH_1} \le (t+2) \binom{n-t-1}{k-t-1}$ and $\card{\cH_2}=\binom{n-t-1}{k-t-1}+\ldots+\binom{n-k-1}{k-t-1}+t \le (k-t+1) \binom{n-t-1}{k-t-1} + t$, and so we will use the bound $\hmsize \le (k+1) \binom{n-t-1}{k-t-1}$.

Substituting these values into~\eqref{defn:multicol} and using the bound $\OPT(r) \le 3^{r/3}$, we find
\begin{align}
&\Delta = (\lg 9 - 3) \maxfam - r \max ( \hmsize, \twomax ) - \frac{r \twomax}{3} \lg \OPT(r) - r \lg \nummax \notag \\
&\ge (\lg 9 - 3 ) \binom{n-t}{k-t} - r (k+1)\binom{n-t-1}{k-t-1} - \frac{r^2 \lg 3}{9} \binom{n-t-1}{k-t-1} - r \binom{2 (k-t) + 1}{k-t} \lg \binom{n}{k} \notag \\
&\ge \left( \lg 9 - 3 - \frac{r(k-t) \left( 9 (k+1) + r \lg 3\right)}{9 (n-t)} \right) \binom{n-t}{k-t} - rk \binom{2(k-t)+1}{k-t} \lg \left( \frac{ne}{k} \right) \notag \\
&\ge \left( \lg 9 - 3 - \frac{k r^2 (k-t) }{n-t} - rk \lg \left( \frac{ne}{k} \right) \left( \frac{2(k-t)+ 1}{n-t} \right)^{k-t} \right) \binom{n-t}{k-t}. \label{ineq:setdelta}
\end{align}

Define $\omega$ so that $n = 2 \omega r^2 k (k-t)$.  Since $t < k$, we have $n - t > \omega r^2 k (k-t)$.  The third term in the parentheses is thus at most $\omega^{-1}$, which will be smaller than $\frac{1}{40}$ provided $\omega \ge 40$.

To bound the last term in the parentheses, first suppose $k - t = 1$.  In this case, we have
\[ r k \lg \left( \frac{ne}{k} \right) \left( \frac{2(k-t) + 1}{n-t} \right)^{k-t} < \frac{3 rk \lg ( 2 e \omega r^2 ) }{\omega r^2 k} = \frac{ 3 \lg \omega + 6 \lg r + 3 \lg (2e ) }{\omega r}, \]
which will be at most $\frac{1}{40}$ if $\omega$ is large enough (independent of $r$ and $k$).

On the other hand, when $k-t \ge 2$,
\[ rk \lg \left( \frac{ne}{k} \right) \left( \frac{2(k-t) + 1}{n-t} \right)^{k-t} \le rk \lg (2 e \omega r^2 k ) \left( \frac{3}{\omega r^2 k} \right)^{k-t} \le \frac{9 \lg (2 e \omega r^2 k )}{\omega^2 r^3 k}, \]
which again can be bounded by $\frac{1}{40}$ when $\omega$ is large (independent of the other parameters).

Hence, provided $\omega$ is large enough, the parenthetical term in~\eqref{ineq:setdelta} is at least $\lg 9 - 3 - \frac{1}{20} \ge \frac{1}{10}$, which implies
\[ \Delta \ge \frac{1}{10} \binom{n-t}{k-t} \ge \frac{1}{10} \omega r^2 k (k-t), \]
which be larger than $K$ provided $\omega > 10 K$.  We may then take $C$ to be the smallest value of $\omega$ for which the claimed bounds all hold.
\end{proof}

The next lemma allows us to compare the number of $(r,t)$-colourings of different unions of $t$-stars.  Recall that $\fC$ denotes the set of all optimal partitions of $r$ colours that give rise to typical $(r,t)$-colourings.

\begin{lemma} \label{lem:setsstarswap}
Suppose $k > t \ge 1$, $r \ge 5$, and $n \ge k + rt$.  Set $s = \ceil{r/3}$ or, if $r \equiv 1 \pmod 3$, possibly $\floor{r/3}$.  Let $\cF = \cI_1 \cup \hdots \cup \cI_s$, where the $\cI_i$ are distinct $t$-stars with centres $T_i$ respectively, and suppose $\card{T_1 \cap T_s} \ge \max (1, 2t - k)$.  Let $\t\cF = \cI_1 \cup \hdots \cup \cI_{s-1} \cup \t\cI_s$, where $\t\cI_s$ is a $t$-star whose centre $\t{T}_s$ is disjoint from $\cup_{i \le s} T_i$.  For $\vec{C} \in \fC$, let $\Phi(\vec{C})$ and $\t\Phi(\vec{C})$ be the corresponding $(r,t)$-colourings of $\cF$ and $\t\cF$ respectively.

We then have $\card{\t\Phi(\vec{C})} \ge \frac65 \card{\Phi(\vec{C})}$, unless $\card{C_1} = \card{C_s} = 2$, in which case $\card{\t\Phi(\vec{C})} \ge \card{\Phi(\vec{C})}$.
\end{lemma}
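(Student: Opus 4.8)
The plan is to evaluate both $\card{\Phi(\vec C)}$ and $\card{\t\Phi(\vec C)}$ as products of available‑colour counts and to reduce the comparison to the two stars that actually differ. Since the colour sets $C_i$ are disjoint, a member $F$ of $\cF$ has exactly $a(F) = \beta(F) + m_s[T_s \subseteq F]$ available colours, where $\beta(F) = \sum_{i < s\,:\, T_i \subseteq F} m_i$ records the colours coming from the common part $\cA := \cI_1 \cup \cdots \cup \cI_{s-1}$, and $\card{\Phi(\vec C)} = \prod_{F \in \cF} a(F)$. First I would factor out the quantity $P_0 = \prod_{F \in \cA} \beta(F)$, common to both families, together with $m_s^{\card{\cI_s}} = m_s^{\card{\t\cI_s}} = m_s^{\binom{n-t}{k-t}}$, to obtain
\[
\card{\Phi(\vec C)} = P_0\, m_s^{\binom{n-t}{k-t}} \prod_{F \in \cA \cap \cI_s} \psi(\beta(F))^{-1}, \qquad \psi(\beta) = \frac{m_s \beta}{\beta + m_s},
\]
and the analogous identity for $\t\cF$. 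The lemma then reduces to a single ratio,
\[
\frac{\card{\t\Phi(\vec C)}}{\card{\Phi(\vec C)}} = \frac{\prod_{F \in \cA \cap \cI_s} \psi(\beta(F))}{\prod_{F \in \cA \cap \t\cI_s} \psi(\beta(F))},
\]
where I would record that $\psi$ is increasing and that $\psi(\beta) \ge 1$ for every $\beta \ge 2$, with equality only when $\beta = m_s = 2$; note $\beta(F) \ge 2$ on both overlap regions since every $m_i \ge 2$ in an optimal solution to $\MAX(r)$.

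The core of the argument is an injection showing the numerator dominates. Let $\theta$ be the involution of $[n]$ that swaps the two disjoint $t$-sets $T_s$ and $\t{T}_s$ (via a fixed bijection) and fixes all other coordinates; it maps $k$-sets to $k$-sets and carries $\t\cI_s$ bijectively onto $\cI_s$. For $\t F \in \t\cI_s$ I would check that $\theta(\t F) \supseteq \t F \setminus \t{T}_s$, so any centre $T_i$ contained in $\t F$ (equivalently, contained in $\t F \setminus \t{T}_s$, as $\t{T}_s$ meets no centre) is also contained in $\theta(\t F)$; hence $\beta(\theta(\t F)) \ge \beta(\t F)$, and in particular $\theta$ injects $\cA \cap \t\cI_s$ into $\cA \cap \cI_s$. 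Since $\psi$ is increasing,
\[
\prod_{F \in \cA \cap \cI_s} \psi(\beta(F)) \ge \Big( \prod_{F \in \cA \cap \t\cI_s} \psi(\beta(F)) \Big) \prod_{F' \text{ unmatched}} \psi(\beta(F')),
\]
and because every factor $\psi(\beta(F')) \ge 1$, the ratio above is at least $\prod_{\text{unmatched}} \psi(\beta(F')) \ge 1$. This already yields $\card{\t\Phi(\vec C)} \ge \card{\Phi(\vec C)}$ in all cases, covering the exceptional one.

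To improve $1$ to $\tfrac65$ away from the exceptional case, it suffices to exhibit a single unmatched set with large $\beta$. I would take $F^* = T_1 \cup T_s \cup W$, where $W$ consists of $k - \card{T_1 \cup T_s}$ coordinates disjoint from $\t{T}_s$ and from every centre; this is a genuine $k$-set because $\card{T_1 \cap T_s} \ge 2t - k$ forces $\card{T_1 \cup T_s} \le k$, and $W$ exists since $n \ge k + rt$ while $\bigcup_{i \le s} T_i \cup \t{T}_s$ has at most $(s+1)t \le rt$ coordinates. Then $F^* \in \cA \cap \cI_s$ and $\beta(F^*) \ge m_1$, while its partner $\theta(F^*) = \t{T}_s \cup (T_1 \setminus T_s) \cup W$ contains no centre — here $\card{T_1 \cap T_s} \ge 1$ is exactly what breaks $T_1$ under the swap, leaving $\card{T_1 \setminus T_s} \le t - 1 < t$ — so $\beta(\theta(F^*)) = 0$ and $F^*$ is unmatched. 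Therefore $\card{\t\Phi(\vec C)}/\card{\Phi(\vec C)} \ge \psi(\beta(F^*)) \ge \psi(m_1) = \frac{m_1 m_s}{m_1 + m_s}$, and a check of the finitely many values $m_1, m_s \in \{2,3,4\}$ shows this is at least $\tfrac65$ unless $m_1 = m_s = 2$. The main obstacle is pinning down the right involution and verifying that $F^*$ is genuinely unmatched: both hypotheses on $\card{T_1 \cap T_s}$ are used here, the bound $2t - k$ to fit $F^*$ into a $k$-set and the bound $1$ to destroy the centre $T_1$ in the preimage.
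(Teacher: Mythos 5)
Your proof is correct, and it rests on the same combinatorial core as the paper's — the element-swap bijection between $\cI_s$ and $\t\cI_s$ (your $\theta$, the paper's $\tau$), the observation that membership in the other stars can only be gained when passing from $\t\cI_s$ to $\cI_s$ because $\t{T}_s$ meets no centre, and a single witness set containing $T_1 \cup T_s$ whose image under the swap escapes all stars but $\t\cI_s$. Where you genuinely diverge is in the counting. The paper partitions $\cF$ and $\t\cF$ into pieces ($\cA, \cB, \cC, \cD$ and their tilded counterparts) and constructs an explicit injection $\Phi(\vec{C}) \hookrightarrow \t\Phi(\vec{C})$ piece by piece, using auxiliary injections $f_{S,\t{S}}$ and $g_S$ on products of colour sets; the factor $\tfrac65$ comes from $g_S : S \cup C_s \hookrightarrow S \times C_s$ missing a sixth of its codomain. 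You instead exploit that $\card{\Phi(\vec{C})}$ is \emph{exactly} the product $\prod_F a(F)$ of available-colour counts, factor out everything common to $\cF$ and $\t\cF$, and reduce the comparison to a ratio of products of $\psi(\beta) = m_s\beta/(m_s+\beta)$ over the two overlap regions $\cA \cap \cI_s$ and $\cA \cap \t\cI_s$; monotonicity of $\psi$ plus $\psi(\beta) \ge 1$ for $\beta, m_s \ge 2$ then does all the work, and the witness $F^*$ supplies the factor $\psi(m_1) \ge \tfrac65$. Note that your $\psi(\card{S})$ is literally the paper's $\card{S \times C_s}/\card{S \cup C_s}$, so the quantitative gain is identical; what your route buys is that no explicit injection of colourings (and no case analysis over the pieces $\cB, \cC, \cD$) is needed — the exact product formula replaces it. All the supporting details you flag check out: $\theta(\t{F}) \supseteq \t{F} \setminus \t{T}_s$ because $\theta(\t{F}) \supseteq \theta(\t{T}_s) = T_s \supseteq \t{F} \cap T_s$; the hypothesis $\card{T_1 \cap T_s} \ge 2t-k$ makes $F^*$ a $k$-set and $\card{T_1 \cap T_s} \ge 1$ destroys $T_1$ in $\theta(F^*)$; and $n \ge k + rt$ guarantees the padding $W$ exists.
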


\begin{proof}
We begin by considering some particular partitions of $\cF$ and $\t\cF$.  Label the elements in the centres of $\cI_s$ and $\t\cI_s$ by setting $T_s = \{ x_1, x_2, \hdots, x_t \}$ and $\t{T}_s = \{ y_1, y_2, \hdots, y_t \}$.  We can then define the bijection $\tau: \cI_s \setminus \t\cI_s \rightarrow \t\cI_s \setminus \cI_s$ by replacing elements in $T_s$ with the corresponding element in $\t{T}_s$; that is, $\tau(F) = F \cup \{ y_i : y_i \notin F\} \setminus \{ x_i : y_i \notin F \}$.  We claim that the bijection $\tau$ has the following two properties.
\begin{itemize}
	\item[(i)] For every $F \in \cI_s \setminus \t\cI_s$ and $i \in [s-1]$, if $\tau(F) \in \cI_i$, then $F \in \cI_i$.
	\item[(ii)] There is some $F \in \cI_1 \cap \cI_s \setminus \t\cI_s$ such that $\tau(F)$ only belongs to $\t\cI_s$.
\end{itemize}

To see why (i) holds, observe that the only elements in $\tau(F) \setminus F$ come from the set $\t{T}_s$, which is disjoint from $\cup_{j \le s} T_j$.  Hence if $\tau(F) \in \cI_i$ --- that is, if $T_i \subset \tau(F)$ --- we must have $T_i \subset F$ as well.  For (ii), let $F$ be a set containing $T_1 \cup T_s$, with the remaining elements in $F$ not belonging to any other centre.  Our lower bound on $n$ guarantees that such a set exists.  The elements of $T_s$ are removed when forming $\tau(F)$, and hence $\tau(F)$ has fewer than $t$ elements from $\cup_{j \le s} T_j$, and is thus not contained in any of the other $t$-stars.

We now use $\tau$ to partition $\cF$ and $\t\cF$ into some subfamilies.  First set $\cG = \cI_1 \cup \hdots \cup \cI_{s-1}$ and $\cA = (\cI_s \cap \t\cI_s) \cup ( \cG \setminus (\cI_s \cup \t\cI_s) )$.  Next define $\cB = \cF \setminus \t\cF = (\cI_s \setminus \t\cI_s) \setminus \cG$, and let $\t\cB = \tau(\cB)$.  Note that Property (i) implies $\t\cB \subset (\t\cI_s \setminus \cI_s) \setminus \cG$.  Let $\t\cC = (\t\cI_s \setminus \cI_s) \cap \cG$, and set $\cC = \tau^{-1}(\t\cC)$.  Again, Property (i) implies $\cC \subset (\cI_s \setminus \t\cI_s) \cap \cG$.  Finally, we denote the remaining sets by $\cD$ and $\t\cD$; that is, $\cD = \{ F \in (\cI_s \setminus \t\cI_s) \cap \cG : \tau(F) \notin \cG \}$, and $\t\cD = \tau(\cD) = \{ F \in (\t\cI_s \setminus \cI_s) \setminus \cG : \tau^{-1}(F) \in \cG \}$.  Note that Property (ii) implies $\cD$ and $\t\cD$ are non-empty.  Observe also that these subfamilies partition $\cF$ and $\t\cF$ in the following ways: $\cF = \cA \sqcup \cB \sqcup \cC \sqcup \t\cC \sqcup \cD$ and $\t\cF = \cA \sqcup \t\cB \sqcup \cC \sqcup \t\cC \sqcup \cD \sqcup \t\cD$.

With these partitions in mind, we prove the lemma by injectively mapping the colourings in $\Phi(\vec{C})$ into (a fraction of) the colourings in $\tilde{\Phi}(\vec{C})$.  For any set $S \subseteq C_1 \cup \hdots \cup C_{s-1}$ of at least two colours, and any non-empty subset $\t{S} \subseteq S$, fix arbitrary injective maps $f_{S,\t{S}} : (S \cup C_s) \times \t{S} \hookrightarrow S \times (\t{S} \cup C_s)$ and $g_S : S \cup C_s \hookrightarrow S \times C_s$.  Observe that the latter is possible since $\card{S}, \card{C_s} \ge 2$, and so $\card{S} + \card{C_s} \le \card{S} \card{C_s}$.  Moreover, unless $\card{S} = \card{C_s} = 2$, $g_S$ can hit at most five-sixths of $S \times C_s$.  Also, given $F \in \cF$, let $A(F) = \cup_{i : F \in \cI_i } C_i$ denote the colours available to $F$ in $\Phi(\vec{C})$, and let $\t{A}(\t{F})$ be the corresponding set of colours under $\t\Phi(\vec{C})$ (with $\t\cI_s$ replacing $\cI_s)$.

We now build an injection $\Phi(\vec{C}) \hookrightarrow \t\Phi(\vec{C})$.  Given a colouring $\vphi \in \Phi(\vec{C})$, we will find a corresponding colouring $\t\vphi \in \t\Phi(\vec{C})$.  We start with sets $F \in \cA$.  As $A(F) = \t{A}(F)$, we simply take $\t\vphi(F) = \vphi(F)$.

A set $\t{F} \in \t\cB$ is only contained in $\t\cI_s$, and hence $\t{A}(\t{F}) = C_s$.  However, it is paired with a set $F = \tau^{-1}(\t{F}) \in \cB$, which is only in $\cI_s$, so $A(F) = C_s$ as well.  We may therefore set $\t{\vphi}(\t{F}) = \vphi(F)$.

The sets in $\cC \sqcup \t\cC$, common to both $\cF$ and $\t\cF$, naturally come in pairs $\{ F, \t{F} \}$, where $F \in \cC$ and $\t{F} = \tau(F) \in \t\cC$.  Let $I = \{ i \in [s-1] : F \in \cI_i \}$ and $S = \cup_{i \in I} C_i$, and define $\t{I}$ and $\t{S}$ similarly for $\t{F}$.  Property (i) implies $\t{I} \subseteq I$, and hence $\t{S} \subseteq S$.  We have $A(F) = S \cup C_s$ and $A(\t{F}) = \t{S}$, while $\t{A}(F) = S$ and $\t{A}(\t{F}) = \t{S} \cup C_s$.  Hence we assign $(\t\vphi(F), \t\vphi(\t{F})) = f_{S,\t{S}}(\vphi(F), \vphi(\t{F}))$.

This leaves us with the sets in $\cD \sqcup \t\cD$.  These again come in pairs $\{ F, \t{F} \}$, where $F \in \cD \subset \cF \cap \t\cF$, while $\t{F} \in \t\cD$ is only in $\t\cF$.  Let $I = \{ i \in [s-1] : F \in \cI_i \}$, and let $S = \cup_{i \in I} C_i$.  We then have $A(F) = S \cup C_s$, while $\t{A}(F) = S$ and $\t{A}(\t{F}) = C_s$.  We may therefore set $(\t\vphi(F), \t\vphi(\t{F})) = g_S(\vphi(F))$.

This completes the definition of $\t\vphi$.  Since this map is defined injectively on each part, this gives us an injection from $\Phi(\vec{C})$ to $\t\Phi(\vec{C})$, showing $\card{\t\Phi(\vec{C})} \ge \card{\Phi(\vec{C})}$.  Now consider the set $F \in \cD$ given by Property (ii).  For this set $F$, $C_1 \subseteq S$, where $S$ is as defined above.  Hence, unless $\card{C_1} = \card{S} = 2$, our injection uses at most five-sixths of the possible colourings of the pair $\{F, \t{F} \}$ in $\t\Phi(\vec{C})$, which would imply $\card{\t\Phi(\vec{C})} \ge \frac65 \card{\Phi(\vec{C})}$, as claimed.
\end{proof}

Our final lemma lets us compare unions of $\floor{r/3}$ and $\ceil{r/3}$ $t$-stars when $r \equiv 1 \pmod 3$.

\begin{lemma} \label{lem:sets4v22}
Suppose $k > t \ge 1$ and $r \ge 7$ is such that $r \equiv 1 \pmod 3$.  Let $s = \ceil{r/3}$, and let $\cI_1, \cI_2, \hdots, \cI_s$ be $t$-stars with centres $T_1, T_2, \hdots, T_s$ respectively, such that $\card{T_i \cap T_j} \le \max(0, 2t - k - 1)$ for all $1 \le i < j \le s$.  Let $\cF = \cI_1 \cup \hdots \cup \cI_{s-1}$ and $\t\cF = \cI_1 \cup \hdots \cup \cI_{s-1} \cup \cI_s$.  If $\vec{C} = (C_1, \hdots, C_{s-1})$ is an optimal partition of $r$ colours into $s-1$ parts, with $\card{C_1} = 4$, let $\vec{C'} = (C_1', C_2, \hdots, C_{s-1}, C_s')$, where $C_1' \cup C_s'$ is some arbitrary partition of $C_1$ into two sets of size two.

If $\Phi(\vec{C})$ is the set of $(r,t)$-colourings of $\cF$ corresponding to $\vec{C}$, and $\t\Phi(\vec{C'})$ the set of $(r,t)$-colourings of $\t\cF$ corresponding to $\vec{C'}$, then $\card{\t\Phi(\vec{C'})} \ge \card{\Phi(\vec{C})}$.
\end{lemma}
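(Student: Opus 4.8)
The plan is to reduce the statement to a pure product comparison. Since both $\vec{C}$ and $\vec{C'}$ partition $[r]$ into pairwise disjoint colour classes, a colouring in $\Phi(\vec{C})$ is nothing more than an arbitrary choice, for each $F \in \cF$, of a colour from its palette $A(F) = \cup_{i : F \in \cI_i} C_i$, and every such choice yields a valid typical colouring. Hence $\card{\Phi(\vec{C})} = \prod_{F \in \cF} \card{A(F)}$, with $\card{A(F)} = \sum_{i : F \in \cI_i} \card{C_i}$, and likewise $\card{\t\Phi(\vec{C'})} = \prod_{F \in \t\cF} \card{\t{A}(F)}$. The task therefore becomes to show that the ratio $\card{\t\Phi(\vec{C'})} / \card{\Phi(\vec{C})} \ge 1$, which I would do by matching each set of $\cI_1$ (whose palette shrinks by two when $C_1$ is split into $C_1' \cup C_s'$) against a set of the new star $\cI_s$ (whose palette grows by the two colours of $C_s'$).

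The first step is to extract a dichotomy from the centre hypothesis $\card{T_i \cap T_j} \le \max(0, 2t - k - 1)$. When $k \le 2t - 1$, this forces $\card{T_i \cup T_j} \ge k+1$, so the stars are pairwise disjoint as families; when $k \ge 2t$, it forces the centres $T_i$ themselves to be pairwise disjoint. In each regime I would construct a bijection $\sigma : \cI_1 \setminus \cI_s \to \cI_s \setminus \cI_1$ preserving membership in the middle stars, i.e.\ with $\{ i \in [2,s-1] : F \in \cI_i \} = \{ i : \sigma(F) \in \cI_i \}$. When the centres are disjoint, I would take $\sigma$ to be the involution $\hat{\rho}$ of $[n]$ induced by a bijection $T_1 \to T_s$ (swapping the two centres pointwise and fixing everything else); since each middle centre $T_i$ is disjoint from $T_1 \cup T_s$ it is fixed setwise by $\hat{\rho}$, which is exactly what gives membership preservation, and one checks directly that $\hat{\rho}$ carries $\cI_1 \setminus \cI_s$ onto $\cI_s \setminus \cI_1$. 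When instead the stars are disjoint, we simply have $\cI_1 \setminus \cI_s = \cI_1$ and $\cI_s \setminus \cI_1 = \cI_s$, whose members all avoid the middle stars, so any size-preserving bijection suffices.

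With $\sigma$ in hand, I would compute the ratio pair by pair. Writing $j = \card{\{ i \in [2,s-1] : F \in \cI_i \}}$ for a pair $(F, \sigma(F))$, the set $F \in \cI_1 \setminus \cI_s$ contributes a factor $\frac{2 + 3j}{4 + 3j}$ (its palette drops from size $4 + 3j$ to $2 + 3j$), while $\sigma(F)$ contributes $\frac{2 + 3j}{3j}$ when $j \ge 1$ (it acquires the extra palette $C_s'$) or the fresh factor $2$ when $j = 0$ (it is a genuinely new member of $\t\cF$). In either case the product of the two factors is at least one, since $(2 + 3j)^2 - (4 + 3j)(3j) = 4 > 0$ for $j \ge 1$ and $\tfrac12 \cdot 2 = 1$ for $j = 0$. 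Every remaining set contributes exactly one: those in $\cI_1 \cap \cI_s$ keep the unchanged palette $C_1' \cup C_s' = C_1$, and those in neither $\cI_1$ nor $\cI_s$ are untouched by the split. Multiplying over all sets then yields $\card{\t\Phi(\vec{C'})} \ge \card{\Phi(\vec{C})}$.

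I expect the main obstacle to be the construction of the membership-preserving bijection $\sigma$: a naive element-by-element swap breaks down when two centres overlap, and it is precisely the centre-intersection hypothesis that rescues the argument by splitting the problem into the two clean regimes (disjoint families, or disjoint centres). Once $\sigma$ is available, the rest rests on the elementary inequality $(2 + 3j)^2 \ge (4 + 3j)(3j)$, which simply encodes the fact that breaking a palette of size four into two palettes of size two exactly conserves the colour count once the additional star is introduced.
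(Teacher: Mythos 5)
Your argument is correct and follows essentially the same route as the paper: the same dichotomy extracted from the centre hypothesis (disjoint stars when $k \le 2t-1$, disjoint centres when $k \ge 2t$), the same element-swapping bijection between $\cI_1 \setminus \cI_s$ and $\cI_s \setminus \cI_1$ preserving membership in the middle stars, and the same pairwise accounting. The only difference is presentational: the paper assembles an explicit injection $\Phi(\vec{C}) \hookrightarrow \t\Phi(\vec{C'})$ from local injections on palettes, whereas you compare the exact product formulas $\prod_{F} \card{A(F)}$ directly, and your inequality $(2+3j)^2 \ge (4+3j)\cdot 3j$ is precisely the cardinality count that makes the paper's local injections possible.
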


\begin{proof}
Fix some arbitrary bijection $f : C_1 \rightarrow C_1' \times C_s'$ with $|C_1'|=|C_s'|=2$.  For a set $F$, let $A(F)$ and $\t{A}(F)$ denote the set of available colours under $\Phi(\vec{C})$ and $\t\Phi(\vec{C'})$ respectively.  Observe that if $k \le 2t -1$, then $\card{T_i \cap T_j} \le 2t - k - 1$ implies that the $t$-stars are pairwise-disjoint.  On the other hand, if $k \ge 2t$, then the centres of the $t$-stars are pairwise-disjoint.

Write $T_1 = \{ x_1, \hdots, x_t \}$ and $T_s = \{ y_1, \hdots, y_t \}$, and as before define the bijection $\tau : \cI_1 \setminus \cI_s \rightarrow \cI_s \setminus \cI_1$ by setting $\tau(F) = F \cup \{ y_i : y_i \notin F \} \setminus \{ x_i : y_i \notin F \}$.  As either the stars or their centres are pairwise-disjoint, replacing elements in $T_1$ with elements in $T_s$ cannot affect membership in other $t$-stars, and hence:
\begin{itemize}
	\item[(i)] for all $F \in \cI_1 \setminus \cI_s$ and $2 \le i \le s-1$, $\tau(F) \in \cI_i$ if and only if $F \in \cI_i$.
\end{itemize}

We now partition our families.  Let $\cG = \cI_2 \cup \hdots \cup \cI_{s-1}$, and set $\cA = (\cI_1 \cap \cI_s) \cup ( \cG \setminus (\cI_1 \cup \cI_s))$.  Let $\cB = (\cI_1 \setminus \cI_s) \setminus \cG$, and set $\t\cB = \tau(\cB)$.  By Property (i), $\t\cB = (\cI_s \setminus \cI_1) \setminus \cG$.  Finally, let $\cC = (\cI_1 \setminus \cI_s) \cap \cG$, and set $\t\cC = \tau(\cC) = (\cI_s \setminus \cI_1) \cap \cG$.  We then have $\cF = \cA \sqcup \cB \sqcup \cC \sqcup \t\cC$, while $\t\cF = \cA \sqcup \cB \sqcup \t\cB \sqcup \cC \sqcup \t\cC$.

We now exhibit an injection $\Phi(\vec{C}) \hookrightarrow \t\Phi(\vec{C'})$.  Given $\vphi \in \Phi(\vec{C})$, an $(r,t)$-colouring of $\cF$ corresponding to $\vec{C}$, we shall injectively build a corresponding $\t\vphi \in \t\Phi(\vec{C'})$.  First consider a set $F \in \cA$.  Since $A(F) = \t{A}(F)$, we may set $\t\vphi(F) = \vphi(F)$.

Next consider the sets in $\cB \sqcup \t\cB$, which naturally come in pairs $\{ F, \t{F} \}$, where $F \in \cI_1$ and $\t{F} = \tau(F) \in \cI_s$.  Thus $A(F) = C_1$, while $\t{A}(F) = C_1'$ and $\t{A}(\t{F}) = C_s'$.  We may therefore set $(\t\vphi(F), \t\vphi(\t{F})) = f(\vphi(F))$.

Finally, we have the sets in $\cC \sqcup \t\cC$, which again can be paired up as $\{F, \t{F} \}$, where $F \in \cC$ and $\t{F} = \tau(F) \in \t\cC$.  Let $I = \{ 2 \le i \le s-1 : F \in \cI_i \}$ and $S = \cup_{i \in I} C_i$.  Using Property (i), we then have $A(F) = S \cup C_1$ and $A(\t{F}) = S$, while $\t{A}(F) = S \cup C_1'$ and $\t{A}(\t{F}) = S \cup C_s'$.  If $\vphi(F) \in C_s' \subset C_1$, set $(\t\vphi(F), \t\vphi(\t{F})) = (\vphi(\t{F}), \vphi(F))$.  Otherwise $\vphi(F) \in \t{A}(F)$, and hence we may set $(\t\vphi(F), \t\vphi(\t{F})) = (\vphi(F), \vphi(\t{F}))$.

As $\t\vphi$ is defined injectively on each part of $\t\cF$, it follows that we have the desired injection from $\Phi(\vec{C})$ into $\t\Phi(\vec{C'})$, thus proving the lemma.
\end{proof}

With these lemmas in place, Proposition~\ref{prop:rcolsets} follows easily.

\begin{proof}[Proof of Proposition~\ref{prop:rcolsets}]
By Lemma~\ref{lem:setsdelta}, $\Delta$ can be made arbitrarily large by taking $n \ge C r^2 k (k-t)$ for some suitably large $C$.  By Theorem~\ref{thm:multicol}, this implies that any optimal family must be the union of $s$ $t$-stars, where $s = \ceil{r/3}$, or perhaps $\floor{r/3}$ if $r \equiv 1 \pmod 3$.  By Corollary~\ref{cor:asymcount}, for any such family $\cF$, if $c(\cF)$ is the number of $(r,t)$-colourings of $\cF$, $c(\cF) = (1 + O(2^{-\Delta})) \sum_{\vec{C} \in \fC} \card{\Phi(\vec{C})}$.  We choose $C$ large enough such that
\[ \frac{19}{20} \sum_{\vec{C} \in \fC} \card{\Phi(\vec{C})} \le c(\cF) \le \frac{21}{20} \sum_{\vec{C} \in \fC} \card{\Phi(\vec{C})}. \]

Now consider an optimal family $\cF = \cI_1 \cup \hdots \cup \cI_s$, where each $\cI_i$ is a $t$-star with centre $T_i$.  Suppose for some $i < j$ we have $\card{T_i \cap T_j} \ge \max(1,2t - k)$, and let $\t\cF$ be the family obtained by replacing $\cI_j$ with a $t$-star $\t\cI_j$ whose centre is pairwise-disjoint from all the others.  Let $\fC' \subseteq \fC$ be the set of partitions of colours where $\card{C_i} \ge 3$ or $\card{C_j} \ge 3$.  Since there can be at most two parts of only two colours, in which case there must be at least three parts in total, we have $\card{\fC'} \ge \frac23 \card{\fC}$.

By Lemma~\ref{lem:setsstarswap}, for every $\vec{C} \in \fC$, we have $\card{\t\Phi(\vec{C})} \ge \card{\Phi(\vec{C})}$, and $\card{\t\Phi(\vec{C})} \ge \frac65 \card{\Phi(\vec{C})}$ for all $\vec{C} \in \fC'$.  Let $\phi_0 = \max_{\vec{C} \in \fC} \card{\Phi(\vec{C})}$ be the largest number of $(r,t)$-colourings of $\cF$ corresponding to a single partition of colours, and let $\phi_1 = \frac{1}{\card{\fC'}} \sum_{\vec{C} \in \fC'} \card{\Phi(\vec{C})}$ be the average number of $(r,t)$-colourings of $\cF$ coming from partitions in $\fC'$.

We consider two cases.  First suppose $\phi_1 \ge \frac56 \phi_0$.  In this case, the total number of $(r,t)$-colourings of $\cF$ can be bounded by $c(\cF) \le \frac{21}{20} \sum_{\vec{C} \in \fC} \card{\Phi(\vec{C})} \le \frac{21}{20} \card{\fC} \phi_0$.  On the other hand, we have
\begin{align*}
c(\t\cF) &\ge \frac{19}{20} \sum_{\vec{C} \in \fC} \card{\t\Phi(\vec{C})} \ge \frac{19}{20} \left( \sum_{\vec{C} \in \fC} \card{\Phi(\vec{C})} + \frac15 \sum_{\vec{C} \in \fC'} \card{\Phi(\vec{C})} \right) \\
	&\ge \frac{19}{20} \left( \frac{20}{21} c(\cF) + \frac15 \card{\fC'} \phi_1 \right) \ge \frac{19}{21} c(\cF) + \frac{19}{100} \left( \frac23 \card{\fC} \right) \left( \frac56 \phi_0 \right) \\
	&\ge \frac{19}{21} c(\cF) + \frac{19}{180} \left( \frac{20}{21} c(\cF) \right) = \frac{190}{189} c(\cF) > c(\cF),
\end{align*}
which contradicts the optimality of $\cF$.  

Hence we must have $\phi_1 < \frac56 \phi_0$.  In this case, we have the bound
\begin{align*}
	c(\cF) &\le \frac{21}{20} \sum_{\vec{C} \in \fC} \card{\Phi(\vec{C})} = \frac{21}{20} \left( \sum_{\vec{C} \notin \fC'} \card{\Phi(\vec{C})} + \sum_{\vec{C} \in \fC'} \card{\Phi(\vec{C})} \right) \\ 	&\le \frac{21}{20} \left( \left( \card{\fC} - \card{\fC'} \right) \phi_0 + \card{\fC'} \phi_1 \right) = \frac{21}{20} \left( \card{\fC} \phi_0 - \card{\fC'} (\phi_0 - \phi_1) \right) \\
	&\le \frac{21}{20} \left( \card{\fC} \phi_0 - \left( \frac23 \card{\fC} \right) \left( \frac16 \phi_0 \right) \right) = \frac{14}{15} \card{\fC} \phi_0.
\end{align*}

Now let $\hat{\cF}$ be the family obtained by successively replacing the $t$-stars $\cI_i$ until their centres obey $\card{T_i \cap T_j} \le \max(0, 2t - k - 1)$ for all $1 \le i < j \le s$.  Since the $t$-stars of $\hat{\cF}$ are pairwise-disjoint when $k \le 2t-2$ and have pairwise-disjoint centres when $k \ge 2t -1$, the symmetry of $\hat{\cF}$ implies that $\hat{\Phi}(\vec{C})$ is the same for every $\vec{C} \in \fC$ (here $\hat{\Phi}(\vec{C})$ denotes the number of $(r,t)$-colourings of $\hat{\cF}$ corresponding to the partition $\vec{C}$).  Moreover, by Lemma~\ref{lem:setsstarswap}, we know $\card{\hat{\Phi}(\vec{C})} \ge \card{\Phi(\vec{C})}$ for every $\vec{C} \in \fC$, and thus $\card{\hat{\Phi}(\vec{C})} \ge \phi_0$ for every $\vec{C} \in \fC$.  Thus 
\[ c(\hat{\cF}) \ge \frac{19}{20} \sum_{\vec{C} \in \fC} \card{\hat{\Phi}(\vec{C})} \ge \frac{19}{20} \card{\fC} \phi_0 > c(\cF), \]
again contradicting the optimality of $\cF$.

Thus if $\cF$ is optimal, its $s$ $t$-stars should be pairwise-disjoint when $k \le 2t- 2$, and should have pairwise-disjoint centres when $k \ge 2t -1$.  When $r \equiv 1 \pmod 3$, though, it remains to determine what $s$ should be.  Let $\cF$ be a family with $\floor{r/3}$ $t$-stars, and let $\t\cF$ be a family with $\ceil{r/3}$ $t$-stars.  By the symmetries of $\cF$ and $\t\cF$, every good partition of the colours gives rise to the same number of $(r,t)$-colourings for each family, say $\phi$ and $\t\phi$ respectively.  By Lemma~\ref{lem:sets4v22}, we know $\t\phi \ge \phi$.  Hence, if $\fC$ is the set of partitions of $r$ colours into $\floor{r/3}$ parts, and $\t\fC$ the set of partitions into $\ceil{r/3}$ parts, we have $c(\cF) \le \frac{21}{20} \card{\fC} \phi$, while $c(\t\cF) \ge \frac{19}{20} \card{\t\fC} \t\phi \ge \frac{19}{20} \card{\t\fC} \phi$.  Finally, setting $s = \floor{r/3}$, observe that $\card{\fC} = \binom{s}{1} \frac{r!}{(3!)^{s-1} 4!}$, while $\card{\t\fC} = \binom{s+1}{2} \frac{r!}{(3!)^{s-1} (2!)^2}$, and so $\card{\t\fC} = 3(s+1) \card{\fC}$.  Thus $c(\t\cF) > c(\cF)$, showing that a family of $\floor{r/3}$ $t$-stars cannot be optimal.

In summary, we find that an optimal family must consist of $\ceil{r/3}$ $t$-stars, with the centres of the stars having pairwise-intersections of size at most $\max(0,2t - k - 1)$.  If $k \ge 2t - 1$, this implies the centres are pairwise-disjoint, which determines the family uniquely up to isomorphism, giving the characterisation of (i) in the statement of Proposition~\ref{prop:rcolsets}.  If $k \le 2t - 2$, this restriction forces the $t$-stars themselves to be pairwise disjoint, as claimed in part (ii) of the proposition.
\end{proof}

When $k \le 2t - 2$, the condition that the $t$-stars be pairwise disjoint does not determine the families uniquely, as there is still some choice in how to distribute the centres.  It does, however, give the asymptotic number of $(r,t)$-colourings, since every good partition will give rise to exactly $\OPT(r)^{\binom{n-t}{k-t}}$ typical colourings, and thus by Corollary~\ref{cor:asymcount}, the total number of $(r,t)$-colourings is $(1 + o(1)) \card{\fC} \OPT(r)^{\binom{n-t}{k-t}}$ as $\Delta \rightarrow \infty$.  The choice of centres will affect the number of atypical colourings, and thus determine the families that are exactly optimal.  Hoppen, Kohayakawa and Lefmann~\cite{HKL12} conjectured that the optimal families should have centres whose pairwise intersections are all of size $2t - k - 1$.

\subsection{Vector spaces} \label{subsec:multicolvectors}

We now return to the vector space setting with the aim of determining which families of vector spaces maximise the number of $(r,t)$-colourings when there are more than three colours available.  Hoppen, Lefmann and Odermann~\cite{HLO16} resolved the problem for $r = 4$, showing that the optimal families are the unions of two $t$-stars whose centres intersect in a $(t-1)$-dimensional subspace.

Drawing parallels to the set family results of Hoppen, Kohayakawa and Lefmann~\cite{HKL12}, they made the following conjecture for $r \ge 5$, which we have rephrased to match our notation.

\begin{conj}[Hoppen--Lefmann--Odermann~\cite{HLO16}] \label{conj:HLOvectors} 
Let $1 \le t < k$ and $r \ge 5$ be fixed integers, $q$ a fixed prime power, and $n$ sufficiently large, and consider $(r,t)$-colourings of $k$-dimensional subspaces of $\Fqn$.
\begin{itemize}
	\item[(i)] If $k \le 2t - 2$, a union of $\ceil{r / 3}$ pairwise-disjoint $t$-stars is asymptotically optimal.
	\item[(ii)] If $k \ge 2t-1$, the number of $(r,t)$-colourings is maximised by a union of $\ceil{r/3}$ $t$-stars whose centres pairwise intersect only in $\vec{0}$.
\end{itemize}
\end{conj}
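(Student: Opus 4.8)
The plan is to follow the template used for set families in Proposition~\ref{prop:rcolsets}: first reduce the problem, via Theorem~\ref{thm:multicol} and Corollary~\ref{cor:asymcount}, to a clean optimisation over the centres of the $t$-stars, and then resolve that optimisation geometrically. The first step is a vector-space analogue of Lemma~\ref{lem:setsdelta}: substituting $\maxfam = \qbinom{n-t}{k-t}$ (Frankl--Wilson), $\twomax = \qbinom{n-t-1}{k-t-1}$ (two distinct $t$-stars fix a subspace of dimension at least $t+1$), Ellis's stability bound for $\hmsize$, and $\nummax = \qbinom{n}{k}^{\binom{2(k-t)+1}{k-t}}$ into~\eqref{defn:multicol}, and using the crude estimate~\eqref{ineq:qbounds}, one checks $\Delta \to \infty$ as $n \to \infty$ for fixed $r,k,t,q$. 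Since $r = 3s \equiv 0 \pmod 3$, Lemma~\ref{lem:opt} forces every optimal partition to have all parts of size $3$, so Theorem~\ref{thm:multicol} shows an asymptotically optimal family must be a union $\cF = \cI_1 \cup \hdots \cup \cI_s$ of exactly $s$ distinct $t$-stars (with no $\floor{r/3}$ alternative to worry about), and Corollary~\ref{cor:asymcount} pins down its colouring count to within a $(1+o(1))$ factor.

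The crucial simplification is that, because every part $C_i$ has size $3$, the number of colours available to a subspace $V \in \cF$ equals $3 d(V)$, where $d(V) = \card{\{i : T_i \subseteq V\}}$ counts the centres it contains, independently of the partition. Hence $\card{\Phi(\vec{C})} = \prod_{V \in \cF} 3 d(V)$ is the \emph{same} for every $\vec{C} \in \fC$, and by Corollary~\ref{cor:asymcount} the number of $(r,t)$-colourings of $\cF$ is $(1 + o(1)) \card{\fC} \prod_{V \in \cF} 3 d(V)$ with $\card{\fC} = (3s)!/6^s$ constant. The entire problem thus reduces to maximising $\Phi(\cF) = \prod_{V \in \cF} 3 d(V)$ over configurations of the $s$ centres. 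Writing $\lg \Phi(\cF) = (\lg 3) \card{\cF} + \sum_{V} \lg d(V)$ and expanding $\card{\cF}$ by inclusion--exclusion, the dominant correction comes from the pairwise overlaps $\card{\cI_i \cap \cI_j} = \qbinom{n - \dim(T_i + T_j)}{k - \dim(T_i + T_j)}$, which shrink as $\dim(T_i \cap T_j)$ decreases.

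This immediately settles parts (i) and (ii). When $k \le 2t-1$ the overlap vanishes exactly when the stars are pairwise disjoint, i.e.\ $\dim(T_i \cap T_j) \le 2t-k-1$; every such configuration has $d(V) = 1$ for all $V$ and hence $\Phi(\cF) = 3^{s \maxfam}$, while any non-disjoint union of $s$ $t$-stars loses at least a factor $\tfrac{2}{3}$ (even a single shared subspace replaces a factor $3$ by $2$), giving the ``if and only if'' of (i). When $k \ge 2t$ the stars always meet, their overlap is minimised precisely when $T_i \cap T_j = \{\vec{0}\}$, and any pair with $\dim(T_i \cap T_j) \ge 1$ inflates $\card{\cI_i \cap \cI_j}$ by a factor $q^{\Omega(n)}$, costing $\Phi(\cF)$ a factor $3^{q^{\Omega(n)}}$ that swamps both the $(1+o(1))$ and the constant $\card{\fC}$; hence every asymptotically optimal family has $T_i \cap T_j = \{\vec{0}\}$, which is (ii).

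The hard part is (iii), where all pairwise overlaps already equal $\qbinom{n-2t}{k-2t}$, so the leading term no longer distinguishes configurations and the ordering is decided by the \emph{second}-order terms: the triple and higher overlaps, which enter $\card{\cF}$ with alternating signs, together with the factor $\prod_V d(V)$. I would compute these directly for the candidate where all centres lie in a common $(2t)$-dimensional $W$: there any subspace containing two centres contains $W$ and hence all centres, so $d(V) \in \{1,s\}$, every intersection of $\ge 2$ stars equals $\qbinom{n-2t}{k-2t}$, and one gets $\card{\cF} = s\maxfam - (s-1)\qbinom{n-2t}{k-2t}$ and $\prod_V d(V) = s^{\qbinom{n-2t}{k-2t}}$, so the coefficient of $\qbinom{n-2t}{k-2t}$ in $\lg \Phi(\cF)$ is $\lg s - (s-1)\lg 3$. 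Comparing this against any other $\{\vec{0}\}$-configuration, where some triple spans more than $2t$ dimensions, its higher overlaps are smaller and the coefficient drops to at most $-\binom{s}{2}\lg\tfrac{3}{2}$, shows the common-$W$ family strictly dominates for $s \ge 3$ (and ties trivially for $s=2$). The two genuine obstacles are (a) carrying out this inclusion--exclusion comparison uniformly over \emph{all} intermediate $\{\vec{0}\}$-configurations, not just the two extremes, which I would package as a vector-space ``star-swapping'' lemma in the spirit of Lemma~\ref{lem:setsstarswap} asserting that merging a centre into $W$ never decreases $\Phi(\cF)$; and (b) existence of the optimiser, since $s$ pairwise trivially-intersecting $t$-subspaces of a $2t$-space form a partial $t$-spread and so exist only when $s \le q^t+1$ --- a condition guaranteed exactly by the hypothesis $q \ge s-1$.
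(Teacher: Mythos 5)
Your reduction via Lemma~\ref{lem:deltavs}, Theorem~\ref{thm:multicol} and Corollary~\ref{cor:asymcount}, the identity $\card{\Phi(\vec{C})} = \prod_{V} 3m(V) = 3^{s\qbinom{n-t}{k-t}}\prod_V m(V)3^{1-m(V)}$, and the arguments for parts (i) and (ii) coincide with the paper's proof of Proposition~\ref{prop:multicolvs} and are correct; in particular they already establish the two assertions of the conjecture as stated (for $r=3s$), since (ii) of the conjecture is an existence claim that follows from the necessary condition $T_i\cap T_j=\{\vec{0}\}$ together with the finiteness of the set of candidate families.

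The gap is in your part (iii), the comparison among configurations with pairwise trivially intersecting centres. Your assertion that for every configuration other than the common-$W$ one ``the coefficient drops to at most $-\binom{s}{2}\lg\tfrac32$'' is false. Writing $U_1,\dots,U_\ell$ for the distinct $(2t)$-dimensional spans of pairs of centres and $m_j$ for the number of centres inside $U_j$, the relevant coefficient of $\qbinom{n-2t}{k-2t}$ is $(1-o(1))\sum_j(\log_3 m_j + 1 - m_j)$ subject to $\sum_j\binom{m_j}{2}=\binom{s}{2}$, and intermediate configurations beat the fully generic one: for $s=4$, $t=1$, taking three lines in a common plane $U$ and a fourth line outside it gives coefficient $-1+3(\log_3 2-1)\approx -2.11$, strictly larger than $-\binom{4}{2}\log_3\tfrac32\approx -2.21$. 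So comparing only the two extreme configurations does not close the argument, and the uniform comparison you defer to an unproved ``star-swapping'' lemma is precisely the missing step. (It is not obviously recoverable by swapping, since moving one centre into $W$ changes several $U_j$ and their multiplicities simultaneously.) The paper closes this gap with Claim~\ref{clm:vsopt}: $\sum_j(m_j-1-\log_3 m_j)\ge s-1-\log_3 s$ with equality iff $\ell=1$ and $m_1=s$, proved by showing $g(x)=(x-1-\log_3 x)/\binom{x}{2}$ is decreasing on $[2,s]$ and summing $\binom{m_j}{2}g(m_j)\ge\binom{m_j}{2}g(s)$. You would also need the paper's observation that at most $\binom{\ell}{2}\qbinom{n-2t-1}{k-2t-1}=o\bigl(\qbinom{n-2t}{k-2t}\bigr)$ subspaces contain two of the $U_j$, so that the subspaces attributed to $U_j$ genuinely satisfy $m(V)=m_j$. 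The remaining ingredients, including the greedy construction of the centres inside $W$ under $q\ge s-1$, match the paper.
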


However, the situation is rather more delicate than for set families, as when $k \ge 2t$, having stars with trivially-intersecting centres does not determine the family up to isomorphism.  In fact, it does not even determine the number of $(r,t)$-colourings asymptotically, and so one must further specify which $t$-dimensional subspaces to use as the centres of the stars.  One might expect the vector space analogue of disjoint sets to be linearly independent subspaces, but this is surprisingly not the case.

In what follows we will apply the results of Section~\ref{subsec:multicolgen} to determine which families of vector spaces asymptotically maximise the number of $(r,t)$-colourings.  In light of the difficulties mentioned above, we shall simplify matters by only considering the case when $r$ is divisible by three, so as to have a cleaner solution to $\OPT(r)$.  Recall also that a key ingredient of our method is the bound on the size of non-extremal maximal families.  For $t = 1$, the result of Blokhuis et al.~\cite{BBCFMPS10} gives the optimal dependence of $n$ on the other parameters, but for $t \ge 2$ we only have the result of Ellis~\cite{E16}, which requires $n$ to be large.  For the sake of brevity, we shall assume $n$ is large enough in both cases, but with careful computation one could obtain an effective bound on $n$ when $t = 1$.  We recall our main result below.

\multicolvs*

Before we proceed with the proof of this proposition, we shall describe the constructions that give us lower bounds on the maximum number of $(r,t)$-colourings, thus providing us with the necessary conditions in the cases above.  Each of the constructions will be a union of $s$ $t$-stars with centres $T_i$, $1 \le i \le s$, and we lower bound the number of $(r,t)$-colourings by fixing some optimal partition of the colours $\vec{C} \in \fC$ assigning three colours to each star.

For the first construction $\cV_1$, take the centres $T_i$ to be linearly independent.  When $k \le 2t - 1$, note that the corresponding stars are pairwise-disjoint, since the span of any two of the centres $T_i$ and $T_j$ is $(2t)$-dimensional, but our subspaces are only $k$-dimensional.  Hence each subspace will have exactly three colours available for it, which shows that the number of $(r,t)$-colourings is at least
\begin{equation} \label{ineq:vslowbd1}
	c(\cV_1) \ge \card{\Phi(\vec{C})} = 3^{\card{\cV_1}} = 3^{s \qbinom{n-t}{k-t}}.
\end{equation}
If $k \ge 2t$, then these stars are no longer disjoint.  However, since any subspace in the intersection of two stars must contain the $(2t)$-dimensional subspace their centres span, we have at most $\binom{s}{2} \qbinom{n-2t}{k-2t}$ subspaces that are in multiple stars.  The remaining subspaces have exactly three colours available, giving
\begin{equation} \label{ineq:vslowbd2}
	c(\cV_1) \ge \card{\Phi(\vec{C})} \ge 3^{s \qbinom{n-t}{k-t} - \binom{s}{2} \qbinom{n-2t}{k-2t}}. 
\end{equation}

For our second construction $\cV_2$, we shall fix some $(2t)$-dimensional subspace $W$.  We now require $T_i \le W$ for each $i$, as well as $T_i \cap T_j = \{ \vec{0} \}$ for all $i \neq j$.  When $q \ge s - 1$, such a collection of subspaces can be built greedily, as we now describe.  Suppose we have built subspaces $T_j$, $1 \le j < i$, and a partial subspace $U_i$ of dimension at most $t-1$.  Extend $U_i$ by a new vector in $W$ that does not lie in $\cup_{j < i} \left( T_j + U_i \right)$.  Each of these spans has dimension at most $2t-1$, with $\vec{0}$ in all of them, so there are fewer than $(s-1)q^{2t-1}$ forbidden vectors, but a total of $q^{2t}$ vectors in $W$.  Hence, provided $q \ge s - 1$, this greedy process can run through to completion.\footnote{For an explicit construction of these centres, consider $W$ as the orthogonal sum of $t$ $2$-dimensional spaces $W_{\ell}$, $1 \le \ell \le t$.  For each $\ell$, let $\{ L_i^{(\ell)} : 1 \le i \le s \}$ be a collection of distinct lines in $W_{\ell}$.  We can then take $T_i$ to be the space spanned by $\{ L_i^{(\ell)} : 1 \le \ell \le t \}$.  }  To lower bound the number of colourings of $\cV_2$, note that the subspaces containing the $(2t)$-dimensional space $W$ have all $r$ colours available to them, while the remaining subspaces each have three colours.  Hence
\begin{equation} \label{ineq:vslowbd3}
	c(\cV_2) \ge \card{\Phi(\vec{C})} = 3^{s \left( \qbinom{n-t}{k-t} - \qbinom{n-2t}{k-2t} \right) } r^{ \qbinom{n-2t}{k-2t} } = 3^{s \qbinom{n-t}{k-t} } \left( s 3^{1-s} \right)^{\qbinom{n-2t}{k-2t}}.
\end{equation}

With these lower bounds in place, we shall now show that any family with at least this many colourings must be as described in Proposition~\ref{prop:multicolvs}.  We begin by showing that the quantity $\Delta$ is large, which will allow us to use Theorem~\ref{thm:multicol}.

\begin{lemma} \label{lem:deltavs}
For fixed $k, r, t$ and $q$, the quantity $\Delta \rightarrow \infty$ as $n \rightarrow \infty$, where $\Delta$ is as defined in~\eqref{defn:multicol}.
\end{lemma}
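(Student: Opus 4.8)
The plan is to substitute the values of the parameters $\maxfam$, $\hmsize$, $\twomax$ and $\nummax$ appropriate to the vector space setting into the definition~\eqref{defn:multicol} of $\Delta$, and then observe that, with $k$, $r$, $t$ and $q$ held fixed, the leading term $(\lg 9 - 3)\maxfam$ grows exponentially in $n$ while every term being subtracted is of strictly smaller order. Since $\lg 9 - 3 > 0$, this forces $\Delta \to \infty$. As recorded in Section~\ref{subsec:3colvectors}, the extremal families are the $t$-stars, so $\maxfam = \qbinom{n-t}{k-t}$; two distinct $t$-stars meet only in subspaces containing the $(t+1)$-dimensional span of their centres, giving $\twomax = \qbinom{n-t-1}{k-t-1}$; and the number of maximal $t$-intersecting families satisfies $\nummax = \qbinom{n}{k}^{\binom{2(k-t)+1}{k-t}}$.

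First I would dispose of the two easy subtracted terms. Using the identity $\qbinom{n-t}{k-t} = \frac{q^{n-t}-1}{q^{k-t}-1}\qbinom{n-t-1}{k-t-1}$, the ratio $\twomax/\maxfam = (q^{k-t}-1)/(q^{n-t}-1) \to 0$, so $r\twomax = o(\maxfam)$; since $\lg\OPT(r)$ is a constant for fixed $r$, the term $\tfrac{r\twomax}{3}\lg\OPT(r)$ is also $o(\maxfam)$. For the counting term, the upper bound in~\eqref{ineq:qbounds} gives $\lg\qbinom{n}{k} \le 2 + k(n-k)\lg q = O(n)$, whence $r\lg\nummax = r\binom{2(k-t)+1}{k-t}\lg\qbinom{n}{k} = O(n)$; as $\maxfam \ge q^{(k-t)(n-k)}$ grows exponentially in $n$ (recall $k - t \ge 1$), this is $o(\maxfam)$ as well.

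The crux is the term $r\max(\hmsize,\twomax)$, for which I must verify $\hmsize = o(\maxfam)$, and this is the step I expect to require the most care. When $t \ge 2$, the stability theorem of Ellis~\cite{E16} (valid for $n$ large, as we are assuming) gives $\hmsize = \Theta\!\left(\qbinom{n-t-1}{k-t-1}\right)$ for fixed $k, t, q$, which is $o(\maxfam)$ by the same ratio computation used for $\twomax$. The delicate case is $t = 1$: here Blokhuis et al.~\cite{BBCFMPS10} give $\hmsize = \qbinom{n-1}{k-1} - q^{k(k-1)}\qbinom{n-k-1}{k-1} + q^k$, whose first two terms are each of the same order as $\maxfam = \qbinom{n-1}{k-1}$, so that $\hmsize = o(\maxfam)$ relies on a cancellation. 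I would make this precise by showing $\qbinom{n-k-1}{k-1}/\qbinom{n-1}{k-1} = \prod_{j=0}^{k-2}\frac{q^{n-k-1-j}-1}{q^{n-1-j}-1} \to q^{-k(k-1)}$ as $n \to \infty$, so that $q^{k(k-1)}\qbinom{n-k-1}{k-1}/\qbinom{n-1}{k-1} \to 1$ and hence $\hmsize/\maxfam \to 0$.

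With all three subtracted terms shown to be $o(\maxfam)$, I conclude $\Delta = \left(\lg 9 - 3 - o(1)\right)\maxfam \to \infty$, since $\lg 9 - 3 > 0$ and $\maxfam \to \infty$. In particular $\Delta \ge 2$ for all sufficiently large $n$, which is precisely what is needed to invoke Theorem~\ref{thm:multicol} and Corollary~\ref{cor:asymcount} in the proof of Proposition~\ref{prop:multicolvs}.
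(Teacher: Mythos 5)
Your proof is correct and follows essentially the same route as the paper: substitute the vector-space values of $\maxfam$, $\hmsize$, $\twomax$ and $\nummax$ into~\eqref{defn:multicol} and observe that the positive term $(\lg 9 - 3)\maxfam \ge (\lg 9 - 3)\,q^{(k-t)(n-k)}$ dominates every subtracted term for fixed $k,r,t,q$. The only divergence is your treatment of $t=1$: the paper simply invokes Ellis's bound $\hmsize \le (1+O(q^{-n}))\qbinom{k+1}{1}\qbinom{n-t-1}{k-t-1}$ as valid for all $t$ once $n$ is large, whereas you extract $\hmsize = o(\maxfam)$ from the exact Blokhuis et al.\ formula via the cancellation $q^{k(k-1)}\qbinom{n-k-1}{k-1}\big/\qbinom{n-1}{k-1} \to 1$; both arguments are sound.
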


\begin{proof}
The extremal results of Hsieh~\cite{H75} when $t = 1$ and Frankl and Wilson~\cite{FW86} for $t \ge 2$ show that the largest $t$-intersecting vector spaces are the $t$-stars, hence we may take $\maxfam = \qbinom{n-t}{k-t}$.  Since the intersection of two such stars consists of all vector spaces containing some fixed subspace of dimension at least $t+1$, we have $\twomax = \qbinom{n-t-1}{k-t-1}$.  Regarding the size of the largest non-extremal maximal $t$-intersecting family, the work of Blokhuis et al.~\cite{BBCFMPS10} gives an exact bound when $t = 1$.  However, in our asymptotic setting it will suffice to use the result of Ellis~\cite{E16} that holds for all $t$ provided $n$ is large enough, giving $\hmsize \le \left(1 + O(q^{-n}) \right) \qbinom{k+1}{1} \qbinom{n-t-1}{k-t-1}$.  Finally, as discussed in Section~\ref{subsec:3colvectors}, the result of Balogh et al.~\cite{BDDLS15} gives $\nummax \le \qbinom{n}{k}^{\binom{2(k-t)+1}{k-t}}$.

Substituting these parameters into~\eqref{defn:multicol}, and using our bounds from~\eqref{ineq:qbounds}, we find
\begin{align*}
	\Delta &= \left( \lg 9 - 3 \right) \maxfam - r \max \left( \hmsize, \twomax \right) - \frac{r \twomax}{3} \lg \OPT(r) - r \lg \nummax \\
	&\ge \left( \lg 9 - 3 \right) \qbinom{n-t}{k-t} - \left( \left(1 + O(q^{-n}) \right) r \qbinom{k+1}{1} + s^2 \lg 3 \right) \qbinom{n-t-1}{k-t-1} \\
	&\qquad - r \binom{2(k-t)+1}{k-t} \lg \qbinom{n}{k} \\
	&\ge (\lg 9 - 3) q^{(k-t)(n-k)} - 5 \left( r \qbinom{k+1}{1} + s^2 \lg 3 \right) q^{(k-t-1)(n-k)} - 2^{2(k-t)+1} r k (n-k) \lg q \\
	&= \Omega\left(q^{(k-t)(n-k)} \right) \rightarrow \infty. \qedhere
\end{align*}
\end{proof}

We are now ready to prove our main result of this section.

\begin{proof}[Proof of Proposition~\ref{prop:multicolvs}]
By Lemma~\ref{lem:deltavs}, we have $\Delta \rightarrow \infty$.  In particular, Theorem~\ref{thm:multicol} implies that any asymptotically optimal family $\cV$ must be the union of $s$ $t$-stars with centres $T_i$, $1 \le i \le s$, say.  By Corollary~\ref{cor:asymcount} we also know that the number of $(r,t$)-colourings of any such family is $(1 + o(1)) \sum_{\vec{C} \in \fC} \card{\Phi(\vec{C})}$.  Since $r$ is divisible by three, every partition in $\fC$ assigns exactly three colours to each of the $t$-stars, and hence $\card{\Phi(\vec{C})}$ is independent of $\vec{C}$.  Thus we deduce that the number of $(r,t)$-colourings is $(1 + o(1)) \card{\fC} \card{\Phi(\vec{C})}$, where $\vec{C}$ is any equipartition of the colours over the $t$-stars.

For each $V \in \cV$, let $m(V)$ represent the number of the $t$-stars that contain $V$ or, equivalently, the number of centres $T_i$ contained in $V$.  If $a(V)$ is the number of colours available for the subspace $V$, we then have $a(V) = 3 m(V) \le 3^{m(V)}$, with equality if and only if $m(V) = 1$. Thus 
\begin{equation} \label{eqn:vsnumcolourings} \card{\Phi(\vec{C})} = \prod_{V \in \cV} a(V) = 3^{\sum_{V \in \cV} m(V)} \prod_{V \in \cV} \frac{3m(V)}{3^{m(V)}} = 3^{s \qbinom{n-t}{k-t}} \prod_{V \in \cV} m(V) 3^{1-m(V)},
\end{equation}
where each factor in the final product is at most $1$.

If $k \le 2t-1$, and the $t$-stars are not disjoint, then there is some $V \in \cV$ for which $m(V) \ge 2$, and hence $m(V) 3^{1 - m(V)} \le 2/3$.  This implies that $\card{\Phi(\vec{C})} \le 2 \cdot 3^{s \qbinom{n-t}{k-t} - 1}$.  On the other hand, as shown in~\eqref{ineq:vslowbd1}, if the $t$-stars are disjoint, then we always have $\card{\Phi(\vec{C})} = 3^{ s \qbinom{n-t}{k-t}}$.  Hence it follows that any asymptotically optimal familly must have pairwise-disjoint $t$-stars, and any such family has an asymptotically equal number of $(r,t)$-colourings, completing the proof of (i).

If $k \ge 2t$, suppose we have $\dim(T_1 \cap T_2) \ge 1$.  This implies that the span of $T_1 \cup T_2$ is at most $(2t-1)$-dimensional, and hence there are at least $\qbinom{n-2t+1}{k-2t+1}$ subspaces for which $m(V) \ge 2$.  By~\eqref{eqn:vsnumcolourings} and~\eqref{ineq:qbounds}, we then have 
\[ \card{\Phi(\vec{C})} 3^{-s \qbinom{n-t}{k-t}} \le \left(\frac23 \right)^{\qbinom{n-2t+1}{k-2t+1}} \le \left( \frac23 \right)^{q^{(k-2t+1)(n-k)}}, \]
while~\eqref{ineq:vslowbd2} gives the significantly larger lower bound
\[ \card{\Phi(\vec{C})} 3^{-s \qbinom{n-t}{k-t}} \ge 3^{- \binom{s}{2} \qbinom{n-2t}{k-2t}} \ge 3^{- 2 s^2 q^{(k-2t)(n-k)}}.\]
Hence for every asymptotically optimal family we must have $T_i \cap T_j = \{ \vec{0} \}$ for all $i \neq j$, proving (ii).

Finally, for more precise results, we consider the number of subspaces for which $m(V) \ge 2$.  In order for $V$ to be contained in multiple $t$-stars, it must contain one of the $(2t)$-dimensional spaces spanned by a pair of the centres $T_i$.  Let $\{ U_1, U_2, \hdots, U_{\ell} \}$ be the set of distinct $(2t)$-dimensional subspaces spanned by pairs of centres, where $1 \le \ell \le \binom{s}{2}$.

Note that the span of any pair of spaces $U_j$ is at least $(2t+1)$-dimensional, and hence at most $\binom{\ell}{2} \qbinom{n-2t-1}{k-2t-1}$ subspaces $V \in \cV$ will contain two or more of these spaces.  Hence for each $j$, there are at least $\qbinom{n-2t}{k-2t} - \binom{\ell}{2} \qbinom{n-2t-1}{k-2t-1} = (1 - o(1)) \qbinom{n-2t}{k-2t}$ subspaces $V \in \cV$ that contain $U_j$ but do not contain $U_{j'}$ for any $j' \neq j$.  If $m_j$ is the number of centres $T_i$ contained in $U_j$, these subspaces containing only $U_j$ will have $m(V) = m_j$.

Using~\eqref{eqn:vsnumcolourings} gives
\begin{equation} \label{ineq:vsprecise}
	\card{\Phi(\vec{C})} 3^{-s \qbinom{n-t}{k-t}} \le \prod_{j=1}^{\ell} \left( m_j 3^{1- m_j } \right)^{(1 - o(1))\qbinom{n-2t}{k-2t}}.
\end{equation}

Note that every pair of centres spans exactly one of the subspaces $U_j$, so we must have $\sum_{j=1}^{\ell} \binom{m_j}{2} = \binom{s}{2}$.  The following optimisation result, to be proven later, bounds how large the right-hand side can be in the above inequality.

\begin{claim} \label{clm:vsopt}
Given integers $s \ge 2$, $1 \le \ell \le \binom{s}{2}$, and $2 \le m_j \le s$, $1 \le j \le \ell$, satisfying the constraint $\sum_{j = 1}^{\ell} \binom{m_j}{2} = \binom{s}{2}$, we have $\sum_{j=1}^{\ell} \left( m_j - 1 - \log_3 m_j  \right) \ge s - 1 - \log_3 s$, with equality if and only if $\ell = 1$ and $m_1 = s$.
\end{claim}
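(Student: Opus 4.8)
The plan is to recast the claim as a subadditivity statement for a single real function, and then prove it by a two-block merging argument. Write $f(m) = m - 1 - \log_3 m$ for the per-block cost and $p = \binom{m}{2} = \frac{m(m-1)}{2}$ for the pair count, so the constraint reads $\sum_j p_j = P$ with $P = \binom{s}{2}$, and each $p_j \ge 1$ since $m_j \ge 2$. Inverting $p = \binom{m}{2}$ on $m \ge 2$ via $m(p) = \frac{1 + \sqrt{1+8p}}{2}$, I define $h(p) = f(m(p))$ on $[1,\infty)$, so that the left-hand side of the claim is exactly $\sum_j h(p_j)$ and the right-hand side is $f(s) = h(P)$. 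It thus suffices to prove the subadditivity $\sum_j h(p_j) \ge h\big(\sum_j p_j\big)$, strict unless there is a single block; and since $\binom{m_1}{2} = \binom{s}{2}$ forces $m_1 = s$, the single-block case is precisely the asserted equality case.

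The heart of the argument is the two-variable inequality $h(a) + h(b) > h(a+b)$ for all $a, b \ge 1$, from which the general statement follows by induction, merging two parts at a time (the merged value $a+b \ge 2$ stays in range). To prove it I would first show $h$ is concave on $[1,\infty)$. Differentiating through the substitution gives $h'(p) = \phi(m)$ with $\phi(m) = \frac{m - c}{m(m - 1/2)}$, where $c = 1/\ln 3 < 1$, and $h''(p)$ has the same sign as $\phi'(m) = \frac{-(m^2 - 2cm + c/2)}{(m^2 - m/2)^2}$; since $m^2 - 2cm + c/2 > m(m - 2c) \ge 2(2 - 2c) > 0$ for $m \ge 2$ (using $2c < 2$), we get $h'' < 0$. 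Writing $a$ as the convex combination $a = \frac{a-1}{a+b-1}(a+b) + \frac{b}{a+b-1}\cdot 1$ and $b$ analogously, concavity on $[1,a+b]$ gives
\[ h(a) + h(b) \ge \frac{a+b-2}{a+b-1}\,h(a+b) + \frac{a+b}{a+b-1}\,h(1), \]
and a short rearrangement shows the right-hand side exceeds $h(a+b)$ precisely when $(a+b)\,h(1) > h(a+b)$.

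It therefore remains to verify the auxiliary inequality $\psi(P) := P\,h(1) - h(P) > 0$ for $P > 1$. Here $\psi(1) = 0$, and $\psi'(P) = h(1) - h'(P)$ is non-decreasing because $h'$ is non-increasing by concavity, so it is enough that $\psi'(1) \ge 0$, that is $h(1) \ge h'(1)$. Since $h(1) = f(2) = 1 - \log_3 2$ and $h'(1) = \phi(2) = \frac{2-c}{3}$, this reduces to $\ln 3 + 1 \ge 3\ln 2$, equivalently $3e \ge 8$, which holds strictly; hence $\psi > 0$ on $(1,\infty)$, giving the strict two-block inequality and, by induction, the claim together with its equality case $\ell = 1$, $m_1 = s$. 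I expect this last, rather tight, numerical reduction to $3e \ge 8$ — together with the care needed to track strictness throughout so as to pin down the equality case — to be the main obstacle; the concavity computation and the induction are routine by comparison.
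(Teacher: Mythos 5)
Your proposal is correct, and it takes a genuinely different route from the paper. The paper works directly with the ratio $g(x) = \bigl(x - 1 - \log_3 x\bigr)/\binom{x}{2}$, checks by differentiation that $g$ is decreasing on $[2,\infty)$, and then concludes in one line: $\sum_j f(m_j) = \sum_j \binom{m_j}{2} g(m_j) \ge g(s) \sum_j \binom{m_j}{2} = f(s)$, with the equality analysis falling out of strict monotonicity. You instead change variables to the pair count $p = \binom{m}{2}$, prove concavity of $h(p) = f(m(p))$ on $[1,\infty)$, and deduce two-block subadditivity from concavity together with the boundary inequality $(a+b)\,h(1) > h(a+b)$, finishing by merging blocks inductively. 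It is worth observing that your auxiliary inequality $P\,h(1) > h(P)$ is exactly the statement $h(P)/P < h(1)/1$, i.e.\ the special case $g(m(P)) < g(2)$ of the monotonicity the paper proves in full; so the two arguments are cousins, and each ends in a small numerical verification (the paper's sign check on $g'$, your reduction to $3e \ge 8$). Your computations all check out: $h'(p) = \frac{m-c}{m(m-1/2)}$ with $c = 1/\ln 3$, the numerator of $\phi'$ is $-(m^2 - 2cm + c/2)$, which is negative for $m \ge 2$ since $c < 1$, and $h(1) \ge h'(1)$ does reduce to $1 + \ln 3 \ge 3\ln 2$. What your approach buys is a cleaner handle on strictness (it comes entirely from the strict inequality $\psi(P) > 0$ for $P > 1$, so even the degenerate convex combinations at $a=1$ cause no trouble) and a more explicit concavity verification than the paper's ``it can be checked''; what it costs is length, since the paper's ratio argument avoids both the induction and the boundary lemma.
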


Now suppose we have at least two distinct $(2t)$-dimensional subspaces $U_j$.  By~\eqref{ineq:vsprecise}, we must have $\log_3 \left( \card{ \Phi(\vec{C})} 3^{ - s \qbinom{n-t}{k-t}} \right) \le (1 - o(1)) \qbinom{n-2t}{k-2t} \sum_{j=1}^{\ell} (\log_3 m_j + 1 - m_j )$.  Since $\ell \ge 2$, Claim~\ref{clm:vsopt} implies this is strictly smaller than $\qbinom{n-2t}{k-2t} \left( \log_3 s - 1 - s \right)$.  Comparing this to the lower bound from~\eqref{ineq:vslowbd3}, valid whenever $q \ge s-1$, we find that $\cV$ cannot be asymptotically optimal.

Hence any asymptotically optimal family $\cV$ must be the union of $s$ $t$-stars with centres $T_i$, where $T_i \cap T_j = \{ \vec{0} \}$ for all $i \neq j$, and all the centres lie in a single $(2t)$-dimensional space.  In this case the calculation in~\eqref{ineq:vslowbd3} holds, which shows that all such constructions have asymptotically the same number of $(r,t)$-colourings, and hence are asymptotically optimal, completing the proof.
\end{proof}

We complete this section by proving Claim~\ref{clm:vsopt}.

\begin{proof}[Proof of Claim~\ref{clm:vsopt}]
Consider the function
\[ g(x) = \frac{x - 1 - \log_3 x}{\binom{x}{2}} = \frac{2}{x} - \frac{2 \log_3 x}{x (x-1)}, \]
for which we have 
\[ g'(x) = \frac{2(2x-1) \log_3 x}{x^2 (x-1)^2} - \frac{2}{x^2} - \frac{2}{x^2 (x-1) \ln 3}. \]

It can be checked that $g'(x) < 0$ for all $x \ge 2$, and hence for $2 \le x \le s$,
\[ g(x) \ge  g(s) = \frac{s - 1 - \log_3 s}{\binom{s}{2}}, \]
with equality if and only if $x = s$.

Hence we have
\[ \sum_{j=1}^{\ell} \left( m_j - 1 - \log_3 m_j \right) = \sum_{j=1}^{\ell} \binom{m_j}{2} g(m_j) \ge g(s) \sum_{j=1}^{\ell} \binom{m_j}{2} = g(s) \binom{s}{2} = s - 1 - \log_3 s,  \]
with equality if and only if $m_j = s$ for all $j$.  This in turn implies $\ell = 1$, as required.
\end{proof}


\section{Concluding remarks} \label{sec:conc}

In this paper we have studied the Erd\H{o}s--Rothschild problem for intersecting families, combining extremal and stability results with strong bounds on the number of maximal intersecting families.  This approach allowed us to unify and improve previous results in this direction, often obtaining sharp dependencies on the size of the ground set with respect to the other parameters.

Theorem~\ref{thm:3col} provides a simple sufficient condition that ensures the extremal intersecting families are precisely those with the maximum number of $(3,t)$-colourings.  We then showed that this condition can be verified in the context of permutations, vector spaces and set families, leading to three-coloured Erd\H{o}s--Rothschild results for these problems.  The same method can give results in other settings as well; for instance, one can bound the number of three-colourings of set families without large monochromatic matchings, but we omitted the similar derivation for the sake of (relative) brevity.

The same approach can be used for larger numbers of colours, resulting in Theorem~\ref{thm:multicol}, a stability result stating that the optimal families are unions of extremal families.  For set families, this shows that the results of Hoppen, Kohayakawa and Lefmann~\cite{HKL12} continue to hold when the size of the ground set is quadratic in the uniformity of the family.  For families of vector spaces, we were able to characterise those families with an asymptotically optimal number of $(r,t)$-colourings when $r$ is divisible by three, thus partially resolving a conjecture of Hoppen, Lefmann and Odermann~\cite{HLO16}.

We now highlight some possible avenues for further investigation.

\paragraph{Sharper bounds for three colours}

One of the advantages of our method is the sharp bounds on the relative sizes of the parameters.  For instance, with $t$-intersecting $k$-uniform set families over $[n]$, where $3 \le t \le k-3$, we could show that the $t$-stars are the only optimal families as soon as we exceed the Erd\H{o}s--Ko--Rado threshold; that is, once $n \ge (t+1)(k-t+1) + 1$.

However, the most natural case is undoubtedly when $t = 1$, for which we require $n \ge 3k + 10 \ln k$.  The same result, though, could be true for $n$ as small as $2k + 1$.  To obtain results when $n \approx 2k$ may require more careful study of large maximal intersecting families, or the use of other combinatorial results.

Another interesting problem would be to determine the optimal families \emph{below} the Erd\H{o}s--Ko--Rado threshold.  When $n < (t+1)(k-t+1)$, is the number of $(3,t)$-colourings still maximised by the largest $t$-intersecting families, or do we observe qualitatively different behaviour in this range?

\paragraph{Multicoloured set families of larger uniformity}

Our results are somewhat weaker when the number of colours increases.  Within the context of ($1$-)intersecting set families, we required $n \ge C r^2 k (k-1)$ for some absolute constant $C$.  This quadratic threshold commonly arises when studying intersecting families, as when $k$ is larger, there are non-trivial intersecting families comparable in size to the stars.  However, there is no reason to believe that the result fails to hold: are the stars still optimal when $k$ is, say, linear in $n$?

One bottleneck in our argument was ruling out uniquely-appearing maximal families in the proof of Claim~\ref{clm:multiplicities}, where we used a rather expensive union bound.  Some more careful estimates here could go some way towards obtaining a condition in Theorem~\ref{thm:multicol} that would still hold for set families of larger uniformities.

Moreover, referring to Proposition~\ref{prop:rcolsets}, let us emphasise again that for $k\le 2t-2$, it is still a challenging open problem to give a full characterisation of all optimal families, as has already been asked for by Hoppen, Kohayakawa and Lefmann~\cite{HKL12}.   

\paragraph{Exact results for multicoloured vector spaces}

In the setting of vector spaces, we were able to show that it is asymptotically optimal to take a union of $t$-stars, where the centres of these stars are pairwise trivially intersecting, yet are contained in a fixed $2t$-dimensional subspace.  When $r = 6$, this simply means that we have two linearly independent $t$-spaces, and all such choices are isomorphic.  However, for larger $r$, this in general does not specify the families up to isomorphism, and so it remains to determine which families exactly maximise the number of $(r,t)$-colourings.

When $t = 1$ and $r = 9$, the centres of the stars are three lines in a two-dimensional space.  Since the group of all automorphisms of $\Fq^2$ acts $3$-transitively on the set of lines, and preserves the dimensions of the intersections of subspaces, it follows that any such family has the same number of $(9,1)$-colourings, and we can again describe the optimal families exactly.

However, for larger $t$, or larger $r$, we do not know if families satisfying the condition can in fact have different numbers of $(r,t)$-colourings, nor do we have any conjecture as to which families should be optimal.

\paragraph{A general Erd\H{o}s--Rothschild result}

Ultimately, one would like to have a general solution to the Erd\H{o}s--Rothschild problem: for which extremal problems is it true that the extremal structures maximise the number of two- or three-colourings without monochromatic copies of the forbidden substructure?  The proofs of our general results do not explicitly require that the extremal problem in question concerns intersecting families, but we know no other problem that enjoys both such strong stability results and so few maximal constructions.  For instance, the inequality~\eqref{ineq:3col} would not be satisfied for Mantel's Theorem, even though the Erd\H{o}s--Rothschild extension is known to hold.  Can one develop a stronger sufficient condition for the Erd\H{o}s--Rothschild problem?  On the other hand, it is known that for some extremal problems, the trivial lower bound for the Erd\H{o}s--Rothschild extension is not correct even for two or three colours (see, for instance,~\cite{HKL14}).  It would be of great interest to determine which features of an extremal problem imply that the extremal constructions are also optimal for the Erd\H{o}s--Rothschild problem.

\paragraph{Acknowledgements}  The authors would like to thank D\'aniel Kor\'andi for helpful conversations, of which one led to the formulation of the optimisation problem solved in Claim~\ref{clm:vsopt}.  We are also grateful to the anonymous referees for their suggestions for improving the presentation of our paper.

\end{document}